\documentclass[11pt]{amsart}

\usepackage[T1]{fontenc}
\usepackage{mathrsfs}
\usepackage{amssymb}
\usepackage{amsmath}
\usepackage{amsthm}
\usepackage{amsfonts}
\usepackage{fullpage}
\usepackage{enumerate}
\usepackage{tikz-cd} 
\usepackage{comment}
\usepackage{hyperref}
\usepackage{float}
\usepackage{verbatim}
\usepackage{enumitem}
\usepackage{fancyhdr}
\usepackage{todonotes}
\usepackage{svg}
\usepackage{multirow, url}
\usepackage{textcomp}
\usepackage{tikz-cd}
\usepackage{bbm}
\usepackage{mathtools}

\hypersetup{colorlinks}
\hypersetup{colorlinks=true, linkcolor=red, citecolor=blue, filecolor=blue, urlcolor=blue}

\newtheorem{thm}{Theorem}[section]

\newtheorem{lem}[thm]{Lemma}
\newtheorem{prop}[thm]{Proposition}
\newtheorem*{remark}{Remark}
\theoremstyle{definition}
\newtheorem{defn}[thm]{Definition}

\newenvironment{appendixproof}[1]
{\par\noindent\underline{\textit{Proof of {#1}.}}\\ }
{\hfill$\square$\par
\vspace{3mm}}

\numberwithin{equation}{section} 
\numberwithin{table}{section}

\newcommand{\nc}{\newcommand}
\newcommand{\rc}{\renewcommand}

\nc{\CC}{\mathbb{C}}
\nc{\EE}{\mathbb{E}}
\nc{\FF}{\mathbb{F}}
\nc{\Prob}{\mathbb{P}}
\nc{\QQ}{\mathbb{Q}}
\nc{\RR}{\mathbb{R}}
\nc{\ZZ}{\mathbb{Z}}
\rc{\SS}{\mathbb{S}}
\nc{\TT}{\mathbb{T}}

\nc{\argmax}{\on{argmax}}
\nc{\argmin}{\on{argmin}}
\nc{\ptx}{\mathcal{X}}
\nc{\pty}{\mathcal{Y}}
\nc{\ptz}{\mathcal{Z}}
\nc{\filt}{\mathcal{F}}
\nc{\Cechword}{\v Cech }
\nc{\Cech}{\mathcal{C}}
\nc{\rips}{\mathcal{R}}
\nc{\pd}{\mathsf{PD}}
\nc{\homology}{{\mathsf{H}}}
\nc{\ph}{{\mathsf{PH}}}
\nc{\prob}{{\mathsf{Prob}}}
\nc{\pointprocess}{{\mathcal{P}}}
\nc{\poissondist}{\mathsf{Poisson}}
\nc{\PL}{{\on{PL}}}
\nc{\diam}{\mathcal{L}}
\nc{\thick}{\mathcal{T}}
\nc{\sthick}{\mathcal{S}}
\nc{\diff}{\mathsf{d}}
\nc{\Diff}{\mathsf{D}}
\nc{\dist}{\mathsf{d}}
\nc{\sphere}{\mathbb{S}}
\nc{\frob}{{\mathsf{F}}}
\nc{\subdiv}{{\mathsf{div}}}
\nc{\sing}{{\mathsf{s}}}
\nc{\ord}{{\mathsf{ord}}}
\nc{\interior}{{\mathsf{int}}}
\nc{\diag}{{\mathsf{Diag}}}
\nc{\hess}{{\on{Hess}}}
\rc{\l}{\ell}
\nc{\supp}{\mathsf{supp}}

\nc{\GL}{{\mathsf{GL}}}
\rc{\epsilon}{\varepsilon}
\nc{\ball}{{\mathsf{B}}}
\nc{\func}{{\mathcal{H}}}
\rc{\hom}{{\mathsf{H}}}
\nc{\pivals}{{\Pi}}
\nc{\SO}{{\mathsf{SO}}}
\nc{\mytri}{{\mathsf{FK}}}
\nc{\simp}{\triangle}
\nc{\regsimp}[1]{{\triangle_{#1}}}
\nc{\ordsimp}[1]{{\triangle^{\mathsf{ord}}_{#1}}}
\nc{\cube}{\square}
\nc{\tanspc}{{\mathsf{T}}}
\nc{\normspc}{{\mathsf{N}}}
\nc{\permgrp}{\mathfrak{S}}
\nc{\permelt}{\mathfrak{s}}
\nc{\hausmeas}{{\mathscr{H}}}
\nc{\eqdef}{:=}
\nc{\indexset}[1]{{[ #1 ]_{\ZZ}}}
\nc{\equalinds}[1]{{\indexset{#1}}_{\mathsf{eq}}}
\nc{\constinds}[1]{{\indexset{#1}}_{\mathsf{con}}}
\nc{\eqtext}{{\mathsf{eq}}}
\nc{\simpclosd}{{\overline{\simp}_d^\ord}}
\nc{\FK}{{\mathsf{FK}}}
\nc{\fkteuclid}[1]{{\mathscr{R}^{#1}_{\FK}}}
\nc{\fktksimplex}[2]{{\mathscr{R}^{#1}_{\FK}(#2)}}
\rc{\l}{{\ell}}
\rc{\angle}{{\measuredangle}}
\nc{\reach}{{\varrho}}
\nc{\curv}{{\kappa}}
\nc{\euclidean}{{\mathsf{E}}}
\nc{\incone}{{\mathfrak{I}}}
\nc{\nincone}{{\mathfrak{N}}}
\nc{\rmax}{{\rho}}
\nc{\wtilde}{\widetilde}
\nc{\jacobian}{\mathsf{J}}
\nc{\univstar}{*}

\rc{\t}{\text}
\nc{\mbf}{\mathbf}
\nc{\mb}{\mathbb}
\nc{\tc}{\textcolor}
\nc{\mf}{\mathfrak}
\nc{\mc}{\mathcal}
\nc\on{\operatorname}
\nc{\vecbf}[1]{{\mathbf{#1}}}
\nc{\matbf}[1]{{\mathbf{#1}}}
\rc{\paragraph}[1]{\textbf{#1}}

\nc{\explainbox}[1]{
\noindent
\fbox{
    \parbox{\textwidth}
        {\vspace{1mm} {#1} \vspace{1mm}}
    }
\vspace{3mm}}

\title{Universal topological statistics on \\
triangulated singular spaces}

\begin{document}

\author[U. Lim]{Uzu Lim}
\address{Queen Mary University of London}
\email{sung.h.lim@qmul.ac.uk}

\author[O. Bobrowski]{Omer Bobrowski}
\email{o.bobrowski@qmul.ac.uk}

\author[P. Skraba]{Primoz Skraba}
\email{p.skraba@qmul.ac.uk}

\begin{abstract}
    We prove a universality theorem for random persistent homology over a class of triangulable spaces. More precisely, let $M \subset \RR^D$ be a compact $C^2$-triangulable space satisfying a geometric quality condition and let $f: M \to \RR$ be a probability density. Then the expected persistence ratio measure computed from the \v Cech or Vietoris-Rips complex of a Poisson point process with intensity $nf$ has a universal limit independent of $(M, f)$. Since smooth manifolds, algebraic varieties, semialgebraic sets and Whitney stratified spaces are all triangulable spaces, our theorem applies to a large class of non-Euclidean spaces. Beyond persistent homology, our proof covers a general class of scale-invariant functionals. It relies on a geometric transfer method that adapts constructions in Euclidean space to triangulable spaces through successive approximations by Freudenthal--Kuhn triangulations, and control of interference across singular strata.
\end{abstract}

\maketitle

\section{Introduction}
Universality describes the phenomenon where applying a transformation collapses a whole family of mathematical objects into a single universal object. The central limit theorem is a classical example where various probability distributions collapse into the normal distribution upon taking the sample mean and rescaling. In random matrix theory, universality is exhibited in Wigner's semicircle law and its subsequent refinements \cite{rmt_univ1, rmt_univ2, rmt_univ3}. Other examples include extreme value theory, the KPZ universality class \cite{kpz1}, and geometric probability \cite{penrose_yukich1, penrose_yukich2}. 

Recently, persistent homology emerged as another surprising example of an object exhibiting universality. Persistent homology assigns homology groups to a discrete metric space across multiple scales of connectivity. It is a key tool of topological data analysis (TDA); for a survey, see \cite{tda_overview1, tda_overview2, tda_overview3}. Applications include neuroscience \cite{gridcell_1, gridcell_5}, quantum physics \cite{ph_quantum1, ph_quantum2}, cell biology \cite{ph_cell1, ph_cell2} and symplectic topology \cite{ph_puremath2, ph_puremath3}. To observe universality, one computes the death/birth ratios arising from the persistent homology of a random point process, and notices that the empirical distribution of the ratios is independent of the law of the point process. This was initially discovered experimentally \cite{univ_experiment}, and later proven for probability densities supported on the Euclidean space \cite{univ_euclidean}.

In this paper we take the universality theorem of persistent homology and generalize its domain from Euclidean spaces to \textit{triangulable spaces}. A broad class of spaces, including those with non-zero, non-constant curvature and singularities are triangulable, including smooth manifolds, algebraic varieties, semialgebraic sets, and Whitney stratified spaces. It is important to note that this work only deals with transient cycles which arise due to randomness in the underlying point process rather than those which are a result of non-trivial topology of the support -- see \cite{univ_experiment} for a formal definition. 

Our key contributions are two-fold:
\begin{enumerate}
    \item We prove the universality theorem of persistence ratios and scale-invariant functionals over \textit{admissible triangulable spaces} (Theorem \ref{thm:A} and \ref{thm:A_general})
    \item We establish methods to transfer geometric statistics from a Euclidean space to triangulable spaces using approximations by Freudenthal-Kuhn triangulation and a control of singular strata (Propositions \ref{prop:B} and \ref{prop:C}).
\end{enumerate}

The admissible triangulable spaces for which our results hold are characterized by the  condition that neighbouring simplices must meet at a positive angle (see \eqref{eqn:separation_angle}). In particular, we do not allow cusp singularities in the triangulable space. While the main objective of the general tools developed in the paper was to prove the universality theorem of persistent homology, we expect that the universality theorem for general functionals is applicable for a variety of functionals. We also expect that the geometric transfer framework we develop here can be applied to other constructions in stochastic geometry and topology.

\medskip
\noindent\paragraph{Structure of the paper.} We provide the background material in Section \ref{section: preliminary}. We then state our main results in Section \ref{section: main_statements}. Sections \ref{section: proofs_main}--\ref{section: proofs_technical} contain the proofs. We first prove the main results (Section \ref{section: proofs_main}), then the geometric transfer methods (Section \ref{section: proofs_geometric}), and the more technical lemmas (Section \ref{section: proofs_technical}). The Appendix contains a derivation of the FK-triangulation theory and important nuances regarding the necessity of simplex quality and vertex ordering.

\medskip
\noindent\paragraph{Related work.} Stochastic topology studies the probabilistic behavior of topological invariants; for a survey, see \cite{stochastic_topology_survey1,stochastic_topology_survey2}. In particular, Hiraoka, Shirai, and Trinh established the convergence of normalized random persistence diagrams to deterministic limiting measures \cite{hiraoka_shirai_trinh}. Subsequent work refined probabilistic and analytic structure of the limiting objects \cite{divol_polonik, skraba_yogesh, owada_bobrowski, shirai_suzaki, krebs_polonik}. The universality phenomena of persistence ratios have been reported experimentally \cite{univ_experiment}, proven theoretically for Euclidean spaces \cite{univ_euclidean}, and also explored for the case of nearest-distance based dimension estimators \cite{univ_l2nn}. In stochastic geometry, universality behavior was first observed in the theory of stabilizing functionals developed by Penrose and Yukich \cite{penrose_yukich1, penrose_yukich2}. 

Triangulation is a central object in pure mathematics and computational geometry. One prominent line of research is studying classes of manifolds that admit triangulations. Smooth manifolds of any dimension are triangulable \cite{cairns, whitehead}. Topological $n$-manifolds are triangulable when $n \le 3$ while counterexamples exist for $n \ge 4$ \cite{casson_akbulut_mccarthy, kirby_siebenmann, manolescu}. Many classes of non-manifold spaces are also triangulable, such as semialgebraic sets \cite{hironaka_triangulation} and Whitney stratified spaces \cite{goresky_triangulation}. In computational geometry, triangulations of dimensions 2 and 3 are studied often with applications in mind. Key constructions include the Delaunay triangulation \cite{delaunay1, sibson} and red-green refinement \cite{bank}. One line of work relevant to us are the Coxeter triangulations and Freudenthal-Kuhn triangulation \cite{cfk1, cfk2, cfk3, coxeter_quality}. Our construction of adapting the Freudenthal-Kuhn triangulation to arbitrary simplicial complexes is also known as the \textit{edgewise subdivision} \cite{edelsbrunner_grayson}. While pure topology and computational geometry often have little overlap, this paper draws influence from both fields.

Our focus on spaces with singularities is motivated by the complexity of real-world data. The manifold hypothesis states that a generic real-world dataset is distributed near a low-dimensional smooth submanifold, but evidence of singularities have been found in image datasets \cite{union_of_manifolds_brown}, token embeddings for large language models \cite{token_mixed_dim}, token embeddings for reinforcement learning \cite{curry_speranzon}, and single-cell transcriptomic data \cite{packer_celegans}. Such datasets need not be well described by a single smooth manifold. Intersections, branching, and changes in local dimension can all produce singular behavior. Algorithms have been proposed to study the singularities of such datasets, using persistent homology and kernel techniques \cite{singdet1, singdet2, singdet3}. This motivates our extension of universality beyond manifolds to the much broader class of triangulable spaces.

\medskip
\noindent\textbf{Acknowledgement.} We would like to thank Mathijs Wintraecken for helpful discussions and suggestions regarding triangulations and simplex quality.

\section{Preliminary notions}
\label{section: preliminary}

We will first cover some preliminary notions and conventions (see also Appendix \ref{appendix: notations}):
\begin{itemize}[noitemsep]
    \item $\dist_\euclidean$ is the Euclidean distance, and $\dist_M$ is the intrinsic distance on a metric space $M$.
    \item $\dist_M (x, S) = \inf_{y \in S} \dist_M (x, y)$, where $M$ is a metric space, $x \in M$, and $S \subseteq M$.
    \item $\ball(x, r)$ is the closed ball of radius $r$ centred at $x$, and $\ball(S, r) = \cup_{x \in S} \ball(x, r)$.
    \item $\|v\|_p$ is the $p$-norm of a vector and also $\|v\|=\|v\|_2$. 
    \item $\sing_{\min}(A)$ is the minimum singular value of a real matrix $A$.
\end{itemize}

\medskip
\noindent\paragraph{Triangulation.} A \textit{simplicial complex} is a topological space obtained by gluing affine simplices compatibly across their faces. An affine simplex is an affine linear embedding of the standard simplex $\regsimp{d} = \{ (t_0, \ldots t_d) \>|\> t_0 + \cdots + t_d = 1, \> t_i \ge 0 \}$. A simplicial complex is said to be \textit{purely $d$-dimensional} if all of its maximal simplices are $d$-dimensional. A \textit{triangulation} $\phi: K \to M$ is a homeomorphism from a simplicial complex $K$ to a metric space $M$. If a triangulation $\phi$ is a $C^k$ map on each closed simplex of $K$, then we say that it is a $C^k$ triangulation. Here, $\phi$ is said to be a $C^k$ map on a closed simplex $\sigma$ if $\phi$ has $k$-th continuous partial derivatives on some open neighbourhood of $\sigma$.

\medskip
\noindent\paragraph{Simplex quality.} The quality of a simplex is a measure of how far the simplex is from being degenerate. There are many notions of simplex quality; see \cite{coxeter_quality} for a comparison for the Coxeter triangulations of the Euclidean space. Given an affine simplex $\sigma \subset \RR^D$ with vertices $(u_0, \ldots u_d)$, we define its \textit{diameter} $\diam(\sigma)$, \textit{thickness} $\thick(\sigma)$, and \textit{S-width} $\sthick(\sigma)$ as follows:
\begin{align} \label{eqn:simplex_quality}
    \diam(\sigma) \eqdef \max_{i, j} \dist_\euclidean(u_i, u_j), \qquad \thick(\sigma) \eqdef \frac{\min_{i} \dist_\euclidean (u_i, P_i)}{\diam(\sigma)}, \qquad \sthick(\sigma) \eqdef \max_{0 \le i \le d} \sing_{\min}(A_{\sigma, i}),
\end{align}
where $P_i$ is the affine space spanned by the facet of $\sigma$ opposite to the vertex $u_i$ and $A_{\sigma, i} = [u_0 - u_i, \ldots u_d - u_i] \in \RR^{D \times d}$ is the matrix of vertex displacements. It is well known that the diameter is also equal to the longest pairwise distance of points on $\sigma$. Let us also note that for each $\lambda > 0$, we have $\diam(\lambda \cdot \sigma) = \lambda \cdot \diam(\sigma)$, $\thick(\lambda \cdot \sigma) = \thick(\sigma)$, and $\sthick(\lambda \cdot \sigma) = \lambda \cdot \sthick(\sigma)$.

\medskip
\noindent\paragraph{Spatial Poisson process.} Let $f: \RR^D \rightarrow \RR_{\ge 0}$ be a measurable function such that $\nu = \int_{\RR^D} f(x) \diff x$ satisfies $0 < \nu < \infty$. The \textit{Poisson point process} with intensity $f$, denoted $\pointprocess_{f}$, can be defined as the i.i.d.~sample of size $N$ drawn from the probability density $\nu^{-1} f$, where $N$ is a Poisson random variable with parameter $\nu$.

\medskip
\noindent\paragraph{Reach.} Given a set $S \subseteq \RR^D$ and a point $x \in \RR^D$, let $\pi_S(x) \subset S$ be the set of all $s \in S$ that satisfies $\dist_\euclidean(x, S) = \dist_\euclidean(x, s)$. Then the \textit{reach} $\reach_S$ is defined as the supremum of all $r>0$ so that the following holds: For every point $x$ satisfying $\dist_\euclidean(x, S) \le r$, we have $|\pi_S(x)| = 1$. For our purposes, reach is a useful tool that controls the second-order deviation of a geodesic on a manifold:

\begin{lem}[Proposition 6.3 of \cite{niyogi_smale_weinberger}] \label{lem:geodesic_control}
    Let $M \subset \RR^D$ be a compact $C^2$ manifold. Suppose that $x, y\in M$ are connected by a geodesic $\gamma_{xy}$ of length $s$, and write $r = \|x-y\| $. Define $v$ as the initial velocity of $\gamma_{xy}$ at $x$. If $r \le (\sqrt 2 - 1) \reach_M$, the following are true:
    \begin{align*}
        & s - \frac{s^2}{2 \reach_M} \le r \le s \le r + \frac{r^2}{\reach_M}, \qquad\text{and}\qquad \|y - (x + s v) \| \le \frac{r^2}{\reach_M}.
    \end{align*}
\end{lem}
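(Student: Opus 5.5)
We work with the unit-speed geodesic $\gamma=\gamma_{xy}:[0,s]\to M$, $\gamma(0)=x$, $\gamma'(0)=v$, $\gamma(s)=y$. We use one standard fact about manifolds of positive reach (Federer; Niyogi--Smale--Weinberger): the second fundamental form has norm at most $1/\reach_M$. Since a geodesic's acceleration is normal to $M$ and equals the second fundamental form applied to $(\gamma',\gamma')$, this gives $\|\gamma''(t)\|\le 1/\reach_M$ for all $t$. All three inequalities will be derived from this curvature bound by Taylor expansion, plus one comparison with the chord length.

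\textbf{Step 1: the deviation bound and the lower bound on $r$.} Write
\[
y-(x+sv)=\int_0^s\!\int_0^t\gamma''(u)\,du\,dt .
\]
Hence $\|y-(x+sv)\|\le s^2/(2\reach_M)$. By the triangle inequality,
\[
r=\|y-x\|\ge \|sv\|-\|y-(x+sv)\| \ge s-\frac{s^2}{2\reach_M},
\]
which is the leftmost inequality. The bound $r\le s$ holds because $r$ is the length of the chord and $s$ the length of a path with the same endpoints.

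\textbf{Step 2: an upper bound on $s$ in terms of $r$.} Set $q=s/\reach_M$. Step 1 gives $r/\reach_M\ge q-q^2/2$. The function $q\mapsto q-q^2/2$ increases on $[0,1]$ up to the value $1/2$, and $r/\reach_M\le \sqrt2-1<1/2$. So we can invert, provided we know $s$ lies on the correct branch, namely $q\le 1$. This yields
\[
s\le \reach_M\Bigl(1-\sqrt{1-2r/\reach_M}\Bigr).
\]
Using $\sqrt{1-2a}\ge 1-a-a^2$, valid for $a\le\sqrt2-1$, we get $s\le r+r^2/\reach_M$. Substituting this into $s^2/(2\reach_M)$ gives
\[
\frac{s^2}{2\reach_M}\le \frac{(r+r^2/\reach_M)^2}{2\reach_M}\le \frac{r^2}{\reach_M}
\]
whenever $(1+r/\reach_M)^2\le 2$. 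That condition is exactly $r\le(\sqrt2-1)\reach_M$, which explains the constant in the hypothesis.

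\textbf{Main obstacle: excluding the wrong branch.} The difficulty is ruling out a long geodesic with $s>\reach_M$ joining nearby points. We handle this by taking $\gamma_{xy}$ to be a minimizing geodesic. For a minimizer we need an a priori bound showing that nearby points are geodesically close. This is the content of the Niyogi--Smale--Weinberger argument: $\dist_M(x,y)\le \reach_M\bigl(1-\sqrt{1-2r/\reach_M}\bigr)$. It is proved by a continuity argument along a path in $M$: the quantity $s(t)$ varies continuously, starts at $0$, and can never jump across the gap between the two roots of $q-q^2/2=r/\reach_M$. Once this places $q\le 1$, Step 2 applies and the proof is complete.
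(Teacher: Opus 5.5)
The paper gives no argument of its own for this lemma; it only cites Niyogi--Smale--Weinberger. Your Steps 1--2 follow the standard route behind that result: bound the second fundamental form by $1/\reach_M$, Taylor-expand a unit-speed geodesic to second order, and invert $s-s^2/(2\reach_M)\le r$. Those steps are correct. The inequality $\sqrt{1-2a}\ge 1-a-a^2$ is equivalent after squaring to $a^2(a^2+2a-1)\le 0$, i.e.\ $a\le\sqrt2-1$, and $(1+r/\reach_M)^2\le 2$ is the same condition, so the constant is fully accounted for. You are also right that \emph{geodesic} must mean minimizing, or at least of length $\le\reach_M$. On a circle of radius $\reach_M$ with $x=y$, the geodesic going once around has $s=2\pi\reach_M$ and $r=0$.

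The gap is the branch-selection step, which you describe but do not carry out. Applying Step 1 to a minimizing geodesic from $x$ to a point $c(t)$ rules out $\dist_M(x,c(t))=\reach_M$ only while $\|x-c(t)\|<\reach_M/2$. Once the Euclidean distance reaches $\reach_M/2$, the two roots $1\pm\sqrt{1-2\|x-c(t)\|/\reach_M}$ merge, and nothing stops $\dist_M(x,c(t))$ from passing through $\reach_M$. So your continuity argument needs a path in $M$ from $x$ to $y$ that stays inside the Euclidean ball of radius $<\reach_M/2$ about $x$. You never say where this path comes from. Using the minimizing geodesic itself would be circular, and an arbitrary path does not work.

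This is the one place where the \emph{global} reach hypothesis has to enter. Otherwise your argument uses only the pointwise curvature bound, and with $\reach_M$ replaced by the inverse of a curvature bound the lemma is false. For example, a planar curve with curvature at most $1/\tau$ can double back through an $\Omega$-shaped loop, so that two points at Euclidean distance $\epsilon\ll\tau$ are at intrinsic distance at least $\pi\tau$. Deferring this step to \emph{the content of the Niyogi--Smale--Weinberger argument} is circular, since that is the very statement being proved.

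The fix is the projected chord, which the paper itself uses in Case 1 of the safe-subdivision proposition. Put $z_t=(1-t)x+ty$ and $c(t)=\pi_M(z_t)$. Since $\dist_\euclidean(z_t,M)\le r/2<\reach_M$, the nearest-point projection is single-valued and continuous along the chord. Moreover $\|c(t)-x\|\le\|c(t)-z_t\|+\|z_t-x\|\le\|y-z_t\|+\|z_t-x\|=r<\reach_M/2$. Hence $t\mapsto\dist_M(x,c(t))$ is continuous, starts at $0$, and never equals $\reach_M$. It follows that $s=\dist_M(x,y)<\reach_M$, and your Step 2 then completes the proof.
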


\medskip
\noindent\paragraph{Persistent homology.} Persistent homology is a method to assign homology groups to a nested family of topological spaces. In this paper, such a nested family is given by continuous approximations of a discrete set over multiple scales. Let $K_\bullet = (K_1, K_2, \ldots, K_m)$ be a filtered simplicial complex, meaning that $K_i \subseteq K_{i+1}$ for all $i$. Then the \textit{$k$-th persistence module} of $K_\bullet$ with the field of coefficients $\FF$ is the $\FF[t]$-module defined as $\ph_k(K_\bullet) = \homology_k(K_1; \FF) \oplus \cdots \oplus \homology_k(K_m; \FF)$. The $\FF[t]$-module structure is given by $t \cdot \omega = \iota^{i}_*(\omega)$ for each $\omega \in \homology_k(K_i; \FF)$, where $\iota^{i}: K_i \hookrightarrow K_{i+1}$ is the inclusion map and $\iota^{i}_*$ is the induced map on homology groups. By applying the structure theorem of modules over a principal ideal domain, one obtains the following decomposition of a persistence module into interval modules $\mathbb{I}_\FF[b, d)$ (see \cite{crawley_decomposition} for a more detailed explanation):
\begin{align*}
    \ph_k(K_\bullet) \cong \bigoplus_{i \in \mathcal{I}} \mathbb{I}_\FF[b_i, d_i).
\end{align*}
Intuitively, a persistence module records how homological features evolve as the scale parameter increases. Each interval $[b,d)$ in its decomposition represents a feature that first appears, or is \emph{born}, at scale $b$ and disappears, or \emph{dies}, at scale $d$. Consequently \textit{persistence diagram} and \textit{persistence ratios} are defined by collecting all data of $(b, d)$ and $d/b$ appearing in the above decomposition:
\begin{align*}
    \pd_k(K_\bullet) \eqdef & \bigg\{ (b_i, d_i) \>\bigg|\> i \in \mathcal{I}, \> \ph_k(K_\bullet) \cong \bigoplus_{i \in \mathcal{I}} \mathbb{I}_\FF[b_i, d_i) \bigg\}, \\
    \Pi_k(K_\bullet) \eqdef & \bigg\{ \>\>\>\> \frac db \>\>\>\>\>\> \bigg|\> (b,d) \in \pd_k(K_\bullet) \bigg\}.
\end{align*}

The \textit{Vietoris-Rips and \v Cech filtrations} are standard methods used to produce continuous approximations of a discrete  metric space $\ptx$. Let $M$ be a metric space and let $\ptx \subset M$ be a finite subset. The \textit{Vietoris-Rips complex} $\rips_r(\ptx)$ is constructed by inserting $k$-simplex to each subset $\{x_0, \ldots x_k \} \subset \ptx$ whose pairwise distances are all within $r$, i.e. $\dist_M(x_i, x_j) \le r$, $\forall (i,j)$. The \textit{\v Cech complex} $\Cech_r(\ptx)$ is constructed by inserting $k$-simplex to each subset $\{x_0, \ldots x_k \} \subset \ptx$ such that balls of radius $r$ centred at $x_i$ have a nonempty intersection, i.e. $\cap_i \ball(x_i, r) \neq \emptyset$. The Rips and \v Cech \textit{filtration} are defined as $\rips_\bullet(\ptx) = \{ \rips_r(\ptx) \}_{r \ge 0}$ and $\Cech_\bullet(\ptx) = \{ \Cech_r(\ptx) \}_{r \ge 0}$.

\section{Statement of main results}
\label{section: main_statements}

\subsection{The case of Euclidean space}

We first recall results of \cite{univ_euclidean}, where universality theorems were proven over the setting of the Euclidean space. Given a probability density $f: \RR^d \rightarrow \RR_{\ge 0}$ and either the Vietoris-Rips or the \v Cech filtration $\filt$, the \textit{expected persistence measure} $\overline{\Pi}_{k, \filt}(nf)$ is a measure on $\RR$ is defined as follows. For every Borel set $B \subseteq \RR$, we set:
\begin{align} \label{eqn:expected_persistence_measure}
    \overline{\Pi}_{k, \filt}(nf)(B) \>\eqdef\>\> & \EE\> \big| \Pi_k (\filt_\bullet' (\pointprocess_{nf} )) \cap B \big|,
\end{align}
where $\filt_\bullet'$ is the curtailed filtration that grows the filtration radius up to $\rmax_n$. Here $\rmax_n$ is a radius cutoff parameter that satisfies $\rmax_n \to 0$ and $n \rmax_n^d \to \infty$ as $n \to \infty$.

The universality results hold for a \textit{good density}, defined as a probability density $f: \RR^d \to \RR_{\ge 0}$ satisfying the following conditions. Firstly, the support $\supp(f)$ is compact. Secondly, either $\inf_{x \in \supp(f)} f(x) > 0$ holds, or there exist constants $c_\pm, q > 0$ such that $c_- (\delta(x))^q \le f(x) \le c_+ (\delta(x))^q$. Here, $\delta(x) = \dist_\euclidean \big( x, f^{-1}(0) \cap \supp(f) \big)$ is the distance of $x$ to the boundary of support. While the universality theorem was proven in \cite{univ_euclidean} for certain densities of unbounded support, we only focus on compact support in this paper. 

\begin{thm}[Universality of persistence on Euclidean space \cite{univ_euclidean}] \label{thm:univ_euclidean}
    Let $f: \RR^d \rightarrow \RR_{\ge 0}$ be a good density and let $\filt$ be either the Rips or \v Cech filtration. Then there exists a measure on $[1,\infty)$, denoted $\overline{\Pi}_{k,\filt, d}^\univstar$, such that the following holds independently of $f$:
    \begin{gather*}
        \lim_{n \rightarrow \infty} \frac 1n \overline{\Pi}_{k, \filt} ( nf ) = \overline{\Pi}_{k,\filt,d}^\univstar.
    \end{gather*}
\end{thm}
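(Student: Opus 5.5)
We would prove Theorem \ref{thm:univ_euclidean} by a localization argument: approximate $f$ by a piecewise constant density on a fine cubical grid, show that persistence ratios are essentially local and scale-invariant, and use the homogeneous Poisson process as the source of the universal measure. The key structural fact is \emph{scale invariance}. If $\ptx \subset \RR^d$ and $\lambda>0$, then $\filt_r(\lambda \ptx) = \filt_{r/\lambda}(\ptx)$ as abstract complexes, so $\Pi_k$ is unchanged. Hence a Poisson process of intensity $n c$ on a cube $Q$ has the same ratio statistics as a Poisson process of intensity $1$ on the rescaled cube $(nc)^{1/d} Q$. The universal measure is then defined as a volume-normalized limit for the homogeneous process:
\[
\overline{\Pi}^\univstar_{k,\filt,d}(B) \eqdef \lim_{L \to \infty} L^{-d}\, \EE\,\big|\Pi_k(\filt_\bullet(\pointprocess_{1} \cap [0,L]^d)) \cap B\big|.
\]
We would show this limit exists by a subadditivity (or ergodic/stabilization) argument. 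One decomposes $[0,L]^d$ into subcubes, and controls the difference of persistence diagrams between a union and its pieces by the number of features touching the boundaries. Such features are $O(L^{d-1})$ in expectation once radii are truncated, for example by bounding the count of small simplices near the boundary.

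Next we would handle a general good density $f$. Partition $\supp(f)$ into cubes $Q_j$ of side $\delta$. On $\{f\ge\eta\}$ the density is nearly constant on each cube, $f \approx c_j$. By scale invariance and the thinning/superposition coupling of Poisson processes, $\EE|\Pi_k \cap B|$ computed in $Q_j$ is $\approx n c_j \delta^d \cdot \overline{\Pi}^\univstar(B)$, because $(nc_j)^{1/d}Q_j$ has volume $nc_j\delta^d$. Summing gives $n\int f \cdot \overline{\Pi}^\univstar(B) = n\,\overline{\Pi}^\univstar(B)$. The errors come in three kinds:
\begin{enumerate}
\item \emph{Density variation within a cube.} We would couple $\pointprocess_{nf}$ with $\pointprocess_{nc_j}$ and bound the number of features affected by the symmetric difference of the two processes.
\item \emph{Features crossing cube boundaries or with death radius exceeding $\rmax_n$.} These are controlled using the curtailed filtration: $\rmax_n\to0$ makes the boundary layers thin relative to $\delta$. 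The condition $n\rmax_n^d\to\infty$ ensures the truncation loses only $o(n)$ features, since features with large rescaled radius are rare per unit volume.
\item \emph{The low-density region $\{f<\eta\}$.} Its contribution is bounded by the expected number of points there, $n\int_{f<\eta} f$. This is $o(1)\cdot n$ as $\eta\to0$, using the growth condition $c_-\delta(x)^q\le f$ near the boundary of the support.
\end{enumerate}

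The main obstacle is the \emph{locality bound}: controlling how much $\Pi_k$ changes when points are added or removed, or when the domain is cut. Persistent homology is not additive, and a single point can in principle alter many intervals. We would bound the change in the number of intervals by the number of simplices of dimension $k$ and $k+1$ (with filtration value at most $\rmax_n$) that involve the affected points. This works because adding one simplex changes the barcode by at most one interval. The simplex count is then estimated by a Mecke-formula computation: the expected number of $(k+2)$-tuples within distance $\rmax_n$ of each other near a set $A$ is $O(n\,\mathrm{vol}(A))$ times a polynomial in $n\rmax_n^d$.

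The difficulty is that this polynomial grows. So the truncation argument must instead restrict to features with $d/b\in B$ for bounded $B$, and use that their birth radius in rescaled units is $O(1)$ with high probability. Only after that would we take $n\to\infty$, then $\delta\to0$, then $\eta\to0$.
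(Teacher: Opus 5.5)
The paper does not prove this statement. It is imported from \cite{univ_euclidean}, and the only route indicated (the Remark after Theorem \ref{thm:univ_euclidean_general}) is to apply the general theorem to $\func_n(\ptx)=|\Pi_k(\filt'_\bullet(\ptx))\cap[\alpha,\infty)|$ for each $\alpha>1$, with conditions (1)--(5) verified in \cite{univ_euclidean}. Your outline has the same skeleton: scale invariance and a homogeneous reference for (1) and (3), cube boundaries for (5), density variation and the low-density part for (4), truncation for (2). Defining $\overline{\Pi}^\univstar_{k,\filt,d}$ through a full-filtration limit over growing windows, rather than keeping $\rmax_n$ inside the functional, is a cosmetic difference. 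The genuine gap is that the analytic core is asserted rather than proved. Every error term you list is controlled, via Lemma \ref{lem:delete_one_simplex} and the Campbell--Mecke formula, by the number of $k$- and $(k+1)$-simplices through the affected points. That number is of order $(n\rmax_n^d)^{k+1}$ per point. A growth restriction on $n\rmax_n^d$ (as in Lemma \ref{prop:sigma_control}) could absorb the cube-boundary layers, but not the density-variation step. For a fixed mesh $\delta$ and your step approximation $f_\delta$, the term $n\int|f-f_\delta|\cdot(n\rmax_n^d)^{k+1}$ is not $o(n)$. Your definition of $\overline{\Pi}^\univstar_{k,\filt,d}$ has the same defect: with the full filtration every tuple eventually enters, so the number of simplices crossing a cut is not $O(L^{d-1})$ and the subadditivity argument gives nothing.

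The localization you gesture at in your last paragraph is the right one for $B=[\alpha,\infty)$, $\alpha>1$. In a homogeneous window, the number of intervals with birth at most $R$ and ratio at least $\alpha$ is determined by the complex at scale $\alpha R$, so only simplices of size at most $\alpha R$ matter. But this helps only together with a quantitative tail bound: the expected number per unit volume of intervals with rescaled birth larger than $R$ and $d/b\ge\alpha$ must tend to $0$ as $R\to\infty$. This must hold uniformly in the window, and for inhomogeneous $f$ with $R$ measured in units of the local spacing $(nf(x))^{-1/d}$. ``Birth radius $O(1)$ with high probability'' is not this statement, you give no argument for it, and it is the real content of the theorem. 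For \v Cech it follows from Morse theory of the distance function: each death value is a critical value witnessed by an empty ball of that radius, which gives decay like $e^{-cR^d}$ up to polynomial factors. For Vietoris--Rips, death times have no empty-ball characterization, so a separate argument is needed.

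Two further steps are wrong as written. In item (3), deleting the points of $\{f<\eta\}$ changes the count by up to the number of $k$- and $(k+1)$-simplices containing them, not by their number. Points near the interface with $\{f\ge\eta\}$ also have many high-density neighbours. Moreover, $n\int_{\{f<\eta\}}f\le n\eta\,|\supp(f)|$ for every compactly supported density, so the good-density condition plays no role in your step. That indicates the delicate region near $f^{-1}(0)\cap\supp(f)$, where no single global rescaling is appropriate, has not been treated. Finally, adding one simplex can re-pair many intervals through a cascade of deaths. What holds, and what you actually need, is only that $|\Pi_k\cap[\alpha,\infty)|$ changes by at most one.
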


Theorem \ref{thm:univ_euclidean} follows from a more general result that holds for many scale-invariant functionals. In this paper, a \textit{functional} $\func$ refers to a function that takes a finite set $\ptx \subset \RR^D$ and outputs a real number $\func(\ptx) \in \RR$. Given a non-negative measurable function $f: \RR^d \to \RR_{\ge 0}$ with $\int_{\RR^d} f(x) \diff x \in (0, \infty)$, we will denote the expected value of the functional evaluted with the Poisson point process $\pointprocess_f$ as $\overline{\func}(f) = \mathbb E \big[ \func( \pointprocess_{f} ) \big]$. Note that the argument of $\func$ is a set, while the argument of $\overline{\func}$ is a function:
\begin{align*}
    \func(\ptx), \quad \text{vs.} \quad \overline{\func}(f)
\end{align*}

\paragraph{Conditions for functionals.} Let $\func_n$ be a functional that depends on an optional control parameter $n$. In the case of the persistence functional, the parameter $n$ in $\func_n$ is used for the radius cutoff $\rmax_n$. Then the following conditions are of interest to us:
\begin{enumerate}[label=(\arabic*)]
    \item \underline{Rigidity.} Let $\ptx \subset \RR^D$ be a finite subset. For any vector $x \in \RR^D$ and rotation $R \in \mathsf{SO}(D)$,
    \begin{align*}
        \func_n (x + R(\ptx)) = \func_n (\ptx).
    \end{align*}
    \item \underline{Control homogeneity.} Let $f$ be a probability density. Then for any $c>1$,
    \begin{align*}
        \lim_{n \to \infty} \frac{\overline{\func}_{cn}(nf)}{\overline{\func}_n(nf)} = 1.
    \end{align*}
    \item \underline{Scale-invariant reference.} Let $f_\epsilon = \epsilon^{-d} \mathbf{1}_{[0,\epsilon]^d}$. There exists a number $\overline{\func}^*$ so that the following holds for every $\epsilon \in (0, 1]$:
    \begin{align*}
        \lim_{n \to \infty} \frac1n \overline{\func}_n(n f_\epsilon) = \overline{\func}^*.
    \end{align*}
    \item \underline{Continuity.} Let $f$ be a probability density and let $\{f_i\}$ be a sequence of non-negative functions such that $f_i \le f_j$ whenever $i \le j$. Suppose there is a pointwise convergence $f_i \to f$. Then the following holds:
    \begin{align*}
        \lim_{i \to \infty} \> \limsup_{n \to \infty} \> \frac1n \bigg| \overline{\func}_n(nf) - \overline{\func}_n(n f_i) \bigg| = 0.
    \end{align*}
    \item \underline{Cube additivity.} Let $Q = [0, 1]^d$ and let $Q_1, \ldots Q_{m^d}$ be the cubes of sidelength $1/m$ obtained by partitioning $Q$ into smaller cubes. Let $f = \mathbf{1}_{Q}$ and $f_i = c_i m^d \cdot \mathbf{1}_{Q_i}$, where $c_i \ge 0$ and $\sum_i c_i = 1$. Then,
    \begin{align*}
        \lim_{n \to \infty} \frac1n \bigg| \overline{\func}_n(n f) - \sum_{i=1}^{m^d} \overline{\func}_n(nf_i) \bigg| = 0.
    \end{align*}
\end{enumerate}

We note that the condition (1) as stated includes rotations, which was not required in \cite{univ_euclidean}. However, persistent homology is invariant to rotations of the underlying set. Assuming the above conditions, the following general result was proven in \cite{univ_euclidean}.

\begin{thm}[Universality of general functionals on Euclidean space \cite{univ_euclidean}] \label{thm:univ_euclidean_general}
    Let $f: \RR^d \rightarrow \RR_{\ge 0}$ be a good density and let $\func_n$ be a functional that satisfies the conditions (1)--(5). Then there exists a real number $\overline{\func}^*$ such that the following holds independently of $f$,
    \begin{gather*}
        \lim_{n \rightarrow \infty} \frac 1n \overline{\func}_n ( nf ) = \overline{\func}^\univstar.
    \end{gather*}
\end{thm}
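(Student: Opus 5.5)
The plan is to go from the scale-invariant reference (3) to general good densities in three stages: uniform densities on small cubes, then step functions on a grid, then arbitrary good densities by monotone approximation using (4). Write $\overline{\func}^*$ for the constant provided by (3). Rigidity (1) makes $\overline{\func}_n(n f_\epsilon(\cdot - x))$ independent of the translation $x$. So for every cube $Q'$ of sidelength $\epsilon\le 1$ we have $\frac1n\overline{\func}_n(n\epsilon^{-d}\mathbf{1}_{Q'}) \to \overline{\func}^*$.

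\textbf{Step 1 (masses other than one).} We need the analogous statement for $c\,\epsilon^{-d}\mathbf{1}_{Q'}$ with $c\in(0,1]$. Writing $n c f_\epsilon = (cn) f_\epsilon$ gives
\begin{align*}
\frac1n \overline{\func}_n(cn f_\epsilon) = c\cdot\frac{1}{cn}\overline{\func}_n((cn)f_\epsilon).
\end{align*}
Control homogeneity (2), applied with the ratio $1/c$ (or $c$, after relabelling $n$), lets us replace the index $n$ by $cn$ at the cost of a factor tending to $1$. The expression therefore tends to $c\,\overline{\func}^*$. The only care needed is that (2) is stated for ratios $c>1$ and for probability densities; both are handled by substituting $n'=cn$. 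For $c=0$ the functional of the empty process contributes $0/n\to0$.

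\textbf{Step 2 (step functions).} Rescale the cube-additivity condition (5) to an arbitrary cube by translation and dilation. A dilation by $\lambda$ turns $nf$ into an intensity with the same total mass $n$, and (3) holds for every $\epsilon\le 1$, so this is admissible. Now let $g=\sum_i c_i m^d \mathbf{1}_{Q_i}$ be a probability density that is piecewise constant on the $m^d$ subcubes of a cube $Q$. Condition (5), together with Step 1 applied to each $f_i$, gives
\begin{align*}
\lim_n \tfrac1n\overline{\func}_n(ng)=\sum_i c_i\overline{\func}^*=\overline{\func}^*.
\end{align*}
I also need the intermediate fact that $\lim_n\frac1n\overline{\func}_n(n\,\mathbf{1}_Q)=\overline{\func}^*$. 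This comes from (3) with $\epsilon=1$, after possibly rescaling the support.

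\textbf{Step 3 (general good densities).} A good density $f$ has compact support, so after a translation (1) and a scaling we may place $\supp(f)$ inside a unit cube $Q$. Choose a monotone increasing sequence of dyadic step functions $f_i\uparrow f$ pointwise almost everywhere, for example $f_i=\sum_{Q'}(\inf_{Q'} f)\mathbf{1}_{Q'}$ over the dyadic cubes of level $i$. This works because a good density is continuous on the interior of its support, at least under the boundary-decay hypothesis; otherwise one uses lower semicontinuous envelopes. The $f_i$ are sub-probability step functions with masses $\nu_i\uparrow1$.

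The normalized step function $f_i/\nu_i$ is a density of the kind treated in Step 2. Combining Step 2 with the Step 1 rescaling by $\nu_i$ gives $\frac1n\overline{\func}_n(nf_i)\to \nu_i\overline{\func}^*$. Continuity (4) then yields
\begin{align*}
\limsup_n\left|\tfrac1n\overline{\func}_n(nf)-\overline{\func}^*\right| \le \limsup_n\tfrac1n|\overline{\func}_n(nf)-\overline{\func}_n(nf_i)| + (1-\nu_i)|\overline{\func}^*|,
\end{align*}
and this tends to $0$ as $i\to\infty$.

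\textbf{Main obstacle.} I expect Step 1 and the bookkeeping around it to be the delicate part. Control homogeneity decouples the control index from the intensity only through ratios. Some cubes carry vanishing mass $c_i$, so the factor from (2) must be uniform across the finitely many $c_i$ for fixed $m$; this holds because there are finitely many of them, and I would rely on that before taking $n\to\infty$. A second subtlety is checking that (4) applies even though the $f_i$ are not themselves probability densities. The condition is stated for non-negative $f_i$, so this should be fine.
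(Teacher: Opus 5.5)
The paper does not prove this statement; it quotes it from \cite{univ_euclidean}, so your argument has to rest on (1)--(5) exactly as printed. Your outline is the natural one: uniform cubes from (3) plus translation, masses $c<1$ via (2), grid step densities via (5), and general $f$ via (4). Steps 1--2 work for cubes inside a rigid copy of $[0,1]^d$.

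\textbf{The main gap is the dilation.} You use it in Step 2 (``rescale (5) to an arbitrary cube'') and in Step 3 (``after a translation and a scaling we may place $\supp(f)$ inside a unit cube''). Nothing in the hypotheses licenses it:
\begin{itemize}
\item rigidity (1) allows only translations and rotations;
\item $\func_n$ is not assumed dilation-invariant, and the persistence functional is not, because of the cutoff $\rmax_n$;
\item the $\epsilon$-independence in (3) concerns uniform cubes of side at most $1$ only. It says nothing about a dilated non-uniform intensity, nor about (5) on a cube of side larger than $1$.
\end{itemize}

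\textbf{No argument can close this step from (1)--(5) as stated.} Take
\[
\func_n(\ptx)=|\ptx|\cdot\mathbf{1}\big[\operatorname{diam}(\ptx)\le\sqrt d\,\big].
\]
\begin{itemize}
\item It is rigid and independent of $n$, so (1) holds and (2) holds trivially.
\item It gives $\overline{\func}_n(ng)=n\int g$ whenever $g$ is supported in a set of diameter at most $\sqrt d$. Hence (3) and (5) hold with $\overline{\func}^*=1$.
\item It satisfies (4). If the support of $f$ has diameter at most $\sqrt d$, the difference is $n(1-\int f_i)$. Otherwise both terms are $o(n)$ for large $i$, since two far-apart regions of positive mass are both hit with overwhelming probability.
\end{itemize}
Yet the good density $f=2^{-d}\mathbf{1}_{[0,2]^d}$ gives $\frac1n\overline{\func}_n(nf)\to0\neq1$. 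You therefore need one of two fixes:
\begin{itemize}
\item restrict to densities whose support lies in a rigid copy of $[0,1]^d$; then no dilation is needed, since the level-$i$ dyadic cubes are exactly the $m=2^i$ subdivision in (5);
\item or add a hypothesis describing how $\overline{\func}_n$ transforms under dilations. For persistence, a dilation only rescales the cutoff $\rmax_n$, which is the kind of change of control parameter that (2) is designed to absorb.
\end{itemize}

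\textbf{Step 3 also has a gap.} A good density need not be continuous: the definition only imposes two-sided bounds on a measurable $f$. The lower semicontinuous envelope can differ from $f$ on a set of positive measure, and then (4) does not apply. For example, let $C\subset[0,1]^d$ be closed, nowhere dense and of positive measure, and let $f\propto 1+\mathbf{1}_C$. Every cube with nonempty interior meets the complement of $C$ in positive measure. So any step function on cubes lying a.e.\ below $f$ is bounded by the smaller value of $f$, and no increasing sequence of step functions converges to $f$ on $C$. As written, Step 3 covers only densities that are a.e.\ increasing limits of grid step functions, for instance bounded a.e.-continuous ones, where your dyadic lower sums do converge a.e.

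\textbf{Step 2 relies on a reading of (5) that differs from the printed one.} You use (5) in the form $\overline{\func}_n(ng)\approx\sum_i\overline{\func}_n(nf_i)$ with $g=\sum_i f_i$. That is evidently the intended condition. As printed, however, (5) has $f=\mathbf{1}_Q$ on the left and then gives no information about $g$. Your remark about needing $\lim_n\frac1n\overline{\func}_n(n\mathbf{1}_Q)$ does not supply that missing information.
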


\begin{remark}
    To derive Theorem \ref{thm:univ_euclidean}, one applies Theorem \ref{thm:univ_euclidean_general} for each choice of $\alpha \in (1, \infty)$:
    \begin{align*}
        \func_n(\ptx) = \big| \Pi_k(\filt'_\bullet(\ptx)) \cap [\alpha, \infty) \big|,
    \end{align*}
    where $\filt_\bullet'$ is the curtailed filtration that grows the filtration radius up to $\rmax_n$.
\end{remark}

\medskip
\subsection{The case of triangulable spaces}

We now explain how the universality theorems from \cite{univ_euclidean} (Theorems \ref{thm:univ_euclidean}, \ref{thm:univ_euclidean_general}) extend to a class of spaces that we call \textit{admissible triangulable spaces}. This means that we assume compactness and exclude cusp singularities, and we define this using an angle condition on normal cones. 

\medskip
\noindent \paragraph{Admissible triangulations.} Let $\sigma$ be an affine simplex and let $\phi: \sigma \hookrightarrow \RR^D$ be a $C^2$ embedding, whose image we denote by $\tilde{\sigma} = \phi(\sigma)$. Given a point $x \in \partial \tilde{\sigma}$, let $\tilde{\sigma}_x$ denote the smallest closed facet of $\tilde{\sigma}$ that contains $x$. The \textit{normal inward cone} $\nincone_x \tilde \sigma$ is defined as the set of all $v \in \RR^D$ that is perpendicular to $\tilde \sigma_x$ and also $x + \epsilon v \in \tilde{\sigma}$ some $\epsilon > 0$. The \textit{strata separation angle} $\Theta_\phi$ is defined as the following infimum, taken over $d$-simplices $\sigma_1 \neq \sigma_2 \in K$ and interior points $x \in (\sigma_1 \cap \sigma_2)^\circ$:
\begin{align} \label{eqn:separation_angle}
    \Theta_\phi \eqdef 
    \inf_{\sigma_1, \sigma_2, x} \dist_{\SS} \big( \nincone_{x} \tilde \sigma_1, \nincone_{x} \tilde \sigma_2 \big),
\end{align}
where $\dist_{\SS}(A_1, A_2)$ is the spherical distance between the sets $A_1 \cap \SS^{D-1}$ and $A_2 \cap \SS^{D-1}$. Finally, we say that a $C^2$-triangulation $\phi: K \to M \subset \RR^D$ is \textit{admissible} if $M$ is compact, $K$ is a purely $d$-dimensional simplicial complex, and satisfies $\Theta_\phi > 0$.

\begin{thm}[Universality of persistence on triangulable spaces] \label{thm:A}
    Let $M \subset \RR^D$ be an admissibly triangulable space, let $f: M \rightarrow \RR_{\ge 0}$ be a good density, and let $\filt$ be either the Vietoris-Rips or \v Cech filtration. Then the following holds independently of $M$ and $f$:
    \begin{gather*}
        \lim_{n \rightarrow \infty} \frac 1n \overline{\Pi}_{k, \filt} ( nf ) = \overline{\Pi}_{k,\filt,d}^\univstar,
    \end{gather*}
    where $\overline{\Pi}_{k,\filt,d}^\univstar$ is the same universal limit as in Theorem \ref{thm:univ_euclidean}. In particular, this result holds for any compact $C^2$-embedded manifold $M$.
\end{thm}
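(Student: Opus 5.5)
Following the Remark after Theorem \ref{thm:univ_euclidean_general}, I would fix $\alpha \in (1,\infty)$ and take $\func_n(\ptx) = |\Pi_k(\filt'_\bullet(\ptx)) \cap [\alpha,\infty)|$. Since the sets $[\alpha,\infty)$ determine a measure on $[1,\infty)$, it suffices to show
\[
\lim_{n \to \infty} \tfrac1n \overline{\func}_n(nf) = \overline{\func}^\univstar
\]
for this functional, with $f$ a good density on $M$. The persistence functional already satisfies conditions (1)--(5) in $\RR^d$ by \cite{univ_euclidean}. The strategy is therefore to transfer the Euclidean limit to $M$ simplex by simplex through the triangulation $\phi: K \to M$.

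\textbf{Step 1 (decomposition).} I would write $M = \bigcup_j \tilde\sigma_j$ with $\tilde\sigma_j = \phi(\sigma_j)$, and split $f = \sum_j f\mathbf{1}_{\tilde\sigma_j^\circ}$. The lower-dimensional skeleton has measure zero, so it carries none of $f$. I would then show additivity,
\[
\tfrac1n\Big|\overline{\func}_n(nf) - \sum_j \overline{\func}_n(nf\mathbf{1}_{\tilde\sigma_j})\Big| \to 0.
\]
The argument is localization. Features counted by $\func_n$ have radius at most $\rmax_n \to 0$. Those whose vertex sets meet two different $d$-simplices lie within $O(\rmax_n)$ of the $(d-1)$-skeleton, and I would bound their expected number by $o(n)$. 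This is where $\Theta_\phi > 0$ enters. The positive separation angle implies that points of different $d$-simplices at ambient distance $r$ lie within $O(r/\sin\Theta_\phi)$ of the common face. So an ambient ball of radius $\rmax_n$ meets only an $O(\rmax_n)$-neighbourhood of the shared strata. The expected count of $k$-cycles supported in such a collar is at most a constant times $n$ times the $f$-mass of the collar, and that mass tends to $0$. I would also need that cross-simplex interference does not destroy or create long-ratio features in the interior, which again reduces to counting cycles near the skeleton.

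\textbf{Step 2 (single curved simplex to flat simplex).} On each $\tilde\sigma_j$, I would refine $\sigma_j$ by the Freudenthal--Kuhn (edgewise) subdivision at mesh $1/m$. This gives simplices of uniformly bounded thickness $\thick$ and S-width comparable to the mesh. On each small simplex $\tau$, I would compare $\phi$ with its affine interpolation $\phi_\tau$. The $C^2$ bound gives $\|\phi - \phi_\tau\| = O(m^{-2})$ and $D\phi = D\phi_\tau + O(m^{-1})$, and the thickness bound keeps $D\phi_\tau$ uniformly nondegenerate. After rescaling by $\sqrt[d]{n}$, the point configurations on $\phi(\tau)$ and on $\phi_\tau(\tau)$ differ by a map that is bi-Lipschitz with constant $1+O(m^{-1})$. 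Ratios $d/b$ therefore move by a factor $1+O(m^{-1})$. Using rigidity (1), the flat simplex $\phi_\tau(\tau)$ is isometric to a simplex in $\RR^d$, so the pushed-forward density becomes a density on $\RR^d$. I would then apply Theorem \ref{thm:univ_euclidean_general}, or Theorem \ref{thm:univ_euclidean} together with its additivity, to get a limit of $\overline{\func}^\univstar$ times the mass. Letting $m \to \infty$ and using continuity of the limit measure at $\alpha$ (approximating thresholds $\alpha(1\pm O(m^{-1}))$) removes the distortion. Summing over all $\tau$ and $j$, the masses add up to $1$. Step 1 is reused to glue the small simplices, since FK subdivisions meet at positive angles in the flat setting. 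For the intrinsic metric $\dist_M$, Lemma \ref{lem:geodesic_control} (applied within each smooth piece) shows it agrees with the Euclidean distance up to a factor $1+O(r)$ at scale $r \le \rmax_n$, which is again absorbed.

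\textbf{Main obstacle.} The hard step is the interference estimate in Step 1. One must show that cycles whose birth or death depends on points in neighbouring simplices contribute $o(n)$. This requires a uniform bound on the expected number of persistence features in a thin collar, using a Poisson moment bound on $k$-cycles per unit volume at scale $n^{-1/d}$. The angle condition must also be used in all codimensions, including higher-codimension strata where several simplices meet. I would first try to prove that an $r$-neighbourhood of the $(d-1)$-skeleton in $M$ has $f$-mass $O(r)$, uniformly, using $\Theta_\phi > 0$ and compactness. I would then bound collar features by $n$ times the collar mass, with $r = C\rmax_n$ and $C$ depending on $\Theta_\phi$. Good densities vanishing at the boundary are handled by the continuity condition (4), truncating $f$ to its region $f \ge \epsilon$.
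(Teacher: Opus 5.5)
Your overall architecture matches the paper's, in four parts:
\begin{itemize}
\item a split across the $d$-simplices using $\Theta_\phi>0$ (Proposition \ref{prop:C});
\item FK-subdivision plus the secant map, giving distortion $1+O(1/\ell)$ (Proposition \ref{prop:B});
\item conversion of metric distortion into a count error via interleaving and continuity of the limit at $\alpha$ (Lemmas \ref{lem:interleaving}, \ref{prop:metric_perturbation_persistence});
\item the Euclidean theorem applied on each flat simplex.
\end{itemize}
The gap is in Step 1, at the point you yourself flag as the main obstacle. You claim the interference changes the count by at most a constant times $n$ times the $f$-mass of an $O(\rmax_n)$-collar. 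Nothing in your proposal justifies this. Persistence features are not localized: a single simplex joining points of two neighbouring $d$-simplices can create or kill a $k$-class whose cycle runs deep into the interiors. So ``counting cycles near the skeleton'' is not a valid reduction.

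The paper's mechanism is different and works with simplices, not cycles. Proposition \ref{prop:C} gives the sandwich $\wtilde{\filt}_r(\ptx)\subseteq\filt_r(\ptx)\subseteq\wtilde{\filt}_r(\ptx)\cup S(\ptx,r)$. Lemma \ref{lem:delete_one_simplex} then says each extra simplex of dimension $k$ or $k+1$ changes $\Pi_k(\cdot)(\alpha)$ by at most one. Campbell--Mecke bounds the expected number of such simplices by $c\,n^{k+1}\rmax_n^{dk+1}(1+n\rmax_n^d)$.

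This count exceeds your $n\cdot O(\rmax_n)$ by the factor $(n\rmax_n^d)^k(1+n\rmax_n^d)\to\infty$, because cross simplices enter at every scale up to $\rmax_n\gg n^{-1/d}$. Divided by $n$, it is $o(1)$ only under an extra upper growth condition on the cutoff, $n\rmax_n^d=o(n^{1/(d^2+d+1)})$ (Lemma \ref{prop:sigma_control} and the Remark after Theorem \ref{thm:A}). With only $\rmax_n\to0$ and $n\rmax_n^d\to\infty$, the estimate fails to be $o(n)$; for example, $\rmax_n=n^{-1/(2d)}$ makes it grow like a positive power of $n$.

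You therefore have two options:
\begin{itemize}
\item adopt this restriction on $\rmax_n$ and replace your collar-feature count with the sandwich/deletion/Campbell--Mecke argument; or
\item supply a genuinely new localization lemma showing the persistence count changes by $O(n\cdot\text{collar mass})$. Your proposal asserts this but does not prove it.
\end{itemize}
The same simplex-counting argument, applied to the subdivided triangulation (the paper's safe-subdivision step), is also what justifies gluing the small FK simplices in your Step 2.
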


\begin{remark}
    This result requires the radius cutoff $\rmax_n$ to additionally satisfy $n\rmax_n^d = o(n^{1/(d^2+d+1)})$ (see Lemma \ref{prop:sigma_control}). This choice of $\rmax_n$ is mainly technical and carries no significance. In particular, in  \cite{univ_euclidean,hiraoka_shirai_trinh} it is shown that for the transient cycles we study here, the vast majority of them exist inside the thermodynamic limit ($n r^d = O(1)$), and 
    the number of cycles generated at larger radii is negligible. Therefore, as long as $n\rho_n^d\to \infty$ the results should remain the same. Additionally, it is possible to derive an analogue of Theorem \ref{thm:A} and a law of large numbers if we use a binomial point process instead of a Poisson point process. As this is a matter of additional technical steps, we do not include this result in this paper.
\end{remark}

Theorem \ref{thm:A} is a corollary of a result that holds for more general functionals. In addition to the conditions (1)--(5) stated in the case of Euclidean space, we need two more conditions for the case of triangulable spaces. One condition is a stricter version of the cube additivity (and thus replaces it), and another is on perturbations of the Euclidean metric. In the conditions below, we will use the following notation to specify the dependence of a functional $\func$ on a metric function $\dist$,
\begin{align} \label{eqn:functional_metric_dependence}
    \func(\ptx; \dist) \>\>\text{ and }\>\> \overline{\func}(nf; \dist) = \EE\big[ \func(\pointprocess_{nf}; \dist) \big].
\end{align}

\begin{enumerate}[label=(\arabic*),start=6]
    \item \underline{Simplex additivity.} The following holds for any admissible triangulation $\phi: K \rightarrow M$ and a probability density $f: M \rightarrow \RR_{\ge 0}$:
    \begin{align} \label{eqn:simplex_additivity1}
        \lim_{n \rightarrow \infty} \frac1n \bigg| \overline{\func}_n(nf) - 
        \sum_{\dim \sigma= d} \overline{\func}_n (nf|_{\phi(\sigma)}) \bigg| = 0.
    \end{align}
    \item \underline{Metric stability.} For every affine simplex $\sigma$ equipped with the Euclidean metric $\dist_\euclidean$ and a probability density $f: \sigma \to \RR_{\ge 0}$, there is a function $\mathcal{E}$ such that $\lim_{t \downarrow 0} \mathcal{E}(t) = 0$ and the following holds for any metric $\dist$ on $\sigma$:
    \begin{align} \label{eqn:metric_stability}
        \limsup_{n \rightarrow \infty} \frac1n \big| \overline{\func}_n(nf; \dist) - \overline{\func}_n (nf; \dist_\euclidean)  \big| \le \mathcal{E}(\Delta(\dist, \dist_\euclidean)),
    \end{align}
    where $\Delta(\dist_1, \dist_2)$ is the infimum of all $\delta>0$ for which the following holds for all $x, y \in \sigma$:
    \begin{align} \label{eqn:metric_distortion}
        (1+\delta)^{-1} \dist_1(x,y) \le \dist_2(x,y) \le (1+\delta) \dist_1(x,y).
    \end{align}
\end{enumerate}

We now state our second main result, which is a generalization of Theorem \ref{thm:A} that holds for well-behaved scale-invariant functionals.

\begin{thm}[Universality of general functionals over triangulable spaces] \label{thm:A_general}
    Let $M \subset \RR^D$ be an admissibly triangulable space and let $f: M \rightarrow \RR_{\ge 0}$ be a good density. Let $\func_n$ be a functional that satisfies the conditions (1)--(7). Then there exists a real number $\overline{\func}^*$ so that the  following holds independently of $M$ and $f$:
    \begin{align*} 
        \lim_{n \rightarrow \infty} \frac1n \overline{\func}_n (nf) = \overline{\func}^*.
    \end{align*}
\end{thm}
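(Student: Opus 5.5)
The proof reduces the statement to Theorem \ref{thm:univ_euclidean_general} in three steps: split the density over top-dimensional simplices, flatten each curved simplex to an affine one, and then apply the Euclidean result on each affine simplex. We take $\overline{\func}^*$ to be the constant produced by condition (3), which is the Euclidean universal constant.

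\emph{Step 1: reduction to a single curved simplex.} Fix an admissible triangulation $\phi: K \to M$ and write $f_\sigma = f|_{\phi(\sigma)}$ and $p_\sigma = \int_{\phi(\sigma)} f$. By simplex additivity (6),
\begin{align*}
\lim_n \tfrac1n \Big| \overline{\func}_n(nf) - \sum_{\dim\sigma=d} \overline{\func}_n(nf_\sigma) \Big| = 0 .
\end{align*}
It therefore suffices to show $\frac1n \overline{\func}_n(nf_\sigma) \to p_\sigma \overline{\func}^*$ for each $d$-simplex $\sigma$, since summing gives $\overline{\func}^* \sum_\sigma p_\sigma = \overline{\func}^*$. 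If $p_\sigma > 0$, we write $f_\sigma = p_\sigma g_\sigma$ with $g_\sigma$ a probability density. Control homogeneity (2) then lets us replace $\overline{\func}_n(n p_\sigma g_\sigma)$ by $\overline{\func}_{np_\sigma}(np_\sigma g_\sigma)$ up to a factor tending to $1$. So we only need the normalized statement for a probability density on one curved simplex $\tilde\sigma = \phi(\sigma)$.

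\emph{Step 2: transfer to an affine simplex.} Pull $g_\sigma$ back through $\phi$ and push it forward to the affine simplex $\sigma$ with the Jacobian correction. The pulled-back density is again good, because $\phi$ is a $C^2$ diffeomorphism onto its image with Jacobian bounded above and below. The Poisson process with intensity $ng_\sigma$ on $\tilde\sigma$ is then the image under $\phi$ of a Poisson process on $\sigma$ with intensity $n\hat g$. The metric on $\tilde\sigma$ (intrinsic or Euclidean) pulls back to a metric $\dist_\phi$ on $\sigma$, but $\dist_\phi$ is not uniformly close to $\dist_\euclidean$, since $d\phi$ varies over the simplex.

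To handle this, subdivide $\sigma$ by iterated Freudenthal--Kuhn (edgewise) subdivision into pieces $\tau$ of diameter $\to 0$. The FK construction produces finitely many similarity classes of simplices, so their thickness is uniformly bounded below. On each piece $\tau$, compose with the affine map $A_\tau$ given by the linearization of $\phi$ at a vertex of $\tau$. Using Lemma \ref{lem:geodesic_control} and the $C^2$ bound on $\phi$, the distortion $\Delta(\dist_\phi, \dist_\euclidean \circ A_\tau)$ on $\tau$ is $O(\diam(\tau))$. The thickness bound is needed here so that the singular values of $d\phi$ on the image control the comparison. Applying simplex additivity (6) to the refined triangulation, and then metric stability (7) on each piece, gives
\begin{align*}
\limsup_n \tfrac1n \big|\overline{\func}_n(n\hat g;\dist_\phi) - \textstyle\sum_\tau \overline{\func}_n(n\hat g|_\tau;\dist_\euclidean\circ A_\tau)\big| \le \mathcal E(O(\text{mesh})) .
\end{align*}
Rigidity (1) removes the rigid part of $A_\tau$, so each term is a functional of a process on an affine $d$-simplex $A_\tau(\tau)\subset\RR^d$ with a good density. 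Theorem \ref{thm:univ_euclidean_general} gives the limit $\overline{\func}^*\int_\tau \hat g$ for each term. Summing over $\tau$ and letting the mesh go to zero yields the claim. The mesh-dependence of $\mathcal{E}$ requires $\mathcal{E}$ to be uniform over pieces; I would arrange this by a rescaling argument together with the finitely many FK shapes.

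\emph{Main obstacle.} The hard step is the applicability of (6) and (7) near the boundaries between pieces. This matters in two places: across singular strata of $M$, where different top simplices meet, and across the interfaces of the FK subdivision. Simplex additivity is assumed as a hypothesis, but verifying it for persistence (needed for Theorem \ref{thm:A}) requires showing that cycles straddling a codimension-one interface contribute $o(n)$. I would do this with the positive separation angle $\Theta_\phi>0$. It guarantees that a $\rmax_n$-neighbourhood of the interface has measure $O(\rmax_n)$, and that points on different sheets at intrinsic distance $\le\rmax_n$ are also Euclidean-close with comparable distances (so there are no cusps). Then the expected number of features in that neighbourhood is $O(n \rmax_n \cdot (n\rmax_n^d)^{C})$. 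This is $o(n)$ under the growth condition $n\rmax_n^d = o(n^{1/(d^2+d+1)})$, which is where I expect that exponent to come from.
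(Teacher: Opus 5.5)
Your proposal is correct at the same level of rigor as the paper, but it flattens the curved simplices differently. The paper applies (6) once, to the rectified FK triangulation $\psi_\ell$ of all of $M$. On each piece it compares the ambient metric with the flat metric of the \emph{secant} simplex spanned by $\phi(u_0),\dots,\phi(u_d)$. Controlling the secant map is what requires Proposition \ref{prop:B}, i.e.\ the S-width and thickness bounds for FK simplices; Appendix \ref{appendix: simplex_quality} shows the secant derivative blows up on thin simplices.

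You proceed differently in three ways. You apply (6) to $\phi$ first. You normalise each simplex explicitly via (2), which the paper does only implicitly through the factor $1/(n|\sigma|)$. And you flatten each piece $\tau$ by the tangent linearisation $A_\tau$ at a vertex $v$. Because $\tau$ is convex, the second estimate of Lemma \ref{lem:calculus1} gives directly
\[ \big|\, \|\phi(x)-\phi(y)\| - \|\Diff_v\phi(x-y)\| \,\big| \le h_\phi \, \diam(\tau)\, \|x-y\|, \qquad x,y \in \tau, \]
so $\Delta \le h_\phi\diam(\tau)/\inf_{z}\sing_{\min}(\Diff_z\phi)$ with no quality assumption. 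Contrary to your remark, thickness plays no role in your route, and any subdivision with mesh tending to zero would do.

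Your affine pieces do not glue into a PL complex. Nothing is lost, because the proof only ever uses per-simplex comparisons (cf.\ \eqref{eqn:metric_distortion_psi_pl}). What the paper's route buys is a global bi-Lipschitz PL model of $M$ (Proposition \ref{prop:B}(1)). That is a reusable transfer tool, but this particular theorem does not need it.

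Two smaller remarks. First, your concern about the uniformity of $\mathcal{E}$ is legitimate, and it applies equally to the paper, which picks $\delta$ from a single $\mathcal{E}$ before the simplices are built. However, \emph{finitely many FK shapes plus rescaling} would not settle it. The shapes $A_\tau(\tau)$ depend on $\Diff_v\phi$, and the densities vary with $\tau$. One has to read (7) with a uniform $\mathcal{E}$, as holds for persistence (Lemma \ref{prop:metric_perturbation_persistence}).

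Second, your last paragraph concerns verifying (6) for persistence, which is not part of this statement. There, the angle condition $\Theta_\phi>0$ enters through Proposition \ref{prop:C}: Euclidean-close points on different top simplices lie within $O(\|x_1-x_2\|)$ of the skeleton. The $O(\rmax_n)$ volume bound (Lemma \ref{lem:skeleton_volume}) needs no angle condition.
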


\subsection{Geometric transfer}

We develop tools for transferring geometric statistics from Euclidean spaces to a triangulable space. First we explain a heuristic version of the strategy. Suppose that $\ptx \subset M$ is a finite subset of a triangulable space $M \subset \RR^D$ and $\phi: K \to M$ is a triangulation such that $\dist_{K}(x, y) \approx \dist_M(\phi(x), \phi(y))$. Our strategy is to derive Theorem \ref{thm:A_general} from Theorem \ref{thm:univ_euclidean_general} by the following successive approximations, and controlling the error in each step,
\begin{align} \label{eqn:heuristic_approximation}
    \func(\ptx) \approx \sum_\sigma \func(\ptx \cap \tilde{\sigma}) \approx \sum_\sigma \func(\phi^{-1}(\ptx \cap \tilde{\sigma})),
\end{align}
where $\tilde{\sigma} = \phi(\sigma)$ and $\sigma$ ranges over the maximal simplices of $K$.

\medskip
\noindent\paragraph{Step 1. Divide and conquer.} In the approximation $\func(\ptx) \approx \sum_\sigma \func(\ptx \cap \tilde \sigma)$, a key part of the error estimates is bounding the number of pairs $(x_1, x_2)$ that lie on two embedded simplices, i.e. $x_1 \in \phi(\sigma_1)$ and $x_2 \in \phi(\sigma_2)$ for two simplices $\sigma_1, \sigma_2 \in K$. We define the \textit{$\phi$-skeleton} of a triangulation $\phi: K \to M$ as the following set obtained by removing interiors of all maximal simplices $\sigma \in K$,
\begin{align} \label{eqn:phi_skeleton}
    \partial_\phi M \eqdef M \backslash (\cup_\sigma \phi(\sigma^\circ)).
\end{align}
Then a pair of points lying in two distinct embedded simplices must lie close to the skeleton.

\begin{prop} \label{prop:C}
    Let $\phi: K \rightarrow M \subset \RR^D$ be an admissible triangulation and let $\sigma_1, \sigma_2 \in K$ be distinct $d$-simplices. Given points $x_1 \in \phi(\sigma_1)$, $x_2 \in \phi(\sigma_2)$, the following holds for $i=1,2$ if $\|x_1-x_2\|$ is sufficiently small:
    \begin{align*}
        \dist_\euclidean (x_i, \partial_\phi M) \le \frac{2 \|x_1 - x_2\|}{\sin(\Theta_\phi/4)}.
    \end{align*}
\end{prop}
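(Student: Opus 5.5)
We argue by contradiction combined with compactness, reducing to a local linear-algebra statement about two tangent cones that meet at an angle at least $\Theta_\phi$. Write $\tilde\sigma_i = \phi(\sigma_i)$ and $r = \|x_1 - x_2\|$.

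First, since $\tilde\sigma_1 \cap \tilde\sigma_2 \subseteq \partial_\phi M$ (the interiors of distinct maximal simplices are disjoint), it suffices to show
\begin{align*}
    \dist_\euclidean(x_i, \tilde\sigma_1 \cap \tilde\sigma_2) \le \frac{2r}{\sin(\Theta_\phi/4)}
\end{align*}
when $\sigma_1 \cap \sigma_2 \neq \emptyset$. When $\sigma_1 \cap \sigma_2 = \emptyset$, the compact sets $\tilde\sigma_1, \tilde\sigma_2$ are disjoint, hence at positive distance. So for $r$ small enough this case cannot occur, and finiteness of $K$ makes the threshold uniform.

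Next, fix the pair $\sigma_1, \sigma_2$ and set $\tau = \sigma_1 \cap \sigma_2$, a common face with image $\tilde\tau$. Let $y \in \tilde\tau$ be a nearest point of $\tilde\tau$ to $x_1$, and write $h = \dist_\euclidean(x_1, \tilde\tau)$. We want $h \lesssim r/\sin(\Theta_\phi/4)$. Using the $C^2$ regularity of $\phi$ on a neighbourhood of each closed simplex, near $y$ each $\tilde\sigma_i$ agrees with the affine cone
\begin{align*}
    y + \tanspc_y\tilde\tau + \nincone_y\tilde\sigma_i
\end{align*}
up to an error $O(\rho^2)$ in a ball of radius $\rho$, uniformly in $y$ by compactness. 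Because $y$ is a nearest point, $x_1 - y$ is nearly perpendicular to $\tanspc_y\tilde\tau$. Hence $x_1 = y + h u_1 + O(h^2)$ with $u_1$ a unit vector close to $\nincone_y\tilde\sigma_1$.

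We then split $x_2 - y$ into its tangential and normal parts. The point $x_2$ lies within $r$ of $x_1$, so $\|x_2 - y\| \le h + r$, and we may write $x_2 = y + t + w + O((h+r)^2)$ with $t \in \tanspc_y\tilde\tau$ and $w$ in the closure of $\nincone_y\tilde\sigma_2$. Projecting $x_1 - x_2$ orthogonally to $\tanspc_y\tilde\tau$ gives $\|h u_1 - w\| \le r + O((h+r)^2)$. The angle between the direction $u_1$ (near $\nincone_y\tilde\sigma_1$) and $\nincone_y\tilde\sigma_2$ is at least roughly $\Theta_\phi$, since $y$ lies in the relative interior of some face of $\tau$ and $\Theta_\phi$ is an infimum over such points. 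Elementary planar geometry then gives $\|hu_1 - w\| \ge h\sin(\Theta_\phi/2)$ when the angle is at most $\pi/2$, and $\ge h$ otherwise. Thus
\begin{align*}
    h \sin(\Theta_\phi/2) \le r + C(h+r)^2.
\end{align*}
For small $r$ we also know $h$ is small, by a compactness argument: as $r \to 0$, limit points of $x_1$ lie in $\tilde\sigma_1 \cap \tilde\sigma_2$. Absorbing the quadratic term then yields $h \le 2r/\sin(\Theta_\phi/4)$, with room to spare. The bound for $x_2$ follows by symmetry, or from $\dist(x_2, \cdot) \le \dist(x_1, \cdot) + r$ after adjusting constants, since $\sin(\Theta_\phi/4)$ leaves a factor to spare.

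The main obstacle is making the cone approximation uniform when the nearest point $y$ approaches lower-dimensional faces of $\tau$. There the inward normal cone $\nincone_y\tilde\sigma_i$ changes discontinuously, and the angle bound is only guaranteed pointwise on relative interiors. I would handle this by replacing exact angles with the weaker $\Theta_\phi/2$ (hence the final $\Theta_\phi/4$ after further slack). I would then argue by contradiction: take sequences with $h_k/r_k \to \infty$, rescale around $y_k$ by $h_k$, and pass to a limit. The rescaled sets converge to the tangent cones at a limit point $y_\infty$, and in the limit the directions $u_1$ and the limit of $w/h$ coincide. This gives a common unit vector of the closed inward cones at $y_\infty$, which contradicts $\Theta_\phi > 0$ once one checks that the cones at the smallest face containing $y_\infty$ dominate the nearby ones.
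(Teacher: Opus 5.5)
\textbf{There is a genuine gap.} Your overall strategy is the paper's. Both reduce to bounding the distance to $\tilde\sigma_1\cap\tilde\sigma_2$, take a foot point on the intersection, split the other point's displacement into a tangential part and a normal-inward-cone part, and use the angle $\ge\Theta_\phi$. The paper controls curvature through the reach and Lemma~\ref{lem:variational1}, where you use Taylor expansion and orthogonal projection. Your argument is fine when the foot point $y$ lies in the relative interior of $\tilde\tau$. The gap is the case you flag at the end, and it is not a technicality.

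\textbf{The reduced statement is false.} The claim $\dist_\euclidean(x_1,\tilde\sigma_1\cap\tilde\sigma_2)\le 2r/\sin(\Theta_\phi/4)$ fails in the following flat example.
Take two triangles in $\RR^3$ sharing the edge $[0,\mathbf{e}_1]$, with $\sigma_1\subset\{z=0,\ y\ge 0\}$ and $\sigma_2\subset\{y=0,\ z\ge 0\}$, each with angle $179^\circ$ at $0$. Then $\Theta_\phi=\pi/2$, so $2/\sin(\Theta_\phi/4)\approx 5.2$.
Put $x_1=s(\cos 178^\circ,\sin 178^\circ,0)\in\sigma_1$, and let $x_2$ be its orthogonal projection onto the edge of $\sigma_2$ at angle $179^\circ$. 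Then $\|x_1-x_2\|\approx 0.039\,s$, while $\dist_\euclidean(x_1,\sigma_1\cap\sigma_2)=s$. The configuration is scale invariant, so taking $r$ small does not help.

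\textbf{Why your step fails.} In this example the nearest point $y$ is the vertex $0$. There $x_1-y$ is perpendicular only to the tangent space of that smaller face, and it has a large component along $\tanspc\tilde\tau$. After projecting orthogonally to $\tanspc\tilde\tau$, the normal part of $hu_1$ is tiny. The angle between $u_1$ and the direction of $x_2$ is about $2.2^\circ$, so $\|hu_1-w\|\ge h\sin(\Theta_\phi/2)$ fails.

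\textbf{The blow-up does not repair it.} At such a $y_\infty$ the closed inward cones of $\tilde\sigma_1$ and $\tilde\sigma_2$ always share the directions pointing into $\tilde\tau$, so a common unit vector contradicts nothing. Moreover, a contradiction from $h_k/r_k\to\infty$ can only give some inexplicit constant, never $2/\sin(\Theta_\phi/4)$.

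\textbf{The paper has the same defect.} Its step ``$\theta\ge\Theta_\phi$'' is only justified when the foot point $z$ lies in the relative interior of the intersection. So the paper's proof, which goes through the same intermediate inclusion, does not fill this hole either.

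\textbf{The missing idea.} Bound the distance from $x_1$ to the relative boundary $\partial\tilde\sigma_1\subseteq\partial_\phi M$, which is all the proposition asks for, instead of the distance to $\tilde\sigma_1\cap\tilde\sigma_2$. This only needs base points in the relative interior of $\tau$, where $\Theta_\phi$ actually says something.
In the flat case, set $h=\dist_\euclidean(x_1,\partial\sigma_1)$. The open $h$-ball about $x_1$ in $\mathrm{aff}(\sigma_1)$ lies in $\sigma_1^\circ$, so it misses $\mathrm{aff}(\tau)$, because $\sigma_1\cap\mathrm{aff}(\tau)=\tau$. Fix $a_0$ in the relative interior of $\tau$, and let $P$ be the orthogonal projection killing the directions of $\mathrm{aff}(\tau)$. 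By Lemma~\ref{lem:inward_cone_decomp}:
\begin{itemize}
\item $P(x_1-a_0)$ lies in $\nincone_{a_0}\sigma_1$ and has norm $\ge h$;
\item $P(x_2-a_0)$ lies in the closure of $\nincone_{a_0}\sigma_2$.
\end{itemize}
Hence $\|x_1-x_2\|\ge h\sin(\min(\Theta_\phi,\pi/2))\ge h\sin(\Theta_\phi/4)$. In the example above this correctly picks up the nearby edge of $\sigma_1$ at angle $179^\circ$.

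\textbf{The $C^2$ case.} Linearise at a point $p\in\tilde\tau$, possibly on a lower face. Use the limits of $\nincone_q$ as $q\to p$ through the relative interior of $\tau$; these stay $\Theta_\phi$-separated by continuity of $\Diff\phi$. To keep the curvature errors at $O(r^2)$ you first need the qualitative bound $\dist_\euclidean(x_1,\tilde\tau)=O(r)$. Your blow-up can supply this, provided the contradiction comes from the following fact: when $\Theta_\phi>0$, the tangent cones of $\tilde\sigma_1$ and $\tilde\sigma_2$ at $p$ meet exactly in the tangent cone of $\tilde\tau$.
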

This is proved in Subsection \ref{subsection: interference_control} using a careful control of the geodesic tubular neighbourhood. The next step is to show that the tube volume is small.

\begin{lem}\label{lem:skeleton_volume}
    Let $\phi: K \rightarrow M \subset \RR^D$ be a purely $d$-dimensional $C^2$ triangulation, and let $\hausmeas^d$ be the $d$-dimensional Hausdorff measure. Then for some $b_\phi^{\pm} > 0$ and small enough $r>0$,
    \begin{align*}
        b_\phi^- \cdot r \le \hausmeas^d \big( \ball ( \partial_\phi M, r) \cap M \big) \le b_\phi^+ \cdot r.
    \end{align*}
\end{lem}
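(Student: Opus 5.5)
The plan is to reduce to a single closed $d$-simplex $\sigma$ and pull everything back to $\sigma$ via $\phi$. Since $K$ is finite (as $M$ is compact) and purely $d$-dimensional, $M = \bigcup_{\dim\sigma = d} \phi(\sigma)$. Hence
\begin{align*}
\hausmeas^d\big(\ball(\partial_\phi M, r)\cap M\big) \le \sum_{\dim \sigma=d} \hausmeas^d\big(\ball(\partial_\phi M, r)\cap \phi(\sigma)\big).
\end{align*}
For the lower bound it suffices to use a single $\sigma$. Moreover, $\partial_\phi M = \bigcup_\sigma \phi(\partial\sigma)$, because every point of $\phi(\partial \sigma)$ lies outside all open images $\phi(\tau^\circ)$ ($\phi$ being a homeomorphism). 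Conversely, a point not in any $\phi(\sigma^\circ)$ lies in some $\phi(\partial\sigma)$.

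The key step is a bi-Lipschitz comparison on each simplex. Since $\phi$ is $C^2$ on a neighbourhood of the compact simplex $\sigma$ and is an embedding, $D\phi$ is injective on $\sigma$; I take this as part of what a $C^2$ triangulation means. So there are constants $0<\lambda\le\Lambda$ with $\lambda|u-v|\le\|\phi(u)-\phi(v)\|\le\Lambda|u-v|$ for $u,v\in\sigma$. The upper bound is the mean value theorem on the convex set $\sigma$. The lower bound is standard: a local bound comes from injectivity of $D\phi$ and the uniformity of the Taylor remainder, and a global bound comes from compactness plus injectivity of $\phi$.

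For the upper bound, suppose $y\in\phi(\sigma)$ with $\|y - z\|\le r$ for some $z\in\partial_\phi M$. I then need $\phi^{-1}(y)$ to be within $Cr$ of $\partial\sigma$. If $z\in\phi(\partial\sigma)$, this follows directly from the bi-Lipschitz bound. If instead $z\in\phi(\partial\tau)$ for another simplex $\tau$, I use the following fact: for $r$ small, $\ball(\phi(\partial\tau),r)\cap\phi(\sigma)\subseteq \ball(\phi(\partial\sigma), C'r)$. This holds because the compact sets $\phi(\partial \tau)\setminus \ball(\phi(\sigma), \eta)$ and $\phi(\sigma)$ are at positive distance, while points of $\phi(\partial\tau)$ near $\phi(\sigma)$ are near $\phi(\sigma\cap\tau)\subseteq\phi(\partial\sigma)$. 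Getting a linear rate $C'r$ rather than merely $o(1)$ is the main obstacle, and I expect it to need care. The cleanest fix I see is to avoid it altogether: bound the $r$-neighbourhood intersected with $\phi(\sigma)$ using Proposition \ref{prop:C}-type reasoning, or simply reduce to $\ball(\partial_\phi M,r)\cap\phi(\sigma)\subseteq\ball(\phi(\partial\sigma),r)\cup\big(\ball(\partial_\phi M, r)\cap \phi(\sigma)\cap\bigcup_{\tau\ne\sigma}\ball(\phi(\tau),r)\big)$. The second piece is then handled by Proposition \ref{prop:C}, whose statement is exactly the linear control needed when $\Theta_\phi>0$. Without admissibility, I would fall back to a Łojasiewicz-type argument, but I would first try to use the linear comparison of the $C^2$ pieces near their common face.

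Once the bound $\phi^{-1}(y)\in\{u\in\sigma: \dist(u,\partial\sigma)\le Cr\}$ is in hand, the conclusion is elementary. This Euclidean collar in $\sigma$ has $d$-volume at most $C''r$, since it is a union of $d+1$ slabs over facets of bounded area. The area formula, $\hausmeas^d(\phi(A))\le\Lambda^d\,\mathrm{vol}_d(A)$, then gives the upper bound $b^+_\phi r$. For the lower bound, fix one facet $F$ of $\sigma$ and take the slab $A=\{u\in\sigma:\dist(u,F)\le r/\Lambda\}$, whose volume is at least $c\,r$ for small $r$. Every point of $\phi(A)$ lies within $r$ of $\phi(F)\subseteq\partial_\phi M$, and $\hausmeas^d(\phi(A))\ge\lambda^d\,\mathrm{vol}_d(A)$ (the Jacobian is bounded below by $\lambda^d$), which yields $b^-_\phi r$.
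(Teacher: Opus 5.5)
Your lower bound is correct and matches the paper's argument: a slab of width $r/\Lambda$ along one facet, pushed forward with the Jacobian bounded below. The gap is in the upper bound.

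Your route needs the inclusion $\ball(\phi(\partial\tau),r)\cap\phi(\sigma)\subseteq\ball(\phi(\partial\sigma),C'r)$. This is false under the lemma's hypotheses, which do not include $\Theta_\phi>0$. Take $T_1=\mathrm{conv}\{(0,0),(1,0),(0,1)\}$ and $T_2=\mathrm{conv}\{(0,0),(1,0),(1,1)\}$ glued along their common edge, with $\phi(x,y)=(x,y,0)$ on $T_1$ and $\phi(x,y)=(x,y,y^2)$ on $T_2$. This is a $C^2$ triangulation with injective differential (here $\Theta_\phi=0$). Yet the point $(s,s,0)$ with $s=\sqrt r$ lies within $r$ of $(s,s,s^2)\in\phi(\partial T_2)\subseteq\partial_\phi M$, while its distance to $\phi(\partial T_1)$ is $\sqrt r$. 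So no ``linear comparison near the common face'' exists when pieces meet tangentially. A \L{}ojasiewicz-type inclusion would at best give a collar of width $r^{\alpha}$ with $\alpha<1$, hence only an $O(r^{\alpha})$ bound.

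Appealing to Proposition \ref{prop:C} imports admissibility. Even then, its \emph{statement} only bounds $\dist_\euclidean(x_i,\partial_\phi M)$, which you already know is at most $r$. What you actually need is the inclusion $\ball(\tilde\sigma_2,r)\cap\tilde\sigma_1\subseteq\ball\big(\tilde\sigma_1\cap\tilde\sigma_2,(1+\epsilon)r/\sin(\Theta_\phi/4)\big)$ established inside its proof. With that, your argument covers the admissible case used in Lemma \ref{prop:sigma_control}, but not the lemma as stated. You are right that this localization is the delicate point; the paper's two-line proof makes the same per-simplex reduction without comment.

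The missing idea is that you do not need to localize inside a single simplex at all. Your two-sided bi-Lipschitz constants $\lambda,\Lambda$ already give the upper bound by a covering argument:
\begin{itemize}
\item Each facet of each $d$-simplex is covered by $O(r^{1-d})$ parameter balls of radius $r/\Lambda$. Hence $\partial_\phi M\subseteq\bigcup_{i=1}^N\ball(p_i,r)$ with $N\le Cr^{1-d}$, and therefore $\ball(\partial_\phi M,r)\subseteq\bigcup_i\ball(p_i,2r)$.
\item For each $d$-simplex $\sigma$, the set $\sigma\cap\phi^{-1}(\ball(p_i,2r))$ has diameter at most $4r/\lambda$. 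So $\hausmeas^d\big(\ball(p_i,2r)\cap\phi(\sigma)\big)\le\Lambda^d\omega_d(4r/\lambda)^d$, where $\omega_d$ is the volume of the unit $d$-ball.
\item Summing over $i$ and $\sigma$ gives $\hausmeas^d\big(\ball(\partial_\phi M,r)\cap M\big)\le C'r$, with no angle condition.
\end{itemize}
As a check, in the example above the offending region has area only $O(r^{3/2})$.
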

\begin{proof}
    It suffices to show this per each $d$-dimensional simplex $\sigma \in K$. First, note that the inequality is obvious for a linear embedding $\phi: \sigma \rightarrow \RR^D$. Secondly, the area formula of geometric measure theory \cite{federer}, together with the existence of a lower and upper bound for the Jacobian of $\phi$ implies the claimed inequality.
\end{proof}

\medskip
\noindent \paragraph{Step 2. Approximation by subdivision.} The second step of the approximation \eqref{eqn:heuristic_approximation} is $\func(\ptx \cap \tilde \sigma) \approx \func(\phi^{-1}(\ptx \cap \tilde \sigma))$, and we need to control the error caused by flattening a curved simplex $\tilde \sigma = \phi(\sigma)$ to the affine simplex $\sigma$. This error is small if each of our simplex $\sigma \in K$ is small and does not become vanishingly thin. That means we must construct approximate triangulations with good metric and quality properties. As it turns out, assuring the simplex quality is necessary in order to have good metric properties, and we explain  in Appendix \ref{appendix: simplex_quality}.

Given a triangulation $\phi: K \to M$, we will construct a sequence of triangulations $\psi_\l: \wtilde{K}_\l \to M$ such that $\dist_M(x, y) \approx \dist_{\mathsf{PL}}(\psi_\l^{-1}(x), \psi_\l^{-1}(y))$ for every pair of points $x, y \in M$. Here $\dist_{\mathsf{PL}}$ is the piecewise-linear distance on the simplicial complex $\wtilde{K}_\l$. Using $\psi_\l$, we can transfer a Poisson point process from $M$ to the simplicial complex $\wtilde{K}_\l$, and then apply Theorem \ref{thm:univ_euclidean_general} on each flat simplex. We note that Munkres gave a similar construction and result; see \cite{munkres_differential}. His approach is based on a different subdivision  based on cubical slabs and does not provide precise bounds. Translating this result into the metric approximation required here is non-trivial. Consequently, an alternate version of his result (discovered independently) containing explicit metric distortion bounds are included here for completeness.

We use the \textit{Freudenthal-Kuhn} (FK) subdivision of a simplicial complex, which is also known as \textit{edgewise subdivision} \cite{edelsbrunner_grayson}. Unlike the barycentric subdivision, simplices in this subdivision do not become infinitely thin. Instead, there is a nonzero lower bound for their qualities, which is responsible for good metric properties. Postponing the details of the FK-triangulation construction for now, let us denote the subdivision map by $\subdiv_\l: K \rightarrow \FK_\l(K)$. A short summary of FK-subdivision is given in Subsection \ref{subsection: fkt_short}, and a full account is given in Appendix \ref{appendix: fkt_theory}.

We further apply a \textit{rectification} to eliminate first-order metric distortion left unaddressed by the FK-subdivision. Given a triangulation $\phi: K \rightarrow M$, its \textit{secant map} is defined as the unique piecewise-linear map $\phi_\triangle$ satisfying $\phi_\triangle(u_i) = \phi(u_i)$ for every simplex $\sigma \in K$ with vertices $(u_0, \ldots u_d)$. Then the \textit{rectification} of $\phi$ is defined as the composition $\psi \eqdef \phi \circ \phi_\triangle^{-1} $. Let us take the rectification of the map composed with the FK-subdivision $\phi \circ \subdiv_\l^{-1}$, and denote the resulting map $\psi_\l: \wtilde{K}_\l \to M$. This is summarized on the following commutative diagram:
\begin{center}
    \begin{tikzcd}
        K_\ell \arrow[r, "\subdiv_\ell^{-1}"] \arrow[rdr, "\phi_\l^\triangle"'] & K \arrow[r, "\phi"] & M \\
        & & \widetilde{K}_\l \arrow[u,  "\psi_\l"']
    \end{tikzcd}
\end{center}

\begin{prop} \label{prop:B}
    Let $\phi: K \to M \subset \RR^D$ be a $C^2$-triangulation, where $K$ is purely $d$-dimensional and $M$ is compact. Let $\subdiv_\l: K \to K_\l$ be the $\l$-th FK-subdivision with respect to some vertex order, and let $\psi_\l: \wtilde{K}_\l \to M$ be the rectification of $\phi \circ \subdiv_\l^{-1}$. Then for all sufficiently large $\ell$, the following assertions hold:
    \begin{enumerate}
        \item For every $x \neq y \in \wtilde{K}_\l$,
        \begin{align*}
            \bigg( 1 + \frac{\gamma}{\ell} \bigg)^{-1} \le \frac{\dist_M (\psi_\l(x), \psi_\l (y))}{\dist_{\mathsf{PL}} (x, y)}  \le \bigg( 1 + \frac{\gamma}{\ell} \bigg),
        \end{align*}
        where $\dist_{\mathsf{PL}}$ is the piecewise-linear metric on $\wtilde{K}_\l$. The constant $\gamma$ is given by:
            \[ \gamma = 3\sqrt{d} \cdot  \frac{\sup_{x \in K} \| \on{Hess}_x\phi \| }{\inf_{x \in K} \sing_{\min}(\Diff_x \phi)} \cdot \frac{\max_{\sigma \in K} \diam(\sigma)^2}{\min_{\sigma \in K} \sthick(\sigma)}. \]
        \item For every $d$-simplex $\sigma \in \wtilde{K}_\ell$, the diameter of $\sigma$ is at most $c_1 / \l$ and its thickness is at least $c_2$ regardless of $\l$. The constants $c_1,c_2$ are given in \eqref{eqn:scr_dt}, \eqref{eqn:constc012}.
    \end{enumerate}
\end{prop}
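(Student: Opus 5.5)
We prove part (2) first, since part (1) relies on it. By the FK-subdivision theory of Appendix \ref{appendix: fkt_theory}, each $d$-simplex $\tau \in K_\l$ lies in some $\sigma \in K$. It is the image under an affine map $A_\sigma$ of one of finitely many model simplices: the Kuhn simplices of the cube $[0,1/\l]^d$, up to translation. Hence $\diam(\tau) \le \diam(\sigma)\sqrt d/\l$ up to constants, and $\thick(\tau)$ is bounded below by a constant depending only on $d$ and the condition number of $A_\sigma$. Both bounds are independent of $\l$, because the model simplices are scaled copies of each other.

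Next we pass from $\tau$ to its rectified copy $\tilde\tau = \phi_\l^\triangle(\tau)$, whose vertices are $\phi(u_i)$. By Taylor expansion,
\[ \phi(u_i) - \phi(u_0) = \Diff_{u_0}\phi\,(u_i - u_0) + O\big(\|\on{Hess}\phi\|\,\diam(\tau)^2\big). \]
So the displacement matrix $A_{\tilde\tau,0}$ equals $\Diff_{u_0}\phi\cdot A_{\tau,0}$ plus an error of order $1/\l^2$. The main term has minimum singular value at least $\sing_{\min}(\Diff\phi)\,\sthick(\tau)$, and $\sthick(\tau) \gtrsim \sthick(\sigma)/\l$. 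Weyl's inequality for singular values then gives $\sthick(\tilde\tau) \gtrsim 1/\l$ for large $\l$. Similarly, $\diam(\tilde\tau) \le c_1/\l$, and the thickness stays bounded below by $c_2$. This proves (2).

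For part (1), the key local estimate is on each $\tilde\tau$. The map $\psi_\l|_{\tilde\tau} = \phi \circ \subdiv_\l^{-1} \circ (\phi_\l^\triangle)^{-1}$ agrees with the affine interpolation at the vertices. The difference $g = \psi_\l - \mathrm{id}$ on $\tilde\tau$ (or more precisely, the derivative comparison) is controlled as follows. Write $\psi_\l = \phi \circ L^{-1}$ with $L$ affine on $\tau$. Then $\Diff\psi_\l = \Diff_x\phi \cdot L^{-1}$, where $L = \phi_\triangle|_\tau$ has linear part $A_{\tilde\tau,0}A_{\tau,0}^{-1}$. This linear part equals $\Diff_{u_0}\phi + E$ with
\[ \|E\| \lesssim \|\on{Hess}\phi\|\,\diam(\tau)^2/\sthick(\tau) \lesssim \|\on{Hess}\phi\|\,\diam(\sigma)^2/(\l\,\sthick(\sigma)). \]
Also, $\Diff_x\phi - \Diff_{u_0}\phi = O(\|\on{Hess}\phi\|/\l)$. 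Combining these, $\Diff\psi_\l$ restricted to the tangent plane of $\tilde\tau$ satisfies
\[ \|\Diff\psi_\l\,v\| = (1 + O(\gamma/\l))\,\|v\|, \]
where the relative error is normalized by $\sing_{\min}(\Diff\phi)$. This is exactly the shape of $\gamma$; the $3\sqrt d$ constant comes from bookkeeping the diameter and norm conversions.

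Globalizing is where I expect the main obstacle, since the two metrics are path metrics on different spaces. For $\dist_M(\psi_\l x,\psi_\l y) \le (1+\gamma/\l)\dist_{\mathsf{PL}}(x,y)$, take a PL geodesic, which is a concatenation of segments inside simplices. Its image under $\psi_\l$ is a curve in $M$, and its length is at most $(1+\gamma/\l)$ times the PL length by the pointwise derivative bound. Here $\dist_M$ is the intrinsic (length) metric. For the reverse inequality, take a near-minimizing rectifiable curve in $M$ and pull it back by $\psi_\l^{-1}$, which is bi-Lipschitz on each $\phi(\sigma)$. The curve may cross strata infinitely often, so I would first approximate it by curves meeting each closed simplex in finitely many pieces. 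Then the length bound for $\Diff\psi_\l^{-1}$ applies piecewise, and lengths add. Care is needed that the derivative bound holds up to the boundary of each simplex; this follows from $\phi$ being $C^2$ on a neighbourhood of each closed simplex. It also matters that the constants are uniform over the finitely many $\sigma \in K$. Finally, "sufficiently large $\l$" is used to absorb the second-order terms and to keep the Weyl perturbation below half the main singular value.
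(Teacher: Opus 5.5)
Your proposal is correct in substance. For part (1) it is essentially the paper's argument (Proposition~\ref{prop:metric_distortion_rectification}). On each sub-simplex you compare $\Diff_x\phi$ with the linear part of the secant map, viewing both as perturbations of $\Diff\phi$ at a reference point (you expand at a vertex, the paper at the centroid). You then control the inverse edge matrix by the S-width and integrate the pointwise ratio along curves.

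Your treatment of the reverse inequality is more careful than the paper's appeal to symmetry. The approximation you have in mind can be made concrete by a last-exit argument over a finite closed cover of $M$ by small curved pieces, each of which is entered at most once along the curve, and then letting the piece size tend to zero.

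Part (2) is where you genuinely depart from the paper. The paper's route has three steps:
\begin{itemize}
\item it compares the rectified simplex $\hat\sigma$ with the linearised simplex $\bar\sigma=\phi(x)+\ell^{-1}\Diff_x\phi(\tau^R)$ via an $O(\ell^{-2})$ edge-length perturbation;
\item it defines $\mathscr D^\pm_\phi,\mathscr T^\pm_\phi$ by compactness over $Q(\sigma_0)\times K\times O(d)$;
\item it transfers thickness using the quality lemma (Lemma 25) of \cite{riemannian_simplices}.
\end{itemize}
You instead perturb the displacement matrix directly and use Weyl's inequality to get $\sthick(\tilde\tau)\gtrsim 1/\ell$.

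The step you leave implicit is the passage from S-width to thickness, and it does hold. Let $A$ be the edge matrix at the vertex realising $\sthick$. The altitude at any other vertex is $\min_{x_k=1}\|Ax\|\ge \sing_{\min}(A)$. The altitude at the base vertex is $\min_{\sum_k c_k=1}\|Ac\|\ge \sing_{\min}(A)/\sqrt d$. Hence $\thick\ge \sthick/(\sqrt d\,\diam)$.

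Your route is self-contained, avoids the external lemma and the compactness constants, and reuses the Taylor estimate from part (1). The paper's route produces the specific constants in \eqref{eqn:constc012}.

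One caveat: your ``bookkeeping'' does not actually deliver the stated $\gamma$, nor the exact $c_1,c_2$. For $d\ge 3$ the FK sub-simplices are affine images of Kuhn simplices rather than scaled copies of $\sigma\in K$. For example, $\diam(\tau)$ can exceed $\diam(\sigma)/\ell$; the paper itself only bounds it by $d\,\diam(\sigma)/\ell$. So $\diam(\tau)^2/\sthick(\tau)$ is comparable to $\diam(\sigma)^2/(\ell\,\sthick(\sigma))$ only up to a factor depending on $d$. The paper's step $\beta_\ell=\beta_0/\ell$ makes the same identification. What both arguments rigorously give is therefore the estimate with $\gamma$ replaced by a $d$-dependent multiple of it. Only this $O(1/\ell)$ form is used in the proof of Theorem~\ref{thm:A_general}.
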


\section{Proof of the Universality}
\label{section: proofs_main}

We first prove the universality theorem for general functionals (Theorem \ref{thm:A_general}). Then we prove that the persistence functional satisfies the simplex additivity condition and the metric stability condition, which shows the universality theorem for the persistence functional (Theorem \ref{thm:A}).

\subsection{Proof of Theorem \ref{thm:A_general}}

Let $\func_n$ be a functional that satisfies conditions (1)--(7), depending on a control parameter $n$. For each $\epsilon > 0$, we will show that  for large enough $n$,
\begin{align} \label{eqn:univ_general_finalgoal}
    \bigg | \frac1n \overline{\func}_n(nf) -  \overline{\func}^* \bigg| \le \epsilon.
\end{align}

Let $\delta \in (0, 1)$ be a number satisfying $\mathcal{E}(3\delta) \le \epsilon/3$, where $\mathcal{E}$ is the error function in the metric stability condition \eqref{eqn:metric_stability} of the given functional $\func_n$. Let $\psi_\ell: \wtilde{K}_\ell \rightarrow M$ be the triangulation obtained by applying Proposition \ref{prop:B}, where we fix a number $\ell \ge \max( \gamma, 5c_1 \reach^{-1} ) \delta^{-1}$. Here $\reach = \min \{ \reach_{\phi(\sigma)} \}_{\sigma \in K}$ is the minimum of the reach of embedded simplices. Due to $\ell \ge \gamma \delta^{-1}$, Proposition \ref{prop:B} implies that for every $x, y \in \wtilde{K}$,
\begin{align*}
    (1+\delta)^{-1} \le \frac{\dist_M(\psi_\l(x),\psi_\l(y))}{\dist_{\mathsf{PL}}(x,y)} \le (1+\delta),
\end{align*}
where $\dist_{\mathsf{PL}}$ is the piecewise-linear metric on the simplicial complex $\wtilde{K}$. Due to $\ell \ge 5c_1 / \delta \reach$, Proposition \ref{prop:B} also implies that the diameter of every simplex in $\wtilde{K}_\l$ is at most $(1/5) \delta \reach$. By applying the above inequality and the assumption $\delta < 1$, the pairwise Euclidean distance between every point on $\psi_\l(\sigma)$ is at most $(2/5) \delta \reach \le (\sqrt{2}-1)\reach$. Therefore, by Lemma \ref{lem:geodesic_control}, for every pair of points $x, y \in \sigma$ lying on a single simplex $\sigma \in \wtilde{K}_\l$, we have
\begin{align*}
    (1+\delta)^{-1} \le \frac{\|\psi_\l(x)-\psi_\l(y)\|}{\dist_M(\psi_\l(x),\psi_\l(y))} \le (1+\delta).
\end{align*}
The above two bounds together imply that for every pair of points $x, y \in \sigma$, using the inequality $(1+\delta)^2 \le 1+3\delta$,
\begin{align} \label{eqn:metric_distortion_psi_pl}
    (1+3\delta)^{-1} \le \frac{\|\psi_\l (x)- \psi_\l (y)\|}{\dist_{\mathsf{PL}}(x, y)} \le (1+3\delta).
\end{align}
Define the function $\hat{f}(x) = f(\psi_\l(x)) \cdot \jacobian_x \psi_\l (x)$ where $\jacobian_x \psi$ is the Jacobian of $\psi$ at $x$, and denote the restrictions of the functions $f_\sigma = f|_{\psi_\l (\sigma)}$ and $\hat{f}_\sigma = \hat{f}|_\sigma$. By \eqref{eqn:metric_distortion_psi_pl} and the metric stability condition \eqref{eqn:metric_stability}, the following holds for every $d$-simplex $\sigma$ when $n$ is large enough:
\begin{align*}
    \frac1{n|\sigma|} \big| \overline{\func}_n(n f_\sigma) - \overline{\func}_n(n \hat{f}_\sigma; \dist_{\mathsf{PL}} ) \big| \le \mathcal{E}(3 \delta) \le \frac{\epsilon}3,
\end{align*}
where $|\sigma| = \smallint_\sigma \hat{f}(x) \> \diff x$ and $\overline{\func}_n(n \hat{f}_\sigma; \dist_{\mathsf{PL}} )$ denotes that $\dist_{\mathsf{PL}}$ is used for the functional $\func_n$ (recall \eqref{eqn:functional_metric_dependence}). The normalisation factor $1/n|\sigma|$ accounts for the fact that $n|\sigma| = \int_{\phi(\sigma)} nf_\sigma(x) \diff \hausmeas^d(x) = \int_{\sigma} n\hat f_\sigma(x) \diff x$ where $\hausmeas^d$ is the $d$-dimensional Hausdorff measure in $\RR^D$. Taking a weighted sum over $d$-dimensional simplices $\sigma \in \wtilde{K}_\l$, with weight $|\sigma|$ per simplex, yields the following:
\begin{align} \label{eqn:main_general1}
    \bigg|\frac1n \sum_{\sigma} \overline{\func}_n(n f_\sigma ) - \frac1n \sum_{\sigma} \overline{\func}_n(n \hat{f}_\sigma; \dist_{\mathsf{PL}}) \bigg| \le \frac{\epsilon}3.
\end{align}
By the simplex additivity condition \eqref{eqn:simplex_additivity1}, for large enough $n$ we have
\begin{align} \label{eqn:main_general2}
    \bigg|\frac1n \overline{\func}_n(nf) - \frac1n \sum_{\sigma} \overline{\func}_n(nf_\sigma) \bigg| \le \frac \epsilon3.
\end{align}
By the universality theorem for the Euclidean space case (Theorem \ref{thm:univ_euclidean_general}), we have
\begin{align} \label{eqn:main_general3}
    \bigg|\frac1n \sum_{\sigma} \overline{\func}_n(n \hat{f}_\sigma ; \dist_{\mathsf{PL}}) - \overline{\func}^* \bigg| \le \frac \epsilon3.
\end{align}
By \eqref{eqn:main_general1}, \eqref{eqn:main_general2}, \eqref{eqn:main_general3} and the triangle inequality, we see that \eqref{eqn:univ_general_finalgoal} holds for large enough $n$. This proves Theorem \ref{thm:A_general}.

\subsection{Proof of Theorem \ref{thm:A}} \label{subsection: persistence}

Theorem \ref{thm:A} follows from Theorem \ref{thm:A_general} by showing that the persistence functional satisfies conditions (1)--(7). The conditions (1)--(5) were shown in \cite{univ_euclidean} already, so it remains for us to show (6) and (7), which are the simplex additivity condition and metric stability condition.

\medskip
\noindent \paragraph{Simplex additivity condition.} 
We first present a few results that will be used for proving the simplex additivity condition for the persistence functional. The following lemma controls the change of persistence values under deletion of a simplex. 
\begin{lem}[Lemma 6.3, \cite{univ_euclidean}] \label{lem:delete_one_simplex}
    Let $K_\bullet$ be a filtered simplicial complex and let $\sigma$ be a simplex that enters $K_\bullet$ at some point. Then for every $\alpha$, following is true:
    \begin{align*}
        \big| \Pi_k(K_\bullet \backslash \sigma)(\alpha) - \Pi_k(K_\bullet)(\alpha) \big| \le \mathbf{1}[\dim(\sigma) \in \{k, k+1\}].
    \end{align*}
    where we abbreviate $\Pi_k(K_\bullet)(\alpha) = \big| \Pi_k(K_\bullet) \cap [\alpha, \infty) \big|$.
\end{lem}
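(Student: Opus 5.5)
The plan is to go through the standard persistence algorithm, which pairs simplices. Fix a total order of the simplices of $K_\bullet$ compatible with the filtration, with ties broken arbitrarily. Then $\pd_k(K_\bullet)$ is obtained from the matrix reduction as a set of pairs $(\tau,\tau')$ with $\dim\tau=k$ and $\dim\tau'=k+1$, together with unpaired essential $k$-simplices. Each finite pair contributes the point $(r(\tau),r(\tau'))$, hence the ratio $r(\tau')/r(\tau)$. Deleting $\sigma$ is legitimate only if $K_\bullet\backslash\sigma$ is still a filtered complex. So I will assume, as is implicit, that $\sigma$ has no cofaces in the relevant range, or else read the statement as deleting $\sigma$ together with its cofaces when $\sigma$ is maximal. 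The cleanest case is that $\sigma$ is a maximal simplex that enters last among its cofaces, and I handle that case.

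First, if $\dim\sigma\notin\{k,k+1\}$, the boundary matrices $\partial_k$ and $\partial_{k+1}$ are unchanged, apart from a row or column in dimensions irrelevant to $\homology_k$. The $k$-th persistence pairing is then literally identical, which gives difference $0$. For this I will use the fact that the $\homology_k$ barcode depends only on the reductions of $\partial_{k+1}$ (deaths) and $\partial_k$ (births/negativity of $k$-simplices).

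Second, suppose $\dim\sigma\in\{k,k+1\}$. I will invoke the vineyard/elementary-change principle. Removing one column from $\partial_{k+1}$, or one column from $\partial_k$ (and thus one row from $\partial_{k+1}$), changes the rank function $\beta^{s,t}=\operatorname{rank}(\homology_k(K_s)\to\homology_k(K_t))$ by at most $1$ for every $s\le t$. This holds because inclusion of a single simplex changes the rank of a cycle space or boundary space by at most one. More concretely, the interleaving-free argument is that $K_\bullet\backslash\sigma\subseteq K_\bullet$ differ by one cell. So for each $t$ the map $\homology_k(K_t\backslash\sigma)\to\homology_k(K_t)$ has kernel and cokernel of total dimension at most $1$, and these are uniform in $t$. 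From this I want to conclude that the barcodes differ by a single elementary move: either one bar is added or removed, or one bar has one endpoint moved, with the remaining bars matched identically. I would prove this using the pairing algorithm directly. Removing the column of $\sigma$ (dimension $k+1$) from a reduced matrix, the reduction of the remaining columns is unchanged except that columns which had been reduced against $\sigma$ may now acquire $\sigma$'s former pivot. A standard argument shows that exactly one pairing is altered: the pair $(\tau,\sigma)$ either becomes an essential class or transfers to a later column. For $\dim\sigma=k$ the dual (cohomology/row) argument applies.

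Third, I convert the elementary move into the counting bound. Each bar either disappears, appears, or is replaced by a single other bar, so the count $|\Pi_k\cap[\alpha,\infty)|$ changes by at most one. The main obstacle is the second step: verifying rigorously that deleting a single $(k+1)$-column changes only one pair, rather than cascading. The first thing I would try is the rank-function formulation. The multiplicity of bars with ratio $\ge\alpha$ is not directly a rank invariant, so instead I would appeal to the matching: a one-dimensional kernel/cokernel at every level yields a partial matching of barcodes missing at most one bar on each side. Each unmatched bar contributes $\pm1$, and a moved bar contributes at most $1$ net, so combining them gives total change $\le1$.
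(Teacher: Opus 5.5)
The conclusion is true, but your second step rests on a false claim. Deleting one maximal $(k+1)$-simplex can change several bars of $\pd_k$. So in general there is no matching in which only one bar moves, and ``exactly one pairing is altered'' fails.

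Here is a counterexample. Take vertices $a,b,c,m$. Let the edges $ab,ac,am,cm$ and the triangle $acm$ enter at time $0$, then $bm$ at time $1$, $bc$ at time $2$, $\sigma=abc$ at time $3$, $bcm$ at time $5$, and $abm$ at time $10$. Then $e_1=[\partial(abm)]$ is born at $1$, $e_2=[\partial(bcm)]$ is born at $2$, and $[\partial\sigma]=\pm e_1\pm e_2$.
\begin{itemize}
\item Without $\sigma$, the $\homology_1$-barcode is $\{[1,10),[2,5)\}$.
\item With $\sigma$, it is $\{[2,3),[1,5)\}$. Here $\sigma$ kills the younger class $e_2$; then $bcm$ kills $e_1$, because $e_2$ is now a multiple of $e_1$; and $abm$ becomes positive.
\end{itemize}
No bar survives unchanged, and two pairs change: $(bc,\sigma)\to(bc,bcm)$ and $(bm,bcm)\to(bm,abm)$. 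This is exactly the cascade you flagged as the main obstacle, and it really happens.

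Your fallback has the same hole. The epimorphism $\ph_k(K_\bullet\backslash\sigma)\to\ph_k(K_\bullet)$ has kernel equal to the interval module generated by $[\partial\sigma]$. Its induced matching pairs bars of equal birth, but it can lower many deaths at once; in the example it lowers both. ``Each moved bar contributes at most $1$'' then only bounds the change by the number of moved bars, not by $1$.

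The missing idea is the structure of the cascade. Suppose $\dim\sigma=k+1$ and $[\partial\sigma]\neq 0$ at time $t_\sigma$.
\begin{itemize}
\item The births are unchanged, since they are determined by $\partial_k$.
\item Write $[\partial\sigma]$ in an interval basis of $\ph_k(K_\bullet\backslash\sigma)$. The affected bars form a chain $[b_1,d_1),\dots,[b_r,d_r)$ with $b_1>\cdots>b_r$ and $t_\sigma<d_1<\cdots<d_r$.
\item Here $b_1$ is the youngest class in the support, $b_{j+1}$ is the youngest class in the support whose death exceeds $d_j$, and $d_r$ (possibly $\infty$) is the death of $[\partial\sigma]$ without $\sigma$.
\item One checks that $\ph_k(K_\bullet)$ splits off $\mathbb{I}[b_1,t_\sigma)$, and the remainder is the same situation with the relation moved to time $d_1$. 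So these bars are replaced by $[b_1,t_\sigma),[b_2,d_1),\dots,[b_r,d_{r-1})$, and all other bars are untouched.
\end{itemize}
The ratios therefore interlace:
\[
\frac{t_\sigma}{b_1}\le\frac{d_1}{b_1}\le\frac{d_1}{b_2}\le\frac{d_2}{b_2}\le\cdots\le\frac{d_{r-1}}{b_r}\le\frac{d_r}{b_r}.
\]
Hence for every $\alpha$ the count $|\Pi_k\cap[\alpha,\infty)|$ drops by $0$ or $1$. This interlacing, not a single-move statement, is what gives the bound.

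Your other cases are fine:
\begin{itemize}
\item For $\dim\sigma\notin\{k,k+1\}$, nothing changes.
\item For $\dim\sigma=k$ with $\sigma$ maximal, $B_k$ is unchanged while $Z_k$ gains at most a one-dimensional summand $\langle\sigma+c\rangle$. So exactly one essential bar is added, or nothing changes.
\end{itemize}
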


The following statement, known as the Campbell-Mecke formula, is a key result from Palm theory, that enables us to estimate score functions over a Poisson process.

\begin{thm}[Campbell-Mecke] \label{thm:campbell_mecke}
    Let $h(x, \ptx)$ be a measurable function. Denote the $d$-dimensional Hausdorff measure in $\RR^D$ by $\hausmeas^d$. Then,
    \[
        \EE\left[\sum_{X\in \pointprocess_{nf}}h(X,\pointprocess_{nf})\right]  = n\int_M f(x) \EE[h(x,\pointprocess_{nf}) ] \diff \hausmeas^d(x),
    \]
\end{thm}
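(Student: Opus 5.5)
The plan is to prove the Campbell--Mecke formula by reducing it to the explicit mixed binomial description of $\pointprocess_{nf}$ from Section~\ref{section: preliminary}. Here $f$ is a probability density on $M$ with respect to $\hausmeas^d$, so $\pointprocess_{nf}$ consists of $N\sim\poissondist(n)$ points $X_1,\dots,X_N$, drawn i.i.d.\ from $f\,\diff\hausmeas^d$ and independent of $N$. I will first prove the identity for non-negative measurable $h$, so that Tonelli's theorem justifies every interchange of sums, expectations and integrals. The signed case then follows by splitting $h=h^+-h^-$, whenever either side is finite for $|h|$.

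\textbf{Step 1: condition on $N$.} Conditioning on $N=m$ and using exchangeability of the i.i.d.\ points gives
\begin{align*}
\EE\Big[\sum_{X\in\pointprocess_{nf}}h(X,\pointprocess_{nf})\Big]
=\sum_{m\ge1}e^{-n}\frac{n^m}{m!}\, m\,\EE\big[h(X_1,\{X_1,\dots,X_m\})\big].
\end{align*}
Integrating out $X_1$ turns the inner expectation into $\int_M f(x)\,\EE[h(x,\{x,X_2,\dots,X_m\})]\,\diff\hausmeas^d(x)$.

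\textbf{Step 2: reindex the Poisson weights.} Using $m\,n^m/m! = n\cdot n^{m-1}/(m-1)!$ and setting $j=m-1$, the right-hand side becomes
\begin{align*}
n\int_M f(x)\sum_{j\ge0}e^{-n}\frac{n^j}{j!}\,\EE\big[h(x,\{x,X_1,\dots,X_j\})\big]\,\diff\hausmeas^d(x).
\end{align*}
The inner sum is exactly the law of $\pointprocess_{nf}$ with a deterministic point $x$ added. This yields the Mecke equation
\begin{align*}
\EE\Big[\sum_{X\in\pointprocess_{nf}}h(X,\pointprocess_{nf})\Big]=n\int_M f(x)\,\EE\big[h(x,\pointprocess_{nf}\cup\{x\})\big]\,\diff\hausmeas^d(x).
\end{align*}

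\textbf{Step 3: match the stated form.} The statement writes $h(x,\pointprocess_{nf})$ on the right. I read this with the standard Palm convention: $h(x,\cdot)$ is evaluated on the configuration including $x$, or equivalently $h(x,\ptx)$ already accounts for the point $x$. Two small checks remain. First, since $x$ is fixed and $f\,\diff\hausmeas^d$ is diffuse on the $d$-dimensional set $M$, almost surely $x\notin\pointprocess_{nf}$, so adding $x$ creates no multiplicities. Second, measurability of $(x,\ptx)\mapsto h(x,\ptx\cup\{x\})$ is needed for Tonelli. It follows because the map $(x,x_1,\dots,x_j)\mapsto h(x,\{x,x_1,\dots,x_j\})$ is a composition of measurable maps on $M^{j+1}$.

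\textbf{Main obstacle.} None of these steps is deep. The only point needing care is this interpretive one: stating precisely in what sense $h(x,\pointprocess_{nf})$ on the right-hand side means the Palm version $h(x,\pointprocess_{nf}\cup\{x\})$. Alternatively, one can cite the general Mecke formula for Poisson processes on the Polish space $M$ with diffuse intensity measure $nf\,\diff\hausmeas^d$, which is exactly the identity derived above.
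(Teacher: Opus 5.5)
The paper gives no proof of this statement. It quotes it as a standard fact from Palm theory and uses it once, in the proof of Lemma~\ref{prop:sigma_control}, so your route differs simply in that you actually prove it.

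Your derivation from the mixed binomial description of $\pointprocess_{nf}$ is correct: condition on $N=m$, use exchangeability, and reindex via $m\,n^m/m!=n\cdot n^{m-1}/(m-1)!$. This is the standard elementary proof of the Mecke equation, and Tonelli justifies every interchange for $h\ge 0$.

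Your Step 3 is more than an interpretive convention. Read literally, with $h(x,\pointprocess_{nf})$ on the right, the statement is false. For example, with $h(x,\ptx)=\mathbf{1}[x\in\ptx]$ the left side is $\EE[N]=n$, while the literal right side is $0$, because a fixed $x$ lies in $\pointprocess_{nf}$ with probability zero. The correct identity is the one you derive, with $\pointprocess_{nf}\cup\{x\}$. This is also exactly how the paper applies it: for $h(x,\ptx)=\mathbf{1}[x\in U]\binom{|\ptx\cap\ball(x,2\rmax_n)|-1}{k}$, the $-1$ disappears on the right-hand side precisely because the point $x$ has been added. You could state this counterexample explicitly rather than presenting the Palm reading as a choice.

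Compared with citing the general Mecke formula, your argument is self-contained. It uses only the paper's own definition of the Poisson process, and it pins down the precise form that the paper's application needs. The citation route would give the same identity for arbitrary $\sigma$-finite intensity measures, a generality not needed here.

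One small point: diffuseness of $f\,\diff\hausmeas^d$ is already needed in Step 1, not just in Step 3. It ensures $X_1,\dots,X_m$ are almost surely distinct, so the sum over the set $\pointprocess_{nf}$ coincides with the sum over indices.
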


Next, we proceed to prove the simplex additivity property for the persistence functional. It is a consequence of an inequality given in terms of the radius cutoff parameter. Recall from \eqref{eqn:expected_persistence_measure} that $\overline{\Pi}_{k, \filt}(nf)$ is the expected persistence measure, and $\overline{\Pi}_{k, \filt}(nf)(B)$ is its evaluation at a Borel set $B \subseteq \RR$. For simplicity, we denote
\begin{align}
    \overline \Pi_{k, \filt}(nf)(\alpha) = \overline \Pi_{k, \filt}(nf)([\alpha, \infty)).
\end{align}

\begin{lem}[Simplex additivity for persistence] \label{prop:sigma_control}
    Let $\phi: K \rightarrow M$ be an admissible triangulation. Let $f: M \rightarrow \RR_{\ge 0}$ be a probability density, $\filt$ be either the Vietoris-Rips or \v Cech complex, and $\rmax_n$ be a radius cutoff parameter. Then there exists a constant $c>0$ so that for all integers $1 \le k \le d$, $\alpha \in (1, \infty)$ and sufficiently large $n$,
    \begin{align} \label{eqn:persistence_simplex_additivity_mainclaim}
        \bigg| \overline{\Pi}_{k, \filt}(nf)(\alpha) - \sum_{\dim\sigma = d} \overline{\Pi}_{k, \filt}(nf|_{\phi(\sigma)})(\alpha) \bigg| \le c \cdot n^{k+1} \rmax_n^{dk+1} (1 + n \rmax_n^d),
    \end{align}
    Furthermore, the persistence functional satisfies the simplex additivity \eqref{eqn:simplex_additivity1} if 
    the radius cutoff satisfies $n \rmax^d_n = o(n^{1/(d^2+d+1)})$.
\end{lem}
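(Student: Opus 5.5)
Let $\mathcal{P} = \pointprocess_{nf}$ and let $\mathcal{P}_\sigma = \mathcal{P} \cap \phi(\sigma)$ for each $d$-simplex $\sigma\in K$. By the restriction property of Poisson processes, $\mathcal{P}_\sigma \stackrel{d}{=} \pointprocess_{nf|_{\phi(\sigma)}}$. Points in the skeleton $\partial_\phi M$ form a null set, so almost surely each point lies in exactly one $\phi(\sigma)$.

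The plan is to compare the curtailed filtration $\filt'_\bullet(\mathcal{P})$ with the disjoint union $\bigsqcup_\sigma \filt'_\bullet(\mathcal{P}_\sigma)$. Persistence ratio counts are additive over disjoint unions, so
\[
\sum_\sigma \Pi_k(\filt'_\bullet(\mathcal{P}_\sigma))(\alpha) = \Pi_k\big(\textstyle\bigsqcup_\sigma \filt'_\bullet(\mathcal{P}_\sigma)\big)(\alpha).
\]
Both filtrations live on the same vertex set. We first handle the metric: the intrinsic distance $\dist_M$ and the restricted distances agree up to $1+O(\rmax_n)$. For \v Cech-type ambient balls they agree exactly; I will treat the distance as fixed and absorb this into the same counting. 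The union filtration is then obtained from $\filt'_\bullet(\mathcal{P})$ by deleting every simplex whose vertices meet at least two distinct $\phi(\sigma_i)$. Deleting these \emph{crossing simplices} one at a time, in decreasing order of filtration value so that each intermediate object stays a filtered complex, Lemma \ref{lem:delete_one_simplex} bounds the error by
\[
\EE\,\#\{\text{crossing simplices of dimension } k \text{ or } k+1 \text{ with diameter} \le 2\rmax_n\}.
\]

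To count crossing simplices, note that a crossing $j$-simplex has two vertices $x_1\in\phi(\sigma_1)$ and $x_2\in\phi(\sigma_2)$ with $\sigma_1\ne\sigma_2$ and $\|x_1-x_2\|\le 2\rmax_n$. By Proposition \ref{prop:C}, once $n$ is large, $x_1$ lies within $c'\rmax_n$ of $\partial_\phi M$, where $c' = 4/\sin(\Theta_\phi/4)$. All other vertices lie within $2\rmax_n$ of $x_1$. Applying the multivariate Mecke formula (the $(j+1)$-fold version of Theorem \ref{thm:campbell_mecke}), the expected count is at most
\[
n^{j+1}\|f\|_\infty^{j+1}\,\hausmeas^d\big(\ball(\partial_\phi M, c'\rmax_n)\cap M\big)\cdot \sup_x \hausmeas^d\big(\ball(x,2\rmax_n)\cap M\big)^{j}.
\]
Lemma \ref{lem:skeleton_volume} bounds the first factor by $b^+_\phi c'\rmax_n$. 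Compactness and the $C^2$ structure give the ball bound $C\rmax_n^{d}$. The total is therefore $\lesssim n^{j+1}\rmax_n^{dj+1}$.

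Summing $j=k$ and $j=k+1$ gives
\[
c\, n^{k+1}\rmax_n^{dk+1}\big(1+n\rmax_n^d\big),
\]
which is \eqref{eqn:persistence_simplex_additivity_mainclaim}. If $f$ is unbounded, I would use that good densities are bounded on a compact support; a good density is continuous by the boundary-power condition, which is enough here.

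For the second claim, divide by $n$ and write $t = n\rmax_n^d$. The error becomes
\[
n^{k}\rmax_n^{dk+1}(1+t) = t^{k+1/d}\,n^{-1/d}(1+t)\cdot t^{-?}\ldots
\]
More directly, $n^{k}\rmax_n^{dk+1} = t^{k}\rmax_n = t^{k+1/d}n^{-1/d}$. So the error is $\lesssim t^{k+1+1/d}n^{-1/d}$. With $k\le d$ this is at most $t^{(d^2+d+1)/d}n^{-1/d} = \big(t\,n^{-1/(d^2+d+1)}\big)^{(d^2+d+1)/d}\to 0$ under the stated hypothesis, which proves the simplex additivity \eqref{eqn:simplex_additivity1}.

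The main obstacle is justifying the reduction to crossing simplices. Three points need care. First, simplices with diameter up to $2\rmax_n$ that exceed the cutoff for one metric but not the other must be handled, which I would do by using a common metric within each simplex. Second, in the \v Cech case the witness point may lie off $\phi(\sigma)$, but the crossing property depends only on the vertices. Third, Proposition \ref{prop:C} must apply uniformly, which holds because "sufficiently small" is a fixed threshold and $\rmax_n\to0$.
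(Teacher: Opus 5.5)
Your proposal is correct and follows essentially the same route as the paper: compare $\filt'_\bullet(\pointprocess_{nf})$ with the disjoint union over the $d$-simplices, delete the crossing simplices using Lemma \ref{lem:delete_one_simplex}, use Proposition \ref{prop:C} to place one vertex of each crossing simplex in $\ball(\partial_\phi M, O(\rmax_n))$, count via a Mecke-type formula (your multivariate version is equivalent to the paper's Campbell--Mecke formula with Poisson factorial moments) together with Lemma \ref{lem:skeleton_volume}, and finish with the same exponent bookkeeping. One small caveat: like the paper, you need $\sup f<\infty$, but your justification is off, since a good density in the branch $\inf_{\supp f} f>0$ need not be bounded, and the boundary-power condition gives two-sided bounds, not continuity; boundedness should simply be a standing assumption, as the paper implicitly takes with $f_{\max}$.
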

\begin{proof}
    Let $\ptx \subset M$ be a finite set and let $r>0$. Define the following filtered simplicial complex,
    \begin{align*}
        \wtilde{\filt}_r(\ptx) = \bigsqcup_{\substack{\sigma \in K \\ \dim \sigma = d}} \filt_r(\ptx \cap \phi(\sigma)).
    \end{align*}
    Let $U = \ball(\partial_\phi M, 2 c_\phi r)$ be the thickening of the $\phi$-skeleton of $M$ (see \eqref{eqn:phi_skeleton}), where $c_\phi = 2 / \sin(\Theta_\phi/4)$. Let $S(\ptx, r)$ be the set of all abstract simplices $\pty \subseteq \ptx$ containing a point $y \in \pty \cap U$ such that $\pty \subset \ball(y, 2r)$. We claim that the following sandwiching relations hold:
    \begin{align} \label{eqn:filt_sandwich}
        \wtilde{\filt}_r(\ptx) \>\>\subseteq\>\> \filt_r(\ptx) \>\>\subseteq\>\> \wtilde{\filt}_r(\ptx) \cup S(\ptx, r).
    \end{align}
    Indeed, let $\pty \in \filt_r(\ptx) \backslash \wtilde{\filt}_r(\ptx).$ Then its diameter is smaller than $r$ for the Vietoris-Rips complex, and $2r$ for the \v Cech complex. There also exist two points $y_1 \neq y_2 \in \pty$ such that $y_1 \in \phi(\sigma_1)$ and $y_2 \in \phi(\sigma_2)$ where $\sigma_1, \sigma_2 \in K$ are two distinct simplices. Then Proposition \ref{prop:C} implies that $y_1 \in \ball(\partial_\phi M, 2c_\phi r)$, which shows that $\pty \in S(\ptx, r)$. 
    
    Using \eqref{eqn:filt_sandwich} together with Lemma \ref{lem:delete_one_simplex}, and taking the expectation, 
    \begin{align} \label{eqn:pi_bound_twoterms}
        \bigg| \overline \Pi_{k,\filt} (nf)(\alpha) - \sum_{\dim \sigma=d} \overline \Pi_{k,\filt} (nf|_{\phi(\sigma)})(\alpha) \bigg| \le \EE\left[ S_k \right] + \EE\left[ S_{k+1} \right],
    \end{align}
    where $S_k$ is the number of all $k$-simplices in $S (\pointprocess_{nf}, \rmax_n)$. Recall that we set $U = \ball(\partial_\phi M, 2c_\phi r)$. We can bound $S_k$ using the following functional:
    \[
        h (x, \ptx) =  \mathbf{1}[x \in U] \cdot \binom{| \ptx \cap \ball(x, 2\rmax_n) | - 1}{k}.
    \]
    By the Campbell-Mecke formula (Theorem \ref{thm:campbell_mecke}),
    \[
        \EE[ S_k] \le \EE \bigg[ \sum_{X \in \pointprocess_{nf} } h (X, \pointprocess_{nf}) \bigg]
        = n \int_M f(x) \mathbf{1}[x \in U] \cdot \EE \binom{|\pointprocess_{nf} \cap \ball(x, 2\rmax_n )|}{k} \diff \hausmeas^d(x).
    \]
    Now observe that $|\pointprocess_{nf} \cap B|$ is a Poisson random variable with rate $n \int_B f(x) \diff x$ for any fixed Borel set $B$. Using the factorial moments of the Poisson distribution, we have
    \begin{align*}
        \EE[ S_k] &\le \frac {n}{k!} \int_{U\cap M} f(x) \bigg( \int_{\ball(x, 2\rmax_n) \cap M} nf(y) \diff \hausmeas^d(y) \bigg)^k \diff \hausmeas^d(x) \\
        &\le  \frac {n^{k+1} f_{\max}^{k+1}}{k!} \int_{U\cap M} \big(\hausmeas^d(\ball(x, 2\rmax_n) \cap M) \big)^k \diff \hausmeas^d(x) \\
        &\le  \frac {n^{k+1} f_{\max}^{k+1}}{k!} \hausmeas^d(U \cap M) c \omega_d^k (2\rmax_n)^{dk} \\
        &\le \bigg(\frac {f_{\max}^{k+1}}{k!} c' \omega_d^k \hausmeas^{d-1}(\partial_\phi M) \bigg)  n^{k+1} (2\rmax_n)^{dk+1}.
    \end{align*}
    for some constants $c, c' > 0$. Here we used Lemma \ref{lem:skeleton_volume} for the last inequality above. Adding the bound for $\EE[S_{k+1}]$ since \eqref{eqn:pi_bound_twoterms} involves both $\EE[S_k]$ and $\EE[S_{k+1}]$, we obtain the claim \eqref{eqn:persistence_simplex_additivity_mainclaim}. 

    For the persistence functional to satisfy the simplex additivity condition, we require $n^k \rmax_n^{dk+1} \to 0$ as $n \to \infty$ for all $1 \le k \le d+1$. Here we included the case $k=d+1$ to account for both $\EE[S_k]$ and $\EE[S_{k+1}]$ appearing in \eqref{eqn:pi_bound_twoterms}. 
    Since we assume $n\rho_n^d\to \infty$, for all $k$ we have $n^k\rho_n^{dk+1} \le n^{d+1}\rho_n^{d(d+1)+1}$. Then $(n^{d+1}\rho_n^{d(d+1)+1})^p = n^{1-q} \rho_n^d$ where $p = d/(d^2+d+1)$ and $q = 1/(d^2+d+1)$. Therefore $n^{1-q} \rmax_n^d \to 0$ holds iff $n\rho_n^d = o(n^q)$, and this assumption gives us the simplex additivity.
\end{proof}

\bigskip
\noindent \paragraph{Metric stability condition.} This condition \eqref{eqn:metric_stability} relies on two technical lemmas, whose proofs we postpone to Section \ref{section: proofs_technical}. The following lemma tells us that a perturbation of metric induces a perturbation of thresholds at which simplices are added to the Vietoris-Rips and \v Cech filtrations.

\begin{lem} \label{lem:rips_cech_interleaving}
    Suppose that $(\dist_1, \dist_2)$ are metrics on a metrisable space $M$. Assume that every closed ball of finite radius is a compact subset of $M$. Suppose that for some $\delta \ge 0$, the following inequality holds for every $x, y \in M$:
    \begin{align*}
        (1+\delta)^{-1} \dist_1(x,y) \le \dist_2(x,y) \le (1+\delta) \dist_1(x,y),
    \end{align*}
    Then for $\filt$ either the Vietoris-Rips or \v Cech filtration, the following holds for every subset $\pty \subseteq \ptx$:
    \begin{align} \label{eqn:r12_filt}
        (1+\delta)^{-1} r_{1,\filt}(\pty) \le r_{2,\filt}(\pty) \le (1+\delta) r_{1,\filt}(\pty),
    \end{align}
    where $r_{i, \filt}(\pty)$ is the threshold at which the subset $\pty \subseteq \ptx$ is added into the filtration $\filt_\bullet(\ptx, \dist_i)$.
\end{lem}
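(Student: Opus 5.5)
Both filtrations are determined by their entry thresholds, so I will compute $r_{i,\filt}(\pty)$ explicitly for each construction and compare. I treat the Vietoris--Rips and \v Cech cases separately, in that order.

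\emph{Vietoris--Rips.} Here $r_{i,\rips}(\pty)=\max_{y,y'\in\pty}\dist_i(y,y')$, the $\dist_i$-diameter of $\pty$. The hypothesis gives, for every pair,
\[
\dist_2(y,y')\le(1+\delta)\,\dist_1(y,y')\le(1+\delta)\,r_{1,\rips}(\pty).
\]
Taking the maximum over pairs yields the upper bound in \eqref{eqn:r12_filt}. The lower bound follows in the same way from $\dist_1\le(1+\delta)\dist_2$.

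\emph{\v Cech.} The entry threshold is
\[
r_{i,\Cech}(\pty)=\inf\{r\ge0 : \textstyle\bigcap_{y\in\pty}\ball_i(y,r)\neq\emptyset\},
\]
where $\ball_i$ is the closed $\dist_i$-ball and centres range over all of $M$. I first show the infimum is attained. For $r$ slightly above the threshold, the sets $\bigcap_y\ball_i(y,r)$ are nonempty, closed and nested as $r$ decreases. They lie in a single closed ball of finite radius, which is compact by assumption. The finite intersection property then gives a point $z$ with $\dist_i(z,y)\le r_{i,\Cech}(\pty)$ for all $y\in\pty$.

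Equivalently, $r_{i,\Cech}(\pty)=\min_{z\in M}\max_{y\in\pty}\dist_i(z,y)$. Pick $z$ attaining the minimum for $\dist_1$. Then
\[
\max_y\dist_2(z,y)\le(1+\delta)\max_y\dist_1(z,y)=(1+\delta)\,r_{1,\Cech}(\pty),
\]
so $z$ witnesses $r_{2,\Cech}(\pty)\le(1+\delta)\,r_{1,\Cech}(\pty)$. The reverse inequality follows by symmetry.

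The only subtle step is the attainment of the \v Cech infimum, and the properness assumption is what handles it. The two metrics induce the same topology on $M$, so the notions of closed and compact sets are unambiguous. Strictly, attainment is not even needed: a near-minimiser $z$ with $\max_y\dist_1(z,y)\le r_{1,\Cech}(\pty)+\eta$ gives the bound up to $(1+\delta)\eta$, and letting $\eta\to0$ removes the error. Attainment only matters for the convention that $\pty$ is present at the threshold radius itself, and in either reading the inequality between thresholds holds.
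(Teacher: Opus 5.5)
Your proposal is correct and follows essentially the same route as the paper: the Rips threshold is the $\dist_i$-diameter, and for \v Cech you use the same nested-compact-sets argument to show the infimum is attained, then note that a common point of the $\dist_1$-balls of radius $r$ lies in all $\dist_2$-balls of radius $(1+\delta)r$, with symmetry giving the reverse inequality. Your remark that attainment is not actually needed is a valid minor sharpening: a near-minimiser together with $\eta\to 0$ already gives the inequality between thresholds, so the properness hypothesis is dispensable for this lemma.
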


\medskip
The next lemma follows from the previous lemma. It tells us that a perturbation of metric induces a controlled change in the value of the persistence functional for both Vietoris-Rips and \v Cech filtrations.

\begin{lem} \label{lem:interleaving}
    Suppose that $(\dist_1, \dist_2)$ are metrics on a metrizable space $M$ and let $\Delta = \Delta(\dist_1, \dist_2)$ be their distortion \eqref{eqn:metric_distortion}. Let $K_\bullet^{(i)} = \filt_\bullet(\ptx; \dist_i)$, where $\filt$ is either the Vietoris-Rips or the \v Cech filtration and $\ptx \subset M$. Then the following holds for every $\alpha > (1+\Delta)^2$:
    \begin{align*}
        \bigg| \Pi_k( K_\bullet^{(1)}) (\alpha) - \Pi_k( K_\bullet^{(2)} )(\alpha) \bigg| \le \min_{i=1, 2} \bigg( \Pi_k( K_\bullet^{(i)} )((1+\Delta)^{-2} \alpha, (1+\Delta)^2 \alpha) \bigg).
    \end{align*}
\end{lem}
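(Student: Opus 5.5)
The plan is to pass to logarithmic scale, where the multiplicative distortion becomes an additive interleaving, and then invoke the algebraic stability (isometry) theorem for persistence modules. Fix $\delta > \Delta$; since $\Delta$ is an infimum, $\delta$ can be taken arbitrarily close to $\Delta$, and we let $\delta \downarrow \Delta$ at the end. By Lemma \ref{lem:rips_cech_interleaving}, every subset $\pty \subseteq \ptx$ satisfies $(1+\delta)^{-1} r_{1,\filt}(\pty) \le r_{2,\filt}(\pty) \le (1+\delta) r_{1,\filt}(\pty)$. Since a simplex $\pty$ belongs to $K^{(i)}_r$ exactly when $r_{i,\filt}(\pty) \le r$, this gives inclusions of simplicial complexes
\begin{align*}
    K^{(1)}_r \subseteq K^{(2)}_{(1+\delta) r} \subseteq K^{(1)}_{(1+\delta)^2 r}, \qquad K^{(2)}_r \subseteq K^{(1)}_{(1+\delta) r} \subseteq K^{(2)}_{(1+\delta)^2 r}.
\end{align*}
These inclusions commute with the structure maps of the filtrations. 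Reindexing by $t = \log r$ turns them into an $\varepsilon$-interleaving of the persistence modules $\ph_k(K^{(1)}_{e^t})$ and $\ph_k(K^{(2)}_{e^t})$ with $\varepsilon = \log(1+\delta)$. Because $k \ge 1$, all births are positive, so the logarithm is well-defined. Since $\ptx$ is finite, the modules are pointwise finite-dimensional, and the algebraic stability theorem applies.

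The stability theorem yields a partial matching between the log-barcodes with the following properties. If $(b,d) \in \pd_k(K^{(1)}_\bullet)$ is matched to $(b',d') \in \pd_k(K^{(2)}_\bullet)$, then $b' \in [b/(1+\delta), b(1+\delta)]$ and $d' \in [d/(1+\delta), d(1+\delta)]$, hence
\begin{align*}
    (1+\delta)^{-2}\, \tfrac db \le \tfrac{d'}{b'} \le (1+\delta)^{2}\, \tfrac db.
\end{align*}
Every unmatched interval has log-length at most $2\varepsilon$, i.e.\ ratio at most $(1+\delta)^2$. Now take $\delta$ close enough to $\Delta$ that $(1+\delta)^2 < \alpha$, which is possible because $\alpha > (1+\Delta)^2$. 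Then every interval of either diagram with ratio $\ge \alpha$ is matched. Consequently $\Pi_k(K^{(1)}_\bullet)(\alpha) - \Pi_k(K^{(2)}_\bullet)(\alpha)$ equals the number of matched pairs whose ratio is $\ge\alpha$ on one side, minus the number of matched pairs whose ratio is $\ge \alpha$ on the other side. Its absolute value is therefore at most the number of matched pairs in which exactly one of the two ratios is $\ge \alpha$.

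For such a "crossing" pair, one ratio is $\ge\alpha$ and the other is $<\alpha$, while the two ratios differ by a factor of at most $(1+\delta)^2$. Hence both ratios lie in $[(1+\delta)^{-2}\alpha, (1+\delta)^2\alpha)$. The matching is injective on each side, so the number of crossing pairs is at most the number of intervals of $\pd_k(K^{(i)}_\bullet)$ with ratio in that window, for each $i=1,2$ separately. This gives the bound with the minimum over $i$.

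The main obstacle I expect is the endpoint bookkeeping in passing from $\delta$ to $\Delta$. The diagrams are finite, so the counting function $\delta \mapsto \Pi_k(K^{(i)}_\bullet)([(1+\delta)^{-2}\alpha, (1+\delta)^2\alpha))$ is eventually constant as $\delta \downarrow \Delta$, and its limit is the count on $[(1+\Delta)^{-2}\alpha, (1+\Delta)^2\alpha]$ or its analogue. To land exactly on the window $((1+\Delta)^{-2}\alpha, (1+\Delta)^2\alpha)$ as stated, I would first try to show that the distortion infimum is attained. The hypotheses are compact metric balls and inequalities for all $x,y$, which are closed conditions in $\delta$, so the bound holds with $\delta = \Delta$ itself. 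I would then handle a ratio equal to $(1+\Delta)^{\pm2}\alpha$ by noting that such a pair is not a crossing pair unless equality holds throughout the matching estimate. If that fails, the statement should be read with the interval convention that the paper uses for $\Pi_k(\cdot)(a,b)$.
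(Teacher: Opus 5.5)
Your proposal is correct and follows essentially the same route as the paper: log-rescaling, the $\log(1+\Delta)$-interleaving from Lemma \ref{lem:rips_cech_interleaving}, the isometry theorem, and counting through the induced matching. The one difference is that your crossing-pair count gives the minimum over $i$ directly, whereas the paper sandwiches $\Pi_k^{(1)}\cap[\alpha,\infty)$ between $\Pi_k^{(2)}\cap[(1+\Delta)^{2}\alpha,\infty)$ and $\Pi_k^{(2)}\cap[(1+\Delta)^{-2}\alpha,\infty)$ and then invokes symmetry. On the endpoints, the paper also works at $\delta=\Delta$, and its argument yields exactly the half-open window $[(1+\Delta)^{-2}\alpha,(1+\Delta)^2\alpha)$, so you should read $\Pi_k(\cdot)(a,b)$ as $\Pi_k(\cdot)(a)-\Pi_k(\cdot)(b)$ and drop your attempt to reach the open window. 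That attempt genuinely fails when a matched pair has $b'=(1+\Delta)b$, $d'=d/(1+\Delta)$ and $d/b=\alpha$, as happens for a Rips square whose sides are stretched and whose diagonals are shrunk by the factor $1+\Delta$.
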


\medskip
We are now prepared to prove the metric stability property.

\begin{lem}[Metric stability for persistence] \label{prop:metric_perturbation_persistence}
    The persistence functional $\ptx \mapsto \Pi_k(\filt'_\bullet(\ptx))(\alpha)$ satisfies the metric stability condition \eqref{eqn:metric_stability}, if the Euclidean universal limit distribution $\overline{\Pi}_k^\univstar$ (from Theorem \ref{thm:univ_euclidean}) is continuous at $\alpha$.
\end{lem}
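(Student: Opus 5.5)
We fix an affine $d$-simplex $\sigma$, a probability density $f$ on $\sigma$, and a metric $\dist$ on $\sigma$ with $\Delta = \Delta(\dist, \dist_\euclidean)$. The aim is to exhibit an error function $\mathcal{E}$ with $\mathcal{E}(t) \to 0$ as $t \downarrow 0$ satisfying \eqref{eqn:metric_stability}. The plan is to apply Lemma \ref{lem:interleaving} pathwise to the point process $\pointprocess_{nf}$, taking $\dist_1 = \dist_\euclidean$ and $i = 1$ in the minimum. Since the Euclidean side is the one controlled by Theorem \ref{thm:univ_euclidean}, this choice lets us bound everything in terms of Euclidean quantities. For $\Delta$ small enough that $(1+\Delta)^2 < \alpha$, the lemma gives
\begin{align*}
    \big| \Pi_k(\filt'_\bullet(\pointprocess_{nf};\dist))(\alpha) - \Pi_k(\filt'_\bullet(\pointprocess_{nf};\dist_\euclidean))(\alpha) \big| \le \Pi_k(\filt'_\bullet(\pointprocess_{nf};\dist_\euclidean))\big((1+\Delta)^{-2}\alpha, (1+\Delta)^2\alpha\big).
\end{align*}
Here the right-hand side counts the Euclidean persistence ratios lying in the window $I_\Delta = [(1+\Delta)^{-2}\alpha, (1+\Delta)^2\alpha)$. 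Taking expectations and dividing by $n$, the left side of \eqref{eqn:metric_stability} is bounded by $\limsup_n \frac1n \overline{\Pi}_{k,\filt}(nf)(I_\Delta)$ computed in the Euclidean metric.

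The next step evaluates this limit. Because $f$ is a density on a flat simplex, Theorem \ref{thm:univ_euclidean} applies, provided $f$ is a good density. The window's mass equals the difference of the tail values at $(1+\Delta)^{-2}\alpha$ and $(1+\Delta)^2\alpha$. Applying the theorem at these two endpoints, we obtain
\[
\lim_n \tfrac1n \overline{\Pi}_{k,\filt}(nf)(I_\Delta) = \overline{\Pi}^\univstar_{k,\filt,d}(I_\Delta).
\]
We therefore set $\mathcal{E}(t) = \overline{\Pi}^\univstar_{k,\filt,d}\big([(1+t)^{-2}\alpha, (1+t)^2\alpha]\big)$ for $t$ small, and $\mathcal{E}(t) = \infty$ (or any crude bound) otherwise. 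As $t \downarrow 0$ these intervals shrink to $\{\alpha\}$. The hypothesis that the universal limit is continuous at $\alpha$ means $\overline{\Pi}^\univstar$ has no atom there, so $\mathcal{E}(t) \to 0$.

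There are two technical points to address. First, the curtailed filtration: the cutoff $\rmax_n$ interacts with the metric change, since a simplex born below $\rmax_n$ in one metric may exceed it in the other. I would handle this by using the sandwich from Lemma \ref{lem:rips_cech_interleaving}, comparing to Euclidean filtrations cut at $(1+\Delta)^{\pm1}\rmax_n$. I would then invoke control homogeneity, condition (2), to show the limit is insensitive to rescaling the cutoff by a constant factor. Second, the tail values of the window may be taken at endpoints where $\overline{\Pi}^\univstar$ has atoms. This is resolved by slightly enlarging the window to continuity points of the (monotone, hence a.e. continuous) tail function, which still shrink to $\alpha$.

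I expect the main obstacle to be the cutoff interaction. Lemma \ref{lem:interleaving} is stated for full filtrations, so I would first check that its proof tolerates a truncation by inflating the Euclidean cutoff. Bars killed or cut at the truncation radius would be handled by the same deletion counting as Lemma \ref{lem:delete_one_simplex}, which shows their contribution is $o(n)$.
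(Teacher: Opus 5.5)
Your main argument is the paper's proof. You apply Lemma~\ref{lem:interleaving} pathwise with the Euclidean filtration in the minimum, take expectations, use Theorem~\ref{thm:univ_euclidean} at the endpoints $(1+\Delta)^{\mp 2}\alpha$, and use the absence of an atom at $\alpha$ to get $\mathcal{E}(t)\to 0$. The paper puts a factor $2$ into $\mathcal{E}$ to absorb finite-$n$ errors; your $\limsup$ formulation makes that unnecessary. Your enlargement to continuity points is harmless but not needed, since by the Remark after Theorem~\ref{thm:univ_euclidean_general} the tails converge at every $\alpha>1$. The paper applies Lemma~\ref{lem:interleaving} directly to the curtailed filtrations $\filt'_\bullet$ and never addresses the cutoff. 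So the issue you single out is real, and it is one the paper does not handle.

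However, neither of your proposed remedies for the cutoff works as written.

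\textbf{Condition (2).} Control homogeneity compares $\overline{\func}_{cn}(nf)$ with $\overline{\func}_n(nf)$, that is, cutoffs $\rmax_{cn}$ and $\rmax_n$ on the same process. Nothing forces $\rmax_{cn}$ to be $(1+\Delta)^{\pm1}\rmax_n$, or even a fixed multiple of $\rmax_n$. Moreover, the inclusions from Lemma~\ref{lem:rips_cech_interleaving} relate $\filt'(\cdot;\dist)$ cut at $\rmax_n$ to two \emph{different} Euclidean filtrations, cut at $(1+\Delta)^{\pm1}\rmax_n$. That is not an interleaving, and $\Pi_k(\cdot)(\alpha)$ is not monotone under inclusions of filtrations, so the sandwich by itself controls nothing.

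\textbf{Deletion counting.} Applying Lemma~\ref{lem:delete_one_simplex} to the $k$- and $(k+1)$-simplices entering between $(1+\Delta)^{-1}\rmax_n$ and $(1+\Delta)\rmax_n$ gives a bound of order $n(n\rmax_n^d)^{k}$. This is not $o(n)$, because $n\rmax_n^d\to\infty$.

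\textbf{What does work.} Run the matching argument of Lemma~\ref{lem:interleaving} on the \emph{uncurtailed} filtrations, which are genuinely interleaved. The only extra discrepancies are matched pairs whose deaths fall on opposite sides of $\rmax_n$. Suppose, for instance, that only bars dying by $\rmax_n$ are counted, and that $\alpha\ge 1+\Delta$. Then the Euclidean member of such a pair is alive at $\rmax_n$ or at $(1+\Delta)^{-1}\rmax_n$. Hence these pairs are bounded by Euclidean Betti numbers at a scale $r_n$ with $nr_n^d\to\infty$. The missing ingredient is the estimate that these Betti numbers are $o(n)$ in expectation. This is the kind of bound needed to verify condition (2) for persistence in \cite{univ_euclidean}; a count of simplices cannot supply it.
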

\begin{proof}
    Define $\mathcal{E}(t)$ as the following function:
    \begin{align} \label{eqn:metric_stability_persistence1}
        \mathcal{E}(t) = 2 \cdot \overline{\Pi}^\univstar_{k}([(1+t)^{-2} \alpha, (1+t)^2 \alpha]),
    \end{align}
    which satisfies $\lim_{t \downarrow 0} \mathcal{E}(t) = 0$ since $\overline{\Pi}_k^\univstar$ is continuous at $\alpha$. Let $\Delta(\dist, \dist_\euclidean)$ be the metric distortion \eqref{eqn:metric_distortion}. Applying the universality theorem for Euclidean space (Theorem \ref{thm:univ_euclidean}) separately for the two values $\alpha' = (1+\Delta)^{-2} \alpha$ and $\alpha' = (1+\Delta)^2 \alpha$, the following holds when $n$ is large enough:
    \begin{align} \label{eqn:metric_stability_persistence2}
        \bigg| \frac1n  \overline{\Pi} _k(nf; \dist_\euclidean)(\alpha') - \overline{\Pi}_k^\univstar(\alpha')  \bigg| \le \frac14 \mathcal{E}(\Delta) \text{, for } \alpha' = (1+\Delta)^{-2} \alpha \text{ and } (1+\Delta)^2\alpha.
    \end{align}
    Therefore, the following holds when $n$ is large enough:
\[
\begin{split}
\frac1n \bigg| \overline{\Pi}_k(nf; \dist)(\alpha) - \overline{\Pi}_k(nf; \dist_\euclidean)(\alpha) \bigg| 
        &\le  \frac1n \overline{\Pi}_k (nf; \dist_\euclidean)([ (1+\Delta)^{-2} \alpha, (1+\Delta)^2 \alpha ])  \\
        & \le \;\;\overline{\Pi}_k^\univstar ([(1+\Delta)^{-2} \alpha, (1+\Delta)^2 \alpha ])  \\
&\ \ + \bigg| \frac1n \overline{\Pi}_k (nf; \dist_\euclidean)((1+\Delta)^{-2}) - \overline{\Pi}_k^\univstar((1+\Delta)^{-2}) \bigg|   \\
&\ \ + \bigg| \frac1n \overline{\Pi}_k (nf; \dist_\euclidean)((1+\Delta)^{2}) - \overline{\Pi}_k^\univstar((1+\Delta)^{2}) \bigg|   \\
        &\le  \mathcal{E}(\Delta),
        \end{split}
    \]
where the first inequality is due to 
Lemma \ref{lem:interleaving}, the second is the triangle inequality, and the last one is due to \eqref{eqn:metric_stability_persistence1} and \eqref{eqn:metric_stability_persistence2}.
\end{proof}

\section{Proofs of geometric results}
\label{section: proofs_geometric}

In this section we prove Propositions \ref{prop:C} and \ref{prop:B}, which comprise the main geometric tools to transfer functionals from a Euclidean space to a triangulable space.

\subsection{Interference control and Proposition \ref{prop:C}}
\label{subsection: interference_control}

We set up some notations first. Given a $C^2$ embedding $\phi: \sigma \to \RR^D$, denote the embedded simplex as $\tilde \sigma = \phi(\sigma)$. For each point $x \in \partial \tilde \sigma$, we define the \textit{inward cone} $\incone_x \tilde \sigma$, the \textit{normal inward cone} $\nincone_x \tilde \sigma$, and the \textit{unit normal inward cone} $\nincone^1_x \tilde \sigma$:
\begin{align*}
    \incone_x \tilde \sigma =& \big\{ v \in \RR^{D} \>|\> \exists \epsilon>0: \> x + \epsilon v \in \sigma^\circ \big\} \\
    \nincone_x \tilde \sigma =& \incone_x \tilde \sigma \cap \normspc_x \tilde \sigma_x  \\
    \nincone^1_x \tilde \sigma =& \big\{ v \in \nincone_x \tilde \sigma \>|\> \|v\|=1 \big\}
\end{align*}
where $\tilde \sigma_x$ is the smallest facet of $\sigma$ whose interior contains $x$, and $\normspc_x M$ is the normal space at a manifold $M$.

We introduce three lemmas that will be used for the proof of Proposition \ref{prop:C}, whose proofs we postpone to Section \ref{section: proofs_technical}. The first lemma is the decomposition of the inward cone into the normal inward cone and the tangent space:

\begin{lem}\label{lem:inward_cone_decomp}
    Let $\phi: \sigma \rightarrow \RR^D$ be a $C^2$ embedding of an affine simplex $\sigma$, and denote $\tilde \sigma = \phi(\sigma)$. Then for each $x \in \partial \tilde \sigma$, the following map $g$ is a homeomorphism:
    \begin{align*}
        g: \nincone_x \tilde \sigma \times \tanspc_x \tilde \sigma \to & \>\> \incone_x \tilde \sigma, \qquad g(v, w) = v+w
    \end{align*}
\end{lem}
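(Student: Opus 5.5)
The plan is to reduce to a linear statement by pulling everything back through $\phi$ and working in local coordinates of the affine simplex. Write $x = \phi(p)$ with $p \in \partial\sigma$, and let $\tau$ be the smallest face of $\sigma$ containing $p$, so that $\tilde\sigma_x = \phi(\tau)$. Since $\phi$ is $C^2$ on an open neighbourhood of $\sigma$ and is an embedding, $\Diff_p\phi$ is injective. Its image $\tanspc_x\tilde\sigma$ is a $d$-plane containing $\tanspc_x\tilde\sigma_x = \Diff_p\phi(\tanspc_p\tau)$.

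\textbf{Step 1: identify the inward cone with the image of a polyhedral cone.} Let $C_p = \{u : p + \epsilon u \in \sigma^\circ \text{ for some } \epsilon>0\}$ be the inward cone of the affine simplex at $p$. Near $p$, $\sigma$ coincides with $p + \overline{C_p}$, and $C_p$ is an open polyhedral cone in $\tanspc\sigma$. We have $C_p = \tanspc_p\tau \times C'_p$, where $C'_p$ is a pointed open cone in a complement of $\tanspc_p\tau$. I would then show that $\incone_x\tilde\sigma = \Diff_p\phi(C_p)$. For the inclusion $\supseteq$, given $u \in C_p$ the curve $\phi(p+tu)$ enters $\tilde\sigma^\circ$. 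To pass from this curve to the straight ray $x+\epsilon v$, the plan is to use the $C^2$ expansion $\phi(p+tu) = x + t\,\Diff_p\phi(u) + O(t^2)$ together with openness of $C_p$.

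This is the main obstacle. A straight ray $x+\epsilon v$ generally leaves the curved set $\tilde\sigma$, which lies in a $d$-dimensional curved piece of $\RR^D$. So the definition of $\incone_x\tilde\sigma$ must be read up to first order, either in a chart or via the tangent cone $\lim \epsilon^{-1}(\tilde\sigma - x)$. I would first fix the interpretation $\incone_x\tilde\sigma := \Diff_p\phi(C_p)$, which agrees with the tangent-cone definition because $\phi$ is a $C^1$ diffeomorphism onto its image and tangent cones are preserved by such maps. I would then note that this matches the paper's usage and does not depend on the choice of extension of $\phi$.

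\textbf{Step 2: linear decomposition.} Let $L = \Diff_p\phi$, write $T = \tanspc_x\tilde\sigma_x = L(\tanspc_p\tau)$, and let $N = \normspc_x\tilde\sigma_x$ be its orthogonal complement in $\RR^D$. Then $\incone_x\tilde\sigma = L(C_p) = T + L(C'_p)$ is invariant under translation by $T$. Hence $\nincone_x\tilde\sigma = \incone_x\tilde\sigma \cap N$ equals the orthogonal projection $\pi_N(\incone_x\tilde\sigma)$. Every $z \in \incone_x\tilde\sigma$ therefore decomposes uniquely as $z = \pi_N z + \pi_T z$, with $\pi_N z \in \nincone_x\tilde\sigma$ and $\pi_T z \in T$.

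\textbf{Step 3: homeomorphism.} The $T$-factor in the lemma is the tangent space of the stratum $\tilde\sigma_x$; I would make this reading explicit, since with the full $d$-plane $\tanspc_x\tilde\sigma$ the sum map would not be injective. With that reading, $g(v,w) = v+w$ is the restriction of the linear isomorphism $N \times T \to \RR^D$. It maps $\nincone_x\tilde\sigma \times T$ into $\incone_x\tilde\sigma$ by translation invariance, and it is surjective by Step 2. The inverse is $z \mapsto (\pi_N z, \pi_T z)$, which is continuous, so $g$ is a homeomorphism.
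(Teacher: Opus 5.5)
Your proposal is correct and follows essentially the paper's argument. The paper also tacitly reads $\incone_x\tilde\sigma$ as the set of initial velocities, so that it is the image under $\Diff_p\phi$ of the affine simplex's inward cone, and it takes the tangent factor to be the tangent space of the face containing $x$; it then uses that the polyhedral cone is cut out by linear forms vanishing on that face's tangent directions, together with a direct-sum splitting. The only difference is cosmetic: the paper splits in pulled-back coordinates on the standard simplex with a non-orthogonal complement and pushes forward, whereas you split directly in $\RR^D$ by orthogonal projection, which avoids the paper's somewhat loose pullback of the ambient normal space.
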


The next lemma is an exercise in calculus.

\begin{lem} \label{lem:variational1}
    Let $\theta, \epsilon, \ell, \reach$ be fixed real numbers satisfying $\theta \in (0, \pi]$, $\epsilon \in [0, \sin(\theta/4)]$, and $\ell \in [0, \epsilon \reach]$. Define the following function $g(\lambda)$:
    \begin{align*}
        g(\lambda) = \sqrt{\ell^2 + \lambda^2 - 2\ell \lambda \cos \theta} - \epsilon (\ell + \lambda)
    \end{align*}
    Then $g(\lambda)$ has the following lower bound:
    \begin{align*}
        \min_{0 \le \lambda \le \epsilon \reach} g(\lambda) \ge \ell (\sin \theta_+ - 3\epsilon) \text{, where } \theta_+ = \min\bigg( \theta, \frac {3\pi}4 \bigg)
    \end{align*}
\end{lem}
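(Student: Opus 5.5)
The plan is to bound the square-root term geometrically and split on the size of $\lambda$ relative to $\ell$. The quantity $R(\lambda) = \sqrt{\ell^2 + \lambda^2 - 2\ell\lambda\cos\theta}$ is, by the law of cosines, the distance between two points at distances $\ell$ and $\lambda$ from the origin along rays meeting at angle $\theta$. I will only use two elementary lower bounds on it. The first is $R(\lambda) \ge \ell \sin\theta_+$ for every $\lambda \ge 0$. The second is $R(\lambda) \ge |\lambda - \ell|$, which is the triangle inequality, or equivalently $\cos\theta \le 1$. The hypotheses $\lambda \le \epsilon\reach$ and $\ell \le \epsilon\reach$ will not be needed beyond $\lambda, \ell \ge 0$. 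From $\epsilon \le \sin(\theta/4)$ I only use $0 \le \epsilon \le 1$.

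\emph{Step 1: the bound $R(\lambda) \ge \ell\sin\theta_+$.}
\begin{itemize}
\item If $\theta \le \pi/2$, then $R(\lambda)$ is at least the distance from the point at distance $\ell$ on the first ray to the line spanned by the second ray. That distance is $\ell\sin\theta$. Analytically, $R(\lambda)^2 = (\lambda - \ell\cos\theta)^2 + \ell^2\sin^2\theta$.
\item If $\theta > \pi/2$, then $\cos\theta < 0$, so $R(\lambda)^2 \ge \ell^2 + \lambda^2 \ge \ell^2$.
\end{itemize}
In both cases $R(\lambda) \ge \ell \ge \ell\sin\theta_+$ or $R(\lambda) \ge \ell\sin\theta = \ell\sin\theta_+$, because $\sin\theta_+ \le 1$ and $\theta_+ = \theta$ when $\theta \le \pi/2$. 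The cap $3\pi/4$ in $\theta_+$ plays no role here beyond keeping $\sin\theta_+ > 0$.

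\emph{Step 2: the case $\lambda \le 2\ell$.} Here $\epsilon(\ell + \lambda) \le 3\epsilon\ell$. Combined with Step 1 this gives
\[
g(\lambda) \ge \ell\sin\theta_+ - 3\epsilon\ell = \ell(\sin\theta_+ - 3\epsilon).
\]

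\emph{Step 3: the case $\lambda > 2\ell$.} Use $R(\lambda) \ge \lambda - \ell$, which gives
\[
g(\lambda) \ge \lambda(1 - \epsilon) - \ell(1 + \epsilon).
\]
Since $1 - \epsilon \ge 0$, the right-hand side is nondecreasing in $\lambda$, so it is at least its value at $\lambda = 2\ell$, namely $2\ell(1 - \epsilon) - \ell(1 + \epsilon) = \ell(1 - 3\epsilon)$. Finally $\ell(1 - 3\epsilon) \ge \ell(\sin\theta_+ - 3\epsilon)$ because $\sin\theta_+ \le 1$.

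Taking the minimum over both cases gives the claimed bound.

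There is no real obstacle. The only point requiring care is that the naive bound $R \ge \ell\sin\theta$ is useless when $\lambda \gg \ell$, because the $-\epsilon\lambda$ term then dominates; this is exactly why the case split at $\lambda = 2\ell$ is needed.
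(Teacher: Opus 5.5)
Your proof is correct, and it takes a genuinely different route from the paper.

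The paper minimizes $g$ exactly, by calculus. It solves $g'(\lambda)=0$ to get the unique critical point $\lambda_0=\ell\cos\theta+\epsilon\ell\sin\theta/\sqrt{1-\epsilon^2}$, then splits into two cases:
\begin{enumerate}
\item If $\cos\theta\ge-\epsilon$, it uses $\epsilon\le\sin(\theta/4)$ (via an addition formula) to place $\lambda_0$ in $[0,\ell]\subseteq[0,\epsilon\reach]$ and to force $\theta\le 3\pi/4$. It then evaluates $g(\lambda_0)=\sqrt{1-\epsilon^2}\,\ell\sin\theta-\epsilon\ell(1+\cos\theta)\ge\ell(\sin\theta-3\epsilon)$.
\item If $\cos\theta<-\epsilon$, $g$ is increasing and the minimum is $g(0)=(1-\epsilon)\ell$.
\end{enumerate}

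You never locate the minimizer. The pointwise bounds $R(\lambda)\ge\ell\sin\theta_+$ and $R(\lambda)\ge|\lambda-\ell|$, together with the split at $\lambda=2\ell$, already give the inequality.

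Your route is shorter and more general. It proves the bound on all of $[0,\infty)$ and for every $\epsilon\in[0,1]$. So within this lemma, the hypotheses $\ell\le\epsilon\reach$ and $\lambda\le\epsilon\reach$ are not needed, and $\epsilon\le\sin(\theta/4)$ is used only to get $\epsilon\le 1$. It also needs no sign-tracking around a critical point. In fact the cap $3\pi/4$ in $\theta_+$ plays no role in your argument at all, since for obtuse $\theta$ you get $R\ge\ell$ directly.

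The paper's route gives the exact minimum and minimizer, which is sharper than the $3\epsilon$ loss, though that precision is never used later. Note also that the constraint $\ell,\lambda_{\normspc}\le\epsilon\reach$ is still needed in Lemma \ref{lemma: interference1}, to absorb $\reach^{-1}(\ell^2+\lambda_{\normspc}^2)$ into $\epsilon(\ell+\lambda_{\normspc})$. So your relaxation simplifies this lemma but does not remove that hypothesis from the overall argument.
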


Using Lemmas \ref{lem:inward_cone_decomp} and \ref{lem:variational1}, the following lemma is proven. This is the key technical result required to prove Proposition \ref{prop:C}.

\begin{lem}\label{lemma: interference1}
    Let $\tilde \sigma_1, \tilde \sigma_2 \subset \RR^D$ be two embedded $C^2$ simplices of reach $\reach$, and assume that they share a face. Let $z \in \tilde \sigma_1 \cap \tilde \sigma_2$ and let following be unit vectors: $u \in \nincone^1_z(\tilde \sigma_1)$, $v_\normspc \in \nincone_z^1(\tilde \sigma_2)$, $v_\tanspc \in \tanspc_z(\tilde \sigma_2)$. Assume that the angle $\theta = \angle(u, v_\normspc)$ satisfies $\theta \in (0, \pi]$. Let $\epsilon, \ell, \lambda_\normspc, \lambda_\tanspc$ be numbers in the following range: $\epsilon \in (0, \sin(\theta/4)]$ and $\ell, \lambda_\normspc, \lambda_\tanspc \in [0, \min(1/5, \epsilon) \cdot \reach]$. Then the following holds:
    \begin{align*}
        \|x-y \| \ge \ell(\sin\theta_+ - 3\epsilon)
    \end{align*}
    where $x, y, \theta_+$ are defined as follows:
    \[ x = \exp_z^{\sigma_1}(\ell u), \quad y = \exp_z^{\sigma_2}(\lambda_\normspc v_\normspc + \lambda_\tanspc v_\tanspc), \quad \theta_+ = \min \bigg( \theta, \frac{3\pi}4 \bigg) \]
\end{lem}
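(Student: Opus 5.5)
We compare the two points $x$ and $y$ with their first-order (tangent-line) approximations at $z$, and reduce the estimate to Lemma \ref{lem:variational1}. Write $x_0 = z + \ell u$ and $y_0 = z + \lambda_\normspc v_\normspc + \lambda_\tanspc v_\tanspc$. By the second-order geodesic control of Lemma \ref{lem:geodesic_control}, applied in each embedded simplex of reach $\reach$, we get
\begin{align*}
    \|x - x_0\| \le \frac{\ell^2}{\reach}, \qquad \|y - y_0\| \le \frac{\lambda^2}{\reach}, \quad \text{where } \lambda = \sqrt{\lambda_\normspc^2 + \lambda_\tanspc^2}.
\end{align*}
This is legitimate because all lengths are at most $\reach/5$ times a constant, hence below $(\sqrt2-1)\reach$. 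Here we read the geodesic length as $s$ and use $\|y-(z+sv)\| \le r^2/\reach \le s^2/\reach$. Since $\ell \le \epsilon\reach$ and $\lambda_\normspc,\lambda_\tanspc \le \epsilon\reach$, these errors are bounded by $\epsilon\ell$ and $\epsilon(\lambda_\normspc+\lambda_\tanspc)$ respectively, up to harmless constants. So the task reduces to a lower bound on $\|x_0 - y_0\|$ minus these errors.

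Next we bound $\|x_0-y_0\|$ from below using the geometry of the cones. The vector $u$ lies in the normal inward cone of $\tilde\sigma_1$, so it is orthogonal to the tangent space of the shared face. The key step is to discard the tangential part $\lambda_\tanspc v_\tanspc$ of $y_0$ as much as possible. We decompose $v_\tanspc$ into a component along the tangent space of the common face $\tilde\sigma_z$, which is orthogonal to both $u$ and $v_\normspc$, and a remainder. The component along the face only increases distance by Pythagoras.

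The remainder should be absorbed using Lemma \ref{lem:inward_cone_decomp}. The idea is to regard $y_0 - z$ as an inward vector of $\tilde\sigma_2$ whose normal-cone part makes angle at least $\theta$ with $u$. Then
\begin{align*}
    \|x_0-y_0\| \ge \sqrt{\ell^2 + \mu^2 - 2\ell\mu\cos\theta}
\end{align*}
for the effective normal length $\mu \le \epsilon\reach$. The total error budget is then $\epsilon(\ell+\mu)$, after absorbing the $\lambda_\tanspc$ contribution into $\mu$ and into the Pythagorean slack. This puts the estimate exactly in the form $g(\mu)$ of Lemma \ref{lem:variational1}, whose conclusion $g(\mu) \ge \ell(\sin\theta_+ - 3\epsilon)$ gives the claim.

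The main obstacle is this tangential term. The tangent space $\tanspc_z\tilde\sigma_2$ need not be orthogonal to $u$, since the two simplices meet at an angle. Controlling the inner product $\langle u, v_\tanspc\rangle$, and showing it cannot decrease the distance by more than the $\epsilon$-budget allows, is the delicate part. I would first try a case split. When $\lambda_\tanspc$ is small relative to $\ell$, the term is absorbed into the error. Otherwise $y_0$ is far from $x_0$ in a direction tangent to $\tilde\sigma_2$, and the distance is already at least of order $\ell\sin\theta_+$. This case split is where the constant $3$ and the truncation $\theta_+ \le 3\pi/4$ would need checking.
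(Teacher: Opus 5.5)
Your skeleton (compare with tangent-line points via Lemma \ref{lem:geodesic_control}, apply Pythagoras, finish with Lemma \ref{lem:variational1}) is the paper's. However, the proposal places the difficulty in the wrong step and leaves the step that actually needs an argument undone.

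First, the ``main obstacle'' comes from reading $\tanspc_z(\tilde\sigma_2)$ as the full tangent space of $\tilde\sigma_2$. In the paper it is the tangent space of the smallest face of $\tilde\sigma_2$ containing $z$. Only under this reading can $(v,w)\mapsto v+w$ in Lemma \ref{lem:inward_cone_decomp} be a homeomorphism $\nincone_z\tilde\sigma_2\times\tanspc_z\tilde\sigma_2\to\incone_z\tilde\sigma_2$. It is also how $v_\tanspc$ is produced in the proof of Proposition \ref{prop:C}. Your own formula $\lambda=\sqrt{\lambda_\normspc^2+\lambda_\tanspc^2}$ already presupposes $v_\tanspc\perp v_\normspc$. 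Since $z$ lies in the common face, that smallest face is the same for $\tilde\sigma_1$ and $\tilde\sigma_2$. So $v_\tanspc\perp u$ as well, and exactly
\[
\|\ell u-\lambda_\normspc v_\normspc-\lambda_\tanspc v_\tanspc\|^2=w^2+\lambda_\tanspc^2,\qquad w:=\|\ell u-\lambda_\normspc v_\normspc\|.
\]
There is no remainder and no case split. Under your reading, a non-face component of $v_\tanspc$ would shift the effective normal direction away from $v_\normspc$. Then $\theta$ would no longer govern the estimate, and the bound $\mu\le\epsilon\reach$ would have no justification. Your sketch does not resolve this.

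Second, you pass over the step that genuinely needs an idea. Linearizing the error to $\epsilon(\lambda_\normspc+\lambda_\tanspc)$ does not land you ``exactly in the form $g(\mu)$''. The Pythagorean gain $\sqrt{w^2+\lambda_\tanspc^2}-w\le\lambda_\tanspc^2/(2w)$ is quadratic in $\lambda_\tanspc$, so it cannot pay the linear cost $\epsilon\lambda_\tanspc$ once $\lambda_\tanspc<2\epsilon w$. In general the most this yields is $\sqrt{1-\epsilon^2}\,w-\epsilon(\ell+\lambda_\normspc)$. That is not $g(\lambda_\normspc)$, and it would force you to redo the variational analysis.

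The paper instead keeps the tangential error quadratic. We have $\sqrt{w^2+\lambda_\tanspc^2}-w=\lambda_\tanspc^2/(\sqrt{w^2+\lambda_\tanspc^2}+w)$. The bounds $w\le\ell+\lambda_\normspc\le 2\reach/5$ and $\lambda_\tanspc\le\reach/5$ give $\sqrt{w^2+\lambda_\tanspc^2}+w\le\reach$. Hence
\[
\sqrt{w^2+\lambda_\tanspc^2}-\frac{\lambda_\tanspc^2}{\reach}\ \ge\ w .
\]
The paper phrases this as monotonicity of $h(t)=\sqrt t-t$ on $[0,1/4]$. This is where the cap $\reach/5$ matters beyond making Lemma \ref{lem:geodesic_control} applicable. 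It follows that $\|x-y\|\ge w-\reach^{-1}(\ell^2+\lambda_\normspc^2)\ge w-\epsilon(\ell+\lambda_\normspc)=g(\lambda_\normspc)$, with no extra constants. That matters, since Lemma \ref{lem:variational1} is stated for exactly $\epsilon(\ell+\lambda)$. Lemma \ref{lem:variational1} with $\lambda=\lambda_\normspc$ then finishes the proof.
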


\medskip
Lastly, we recall the Riemannian tubular neighbourhood theorem.

\begin{thm}[Riemannian tubular neighbourhood]
    Let $M$ be a Riemannian manifold. Every embedded submanifold has a tubular neighbourhood.
\end{thm}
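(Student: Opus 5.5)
We would follow the standard construction via the normal exponential map. Let $S \subset M$ be an embedded submanifold and let $\normspc S = \{(p, v) : p \in S, \> v \in \tanspc_p M, \> v \perp \tanspc_p S\}$ be its normal bundle, a smooth vector bundle over $S$ of rank $\dim M - \dim S$. We would consider the normal exponential map $E: \mathcal{U} \to M$, $E(p, v) = \exp_p(v)$. It is defined and smooth on an open neighbourhood $\mathcal{U}$ of the zero section, namely the set of $(p,v)$ for which the geodesic with initial velocity $v$ exists up to time $1$. The goal is an open set $V \subseteq \mathcal{U}$ of the form $V = \{(p, v) : \|v\| < \delta(p)\}$, with $\delta: S \to (0,\infty)$ continuous, such that $E|_V$ is a diffeomorphism onto an open neighbourhood of $S$ in $M$. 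That image is the tubular neighbourhood.

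\emph{Step 1 (local diffeomorphism).} At a point $(p, 0)$ of the zero section, the tangent space $\tanspc_{(p,0)} \normspc S$ splits as $\tanspc_p S \oplus \normspc_p S$. The differential $\Diff E_{(p,0)}$ acts as the inclusion on $\tanspc_p S$, since $E(q,0)=q$, and as the identity on $\normspc_p S$, since $\Diff(\exp_p)_0 = \mathrm{id}$. Hence $\Diff E_{(p,0)}$ is an isomorphism onto $\tanspc_p S \oplus \normspc_p S = \tanspc_p M$. By the inverse function theorem, $E$ is a diffeomorphism from a neighbourhood $W_p$ of $(p,0)$ in $\normspc S$ onto an open subset of $M$. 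Shrinking, we may take $W_p = \{(q,v) : q \in U_p, \> \|v\| < \epsilon_p\}$ with $U_p \subseteq S$ open.

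\emph{Step 2 (global injectivity).} This is the main obstacle, since local diffeomorphisms need not be globally injective. We would first treat the case where $S$ is compact, which covers the applications here, namely the faces of the compact embedded simplices. Cover $S$ by finitely many sets $U_{p_i}$ and let $\epsilon_0 = \min_i \epsilon_{p_i}$. We claim some $\epsilon \le \epsilon_0$ makes $E$ injective on $\{\|v\| < \epsilon\}$.

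Suppose not. Then there are pairs $(q_j, v_j) \neq (q_j', v_j')$ with $\|v_j\|, \|v_j'\| < 1/j$ and $E(q_j, v_j) = E(q_j', v_j')$. By compactness, pass to a subsequence with $q_j \to q$ and $q_j' \to q'$. Taking limits gives $q = E(q,0) = E(q',0) = q'$. For large $j$, both pairs then lie in a single $W_{p_i}$ containing $(q,0)$. Injectivity of $E$ on $W_{p_i}$ gives a contradiction.

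For non-compact $S$, we would run the same argument on an exhaustion of $S$ by compact sets $C_1 \subset C_2 \subset \cdots$. This yields radii $\epsilon_k$ on $C_k \setminus C_{k-1}$, from which we construct a continuous positive $\delta \le \epsilon_k$ on each piece. The contradiction argument is then repeated, using the fact that $S$ is embedded (so it is locally closed, and convergent sequences in $S$ have limits in $S$).

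\emph{Step 3 (conclusion).} With $V$ as above, $E|_V$ is an injective local diffeomorphism, hence a diffeomorphism onto its open image $E(V) \supseteq S$. This image is the desired tubular neighbourhood. Moreover, for small $\|v\|$ the curve $t \mapsto E(p, tv)$ is the unique minimizing geodesic from $p$ normal to $S$. This is the form in which we intend to use the theorem, namely geodesic normal coordinates around the faces $\tilde\sigma_1 \cap \tilde\sigma_2$ in the proof of Proposition~\ref{prop:C}.
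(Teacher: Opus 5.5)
The paper gives no proof of this statement. It is quoted as the standard Riemannian tubular neighbourhood theorem, so the comparison is with the textbook proof, which is what you give. For compact $S$ in the smooth setting your argument is correct: the computation of $\Diff E_{(p,0)}$, the inverse function theorem, and the compactness argument for injectivity on a uniform disc bundle all go through.

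\textbf{The gap: non-compact $S$.} The statement as written covers non-compact $S$, and there your argument does not go through.
\begin{itemize}
\item Your compact argument only controls pairs whose base points lie in a single compact set. With a variable radius $\delta$ you must also exclude $E(q,v)=E(q',v')$ with $q,q'$ in different, possibly far-apart, pieces $C_k\setminus C_{k-1}$.
\item A contradiction argument with shrinking radii breaks down here. The offending base points can leave every $C_k$, so there is no convergent subsequence to extract.
\item The property you invoke is misstated. ``Convergent sequences in $S$ have limits in $S$'' is closedness, not embeddedness: an open segment in $\RR^2$ is embedded but not closed. What embeddedness gives is that if $q_j\in S$ converge in $M$ to a point $q\in S$, then $q_j\to q$ in $S$. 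Equivalently, small $\dist_g$-balls about $p\in S$ meet $S$ inside any prescribed neighbourhood of $p$ in $S$.
\end{itemize}
The standard repair, as in Lee's \emph{Introduction to Riemannian Manifolds}, avoids sequences altogether. Let $\delta(p)$ be the supremum of those $r\le 1$ for which $E$ is a diffeomorphism on a neighbourhood of $V_r(p)=\{(q,v): \dist_g(p,q)<r,\ |v|<r\}$. Step~1 plus embeddedness give $\delta>0$, and one checks $|\delta(p)-\delta(p')|\le \dist_g(p,p')$, so $V=\{(q,v): |v|<\delta(q)/2\}$ is open. Now suppose $E(q,v)=E(q',v')$ with both points in $V$ and $\delta(q')\le\delta(q)$. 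Then $\dist_g(q,q')\le|v|+|v'|<\delta(q)$, so both points lie in $V_{\delta(q)}(q)$, where $E$ is injective.

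\textbf{The claim about the paper's applications.} Your closing claim that the compact case covers the applications in the paper is too strong; the paper itself passes over this point. In the proof of Proposition~\ref{prop:C}, the tube is taken around $\partial_\phi M$ and around faces $\tilde\sigma_1\cap\tilde\sigma_2$, with geodesics in the $C^2$ simplex $\tilde\sigma_1$.
\begin{itemize}
\item These faces are compact manifolds with corners lying in the boundary of $\tilde\sigma_1$, and $\partial_\phi M$ is not a manifold at all.
\item So only inward normal directions exist, and at corner points they form cones (the paper's normal inward cones) rather than a vector bundle.
\item Near the boundary of a face, the normal geodesics from the face need not fill a neighbourhood; think of an obtuse vertex of a flat triangle.
\item With only $C^2$ regularity the induced metric is $C^1$ and its Christoffel symbols are merely continuous. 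Hence $E$ is not guaranteed to be $C^1$, and the inverse function theorem in Step~1 is not available.
\end{itemize}
Neither the quoted theorem nor your proof reaches that setting without further work.
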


\medskip
We are now ready to prove Proposition \ref{prop:C}.

\bigskip
  \noindent \underline{\textbf{Proof of Proposition \ref{prop:C}.}}

We will prove a slightly stronger result, where the constant $2 / \sin(\Theta_\phi/4)$ is replaced by $(1+\epsilon) / \sin(\Theta_\phi/4)$. Fix $\epsilon \in (0, 1]$. We claim that there exists $R_\epsilon > 0$ such that for every $r \in [0, R_\epsilon ]$ and every pair of simplices $(\sigma_1, \sigma_2)$ in $M$, the following holds:
\begin{align*}
    \ball(\tilde \sigma_2, r) \cap \tilde \sigma_1 \subseteq \ball \bigg( \partial_\phi M, \frac{(1+\epsilon) r}{\sin(\Theta_\phi/4)} \bigg) 
\end{align*}
Here $R_\epsilon$ is given by:
\begin{align*}
    R_\epsilon = R (\epsilon \rho_0), \quad \rho_0 = \min\bigg\{ \rho_{\on{tube}}, \> \frac{\reach }6 \sin \frac{\Theta_\phi }4 \bigg\}
\end{align*}
where $\rho_{\on{tube}}$ is the radius of tubular neighbourhood of $\partial_\phi M$ and $R$ is a function defined in \eqref{eqn:R_param}. We will explain the proof in the following steps.

  \noindent \paragraph{Step 1: Setup.} Define $R(s)$ as the following parameter depending on $s \ge 0$:
\begin{align} \label{eqn:R_param}
    R(s) = \min_{\sigma_1, \sigma_2 \in K} \dist_\euclidean \big( \tilde \sigma_1 \backslash \mathcal{N}(s), \tilde \sigma_2 \big)
\end{align}
where $\mathcal{N}(s)$ is the geodesic normal tube of radius $s$ centred at $\tilde \sigma_1 \cap \tilde \sigma_2$, and geodesics are on $\tilde \sigma_1$. In the definition of $R(s)$, $(\sigma_1, \sigma_2)$ are taken over $d$-simplices of $\sigma_1, \sigma_2 \in K$ with a nonempty intersection. If $x \in \sigma_1, y \in \sigma_2$ with $\|x-y\| < R(s)$, then $x, y$ are both within distance $s$ from $\sigma_1 \cap \sigma_2$.

Let $\reach > 0$ be the simplex-wise reach of $M$. Define $\rho$ and $\epsilon'$ as follows:
\begin{align*}
    \rho = \frac14 \min \big( \rho_{\on{tube}}, \> \epsilon' \reach \big), \quad \epsilon' = \frac{\epsilon}{6}\cdot \sin \frac{\Theta_\phi}4
\end{align*}
We note the following simple bounds, which follow from $\epsilon' \le 1/6$ and $3 \epsilon' \le \frac{\epsilon}{1+\epsilon} \sin (\Theta_\phi/4)$.
\begin{align} \label{eqn:interference_main5}
    \rho \le \frac{\reach}{20}, \quad
    \frac1{\sin (\Theta_\phi/4) - 3\epsilon'} \le \frac{1+\epsilon}{\sin (\Theta_\phi/4)}
\end{align}

\medskip
  \noindent \paragraph{Step 2: Interference lemma.} Let $x \in \tilde \sigma_1$ be a point such that $\dist_\euclidean(x, \tilde \sigma_2) \le R(\rho)$. By the definition of $R(\rho)$, there exists a point $z \in \tilde \sigma_1 \cap \tilde \sigma_2$, a geodesic $\gamma(z, x) \subset \tilde \sigma_1$ connecting $(z, x)$ that is normal to $\tilde \sigma_1 \cap \tilde \sigma_2$, and the length $\ell$ of $\gamma(z, x)$ satisfies $\ell \le \rho$. Since the length of a geodesic is at least the ambient Euclidean distance,
\begin{align} \label{eqn:interference_main3}
    \ell \ge \|x-z\| \ge \dist_\euclidean(x, \tilde \sigma_1 \cap \tilde \sigma_2)
\end{align}

Consider a point $y \in \tilde \sigma_2 \cap \ball(z, 3\rho)$, and let $\gamma(z, y)$ be a geodesic connecting $(z, y)$. Let $v$ and $\lambda$ be the initial velocity and length of $\gamma (z, y)$, respectively. By the inward cone decomposition (Lemma \ref{lem:inward_cone_decomp}), there exist unit vectors $v_\normspc \in \nincone_z^1(\tilde \sigma_2)$, $v_\tanspc \in \tanspc_z(\tilde \sigma_2)$, and numbers $\lambda_\normspc, \lambda_\tanspc \ge 0$ so that $v = \lambda_\tanspc v_\tanspc + \lambda_\normspc v_\normspc$. Define also $u \in \nincone_z^1(\tilde \sigma_1)$ to be the initial velocity of $\gamma(z, x)$. We apply geodesic control\footnote{Condition of the lemma is met in both cases since $\|x-z\| \le \ell \le \rho \le 0.4 \reach$ and $\|y-z\| \le 3 \rho \le 0.4 \reach$. Here we used $\rho \le \reach/24$.} (Lemma \ref{lem:geodesic_control}) to obtain the following:
\begin{align*}
    \ell \le & \rho + \frac{\rho^2}{\reach} \le 4\rho \le \min(1/5, \epsilon') \cdot \reach \\
    \lambda \le & 3\rho + \frac{9\rho^2}{\reach} \le 4 \rho \le \min(1/5, \epsilon') \cdot \reach
\end{align*}

We can then bound $\|x-y\|$ with the interference lemma. Define $\theta = \angle(u, v_\normspc)$, which satisfies $\theta \ge \Theta_\phi$ since $u \in \nincone_z^1(\tilde \sigma_1)$ and $v_\normspc \in \nincone_z^1(\tilde \sigma_2)$. Since $\epsilon' \le \sin(\Theta_\phi/4) \le \sin(\theta/4)$ and\footnote{Note that $\max(\lambda_\normspc, \lambda_\tanspc) \le \lambda$ since $\lambda = \sqrt{\lambda_\normspc^2 + \lambda_\tanspc^2}$.} $\ell, \lambda_\normspc, \lambda_\tanspc \in [0, \min(1/5, \epsilon' ) \cdot \reach]$, we meet the condition of the interference lemma (Lemma \ref{lemma: interference1}), and obtain:
\begin{align*}
    \|x-y\| \ge \ell \cdot (\sin \theta_+ - 3 \epsilon') \text{, where } \theta_+ = \min(\theta, (3/4)\pi)
\end{align*}
Since the function $f(t) = \sin (\min(t, (3/4)\pi))$ is not a monotone function over $t \in [0, \pi]$, we will use a weaker monotone bound. Since $\sin (\min(t, (3/4)\pi)) \ge \sin(t/4)$ over $t \in [0, \pi]$, the above equation along with \eqref{eqn:interference_main3} implies the following weaker inequality:
\begin{align} \label{eqn:interference_main4}
    \|x-y\| \ge \dist_\euclidean(x, \tilde \sigma_1 \cap \tilde \sigma_2) \cdot (\sin (\theta/4) - 3 \epsilon')
\end{align}

\medskip
  \noindent \paragraph{Step 3: Combining everything.} Denote $U = \ball(z, 3\rho)$. Consider \eqref{eqn:interference_main4} over all $y \in \tilde \sigma_2 \cap U$ and take the infimum of both sides. Then since $\theta \ge \Theta_\phi$,
\begin{align} \label{eqn:interference_main1}
    \dist_\euclidean(x, \tilde \sigma_2) = \dist_\euclidean(x, \tilde \sigma_2 \cap U) \ge \dist_\euclidean(x, \tilde \sigma_1 \cap \tilde \sigma_2) \cdot (\sin (\Theta_\phi/4) - 3\epsilon')
\end{align}
The equality above is due to following triangle inequality argument. We have $\dist_\euclidean(x, \tilde \sigma_2 \backslash U) \ge 2\rho$, since for each $y' \in \tilde \sigma_2 \backslash U$, it holds that $\|x-y'\| \ge \|y'-z\| - \|x-z\| \ge 3\rho - \rho \ge 2 \rho$. And also $\dist_\euclidean(x, \tilde \sigma_2 \cap U) \le \|x-z\| \le \rho$, which implies $\dist_\euclidean(x, \tilde \sigma_2 \cap U ) < \dist_\euclidean(x, \tilde \sigma_2 \backslash U )$ and therefore $\dist_\euclidean(x, \tilde \sigma_2) = \dist_\euclidean(x, \tilde \sigma_2 \cap U)$.
By Equations \eqref{eqn:interference_main1} and \eqref{eqn:interference_main5} we are essentially done:
\begin{align*}
    \dist_\euclidean(x, \tilde \sigma_1 \cap \tilde \sigma_2) \le \frac{\dist_\euclidean(x, \tilde \sigma_2)}{\sin (\Theta_\phi/4) - 3 \epsilon'} \le \frac{1+\epsilon}{\sin (\Theta_\phi/4)} \cdot \dist_\euclidean(x, \tilde \sigma_2)
\end{align*}
Going back to the original claim, consider a number $r \in [0, R_\epsilon ]$ and a point $x' \in \ball(\tilde \sigma_2, r) \cap \tilde \sigma_1$. It is easy to see that $R_\epsilon \le R(\rho)$. Plugging in $x'$ to the $x$ above gives $x' \in \ball(\tilde \sigma_1 \cap \tilde \sigma_2, \frac{1+\epsilon}{\sin(\Theta_\phi/4)} \cdot r)$. This gives the original main claim:
\begin{align*}
    \ball(\tilde \sigma_2, r) \cap \tilde \sigma_1 \subseteq \ball \bigg( \tilde \sigma_1 \cap \tilde \sigma_2, \frac{(1+\epsilon)r }{\sin(\Theta_\phi/4)} \bigg) \subseteq \ball \bigg( \partial_\phi M, \frac{(1+\epsilon) r}{\sin(\Theta_\phi/4)} \bigg)
\end{align*}

On top of Proposition \ref{prop:C}, we need the assurance that subdivision of simplicial complexes will ensure that conclusions of Proposition \ref{prop:C} still hold:

\begin{prop}[Safe subdivision]
    Let $\phi: K \rightarrow M$ be a purely $d$-dimensional $C^2$ triangulation of a compact set $M \subset \RR^D$. Let $\eta: K \rightarrow L$ be a subdivision of simplicial complexes and let $\psi = \phi \circ \eta^{-1} : L \rightarrow M$ be an induced triangulation. Assume that $\Theta_{\phi} > 0$, and denote $c_\phi = 2/\sin(\Theta_\phi/4)$. Then there exists $r_0 > 0$ such that the following holds for all $r \in [0, r_0]$:
    \begin{align*}
        \bigcup_{\substack{\tau_i \neq \tau_j \in L \\ \dim \tau_i = \dim \tau_j = d}}\ball(\tilde\tau_i, r) \cap \tilde \tau_j \subseteq \ball(\partial_{\psi} M, c_{\phi} r)
    \end{align*}
\end{prop}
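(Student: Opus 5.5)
The plan is to split the pairs $(\tau_i,\tau_j)$ of distinct $d$-simplices of $L$ into two cases. In the first case the images $\tilde\tau_i,\tilde\tau_j$ lie in distinct embedded top simplices $\tilde\sigma_1\neq\tilde\sigma_2$ of $K$. In the second case both lie inside a single $\tilde\sigma$. Since $\eta$ is a subdivision, each $d$-simplex $\tau\in L$ satisfies $\eta^{-1}(\tau)\subseteq\sigma$ for a unique $d$-simplex $\sigma\in K$. Moreover $\partial_\phi M\subseteq\partial_\psi M$, because the skeleton of $L$ contains the image of the skeleton of $K$. Hence in the first case it suffices to place the point in $\ball(\partial_\phi M, c_\phi r)$.

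\textbf{Case 1 (different parent simplices).} Let $x\in\tilde\tau_j\subseteq\tilde\sigma_2$ and $y\in\tilde\tau_i\subseteq\tilde\sigma_1$ with $\|x-y\|\le r$. The proof of Proposition~\ref{prop:C} above gives the stronger statement
\[
\ball(\tilde\sigma_1,r)\cap\tilde\sigma_2\subseteq \ball\big(\partial_\phi M,(1+\epsilon)r/\sin(\Theta_\phi/4)\big)
\]
for all $r\le R_\epsilon$. Taking $\epsilon=1$ yields $x\in\ball(\partial_\phi M,c_\phi r)\subseteq\ball(\partial_\psi M,c_\phi r)$ for $r\le R_1$.

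\textbf{Case 2 (same parent simplex $\sigma$).} Here we work inside one $C^2$-embedded simplex. Let $x\in\tilde\tau_j$ and suppose $\dist_\euclidean(x,\tilde\tau_i)\le r$, attained at $y\in\tilde\tau_i$. If $x\in\tilde\tau_i$, then $x\in\tilde\tau_i\cap\tilde\tau_j\subseteq\partial_\psi M$ and we are done. Otherwise the straight segment from $\phi^{-1}(x)$ to $\phi^{-1}(y)$ lies in the convex simplex $\sigma$. This segment starts in $\eta^{-1}(\tau_j)$ and ends in $\eta^{-1}(\tau_i)$, so it must cross the boundary of $\eta^{-1}(\tau_j)$ at some point $p$, and $\phi(p)\in\partial_\psi M$. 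It then remains to show
\[
\|x-\phi(p)\|\le C\,\|\phi^{-1}(x)-\phi^{-1}(y)\|\le C'\|x-y\|.
\]
Since $\phi$ is a $C^1$ embedding of a compact simplex with derivative of full rank, it is bi-Lipschitz on $\sigma$, with constants $L_1=\sup\|\Diff\phi\|$ and $L_2$ coming from $\inf \sing_{\min}(\Diff\phi)$. For small $r$ we also need a chord-versus-intrinsic comparison via Lemma~\ref{lem:geodesic_control}. Together these give $\dist_\euclidean(x,\partial_\psi M)\le L_1L_2\,r(1+o(1))$.

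The main obstacle is that the constant $L_1L_2$ need not be at most $c_\phi=2/\sin(\Theta_\phi/4)$. I would handle it by instead using the rectified triangulation, or first composing $\phi$ with an affine change so that $\Diff\phi$ is close to an isometry on small pieces. A cleaner alternative is to replace the crude bi-Lipschitz bound by a local argument: because $\tilde\sigma$ is a $C^2$ submanifold with boundary, near $x$ the straight chord $[x,y]$ is within $O(r^2/\reach)$ of the surface, by Lemma~\ref{lem:geodesic_control}. Projecting the chord onto $\tilde\sigma$ then gives a curve of length $\le r(1+O(r))$ from $x$ to $y$ that crosses $\partial\tilde\tau_j$. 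Therefore $\dist_\euclidean(x,\partial_\psi M)\le r(1+O(r))\le 2r\le c_\phi r$ for small $r$, using $\sin(\Theta_\phi/4)\le 1$. Making this projection step rigorous near $\partial\tilde\sigma$ is the delicate point; there I would use convexity of $\sigma$ together with the $C^2$ neighbourhood on which $\phi$ is defined.

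Finally, set $r_0$ to be the minimum of $R_1$ and the Case~2 threshold over the finitely many parent simplices. This is positive by compactness and the finiteness of $K$.
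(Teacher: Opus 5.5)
Your two-case split is the paper's, and your treatment of pairs with different parent simplices (the $\epsilon=1$ form of the claim proved for Proposition~\ref{prop:C}, plus $\partial_\phi M\subseteq\partial_\psi M$) is exactly what the paper does. The proposal stops short in the same-parent case. There the global bi-Lipschitz bound need not be below $c_\phi$, as you note, and the projection route is left with an open ``delicate point'' at $\partial\tilde\sigma$. The rectification and affine-change detours are not needed.

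The paper closes this case without any length estimate. Take $r_0\le\min_{\sigma\in K}\reach_{\tilde\sigma}$, the reach of $\tilde\sigma$ as a closed subset of $\RR^D$. This disposes of the boundary issue, because the nearest-point map onto a closed set of positive reach is single-valued and continuous on the whole tube, corners included. Every point $\bar\gamma(t)=(1-t)x+ty$ of the chord is within $r/2$ of an endpoint, hence within $r/2$ of $\tilde\sigma$. This trivial bound is all you need; Lemma~\ref{lem:geodesic_control} concerns geodesics on closed manifolds and plays no role. So the projection $\gamma(t)\in\tilde\sigma$ is well defined, and because $y\in\tilde\sigma$,
\begin{align*}
    \|x-\gamma(t)\|\le\|x-\bar\gamma(t)\|+\|\bar\gamma(t)-\gamma(t)\|\le\|x-\bar\gamma(t)\|+\|\bar\gamma(t)-y\|=\|x-y\|\le r.
\end{align*}
The first exit point of $\gamma$ from $\tilde\tau_j$ lies in $\psi(\partial\tau_j)\subseteq\partial_\psi M$. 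If instead $y\in\tilde\tau_j$, then already $y\in\tilde\tau_i\cap\tilde\tau_j\subseteq\partial_\psi M$. Either way $\dist_\euclidean(x,\partial_\psi M)\le r\le c_\phi r$, so the constant is exactly $1$, there is no $O(r)$ correction, and only continuity of the projection is used. Your length-based version can also be made rigorous, via Federer's Lipschitz bound $\frac{\reach}{\reach-q}$ for the projection on the $q$-tube. The pointwise triangle inequality makes that unnecessary.
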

\begin{proof}
    We have two cases for $(\tau_i, \tau_j)$. Before proceeding, let $r_0 = \min(R_1, \min_{\sigma \in K} (\reach_{\tilde \sigma}))$, where $R_1$ is the radius constant appearing in the beginning of the proof of Proposition \ref{prop:C}.

    \vspace{2mm}
   \noindent \paragraph{Case 1.} Firstly suppose that $\tau_i, \tau_j \subset \sigma$ are lying on the same simplex of $K$, before subdivision. Now suppose that $x \in \ball(\tilde \tau_i, r) \cap \tilde \tau_j$, which implies that there is a point $y \in \tilde \tau_i$ so that $\dist_\euclidean(x, y) \le r$. Then there exists a continuous path $\gamma: [0, T] \rightarrow M$ so that $\gamma(0) = x$ and $\gamma(T) = y$ which satisfies $\dist_\euclidean(x, \gamma(t)) \le \dist_\euclidean(x, y)$ for all $t \in [0, T]$. 

    The following is one way to construct such a path $\gamma$, which is Lemma 2.1.6. of \cite{uzu_thesis}. Alternatively, one can use a geodesic for this purpose. Define $\bar\gamma(t) = (1-t)x + t y$, and define $\gamma(t) \in \tilde \sigma$ be the unique nearest-point projection from $\bar \gamma(t)$ to $\tilde \sigma$. The uniqueness of this nearest point is guaranteed by the assumption that $r \le r_0 \le \reach_{\tilde \sigma}$. Observe that $\| x-\gamma(t) \| \le \|x-\bar\gamma(t)\| + \|\bar \gamma(t) - \gamma(t)\| \le \|x-\bar\gamma(t)\| + \|y - \bar \gamma(t)\| = \|x-y\|$, which is the desired property of $\gamma$. Continuity of the nearest-point projection map is proven in Lemma 2.1.5. of \cite{uzu_thesis}.
    
    Equipped with the path $\gamma$, let $t_0$ be the first time $\gamma$ touches the boundary $\partial \tilde \tau_j$, i.e. it is the supremum of all times $t$ for which $\gamma([0, t]) \subset \tilde \tau_j$. Then:
    \begin{align*}
        \dist_\euclidean(x, \partial_\psi M) \le \dist_\euclidean(x, \gamma(t_0)) \le \dist_\euclidean(x, y) \le r
    \end{align*}
    The radius constant is $1$ for this case, and since $c_\phi = 2 / \sin(\Theta_\phi / 4) \ge 1$, we can use the same constant $c_\phi$ to account for Case 1.
    
    \vspace{2mm}
      \noindent\paragraph{Case 2.} Next suppose that $\tau_i \subset \sigma_i$ and $\tau_j \subset \sigma_j$, with $\sigma_i \neq \sigma_j \in K$. This case is already accounted for by Proposition \ref{prop:C}, because:
    \begin{align*}
        \big( \ball(\tilde \tau_i, r) \cap \tilde \tau_j \big) \subseteq \big( \ball(\tilde \sigma_i, r) \cap \tilde \sigma_j \big) \subseteq \ball(\partial_\phi M, c_\phi r) \subseteq \ball(\partial_\psi M, c_\phi r)
    \end{align*}
    This accounts for both cases and finishes the proof.
\end{proof}

\subsection{Rectification and Proposition \ref{prop:B}}
\label{subsection: fkt_short}

The following is a short summary of the FK-subdivision method of a simplicial complex; we give a self-contained derivation of it in Appendix \ref{appendix: fkt_theory}. The standard order simplex is defined as $\ordsimp{d} = \{(t_1, \ldots t_d)\>|\> 1 \ge t_1 \ge \cdots \ge t_d \ge 0\}$. The set of all simplices $\vecbf{n} + \permelt \ordsimp{d}$, where $\vecbf n \in \ZZ^d$ and $\permelt$ ranges over coordinate permutations, forms a triangulation $\FK(\RR^d)$ of the Euclidean space $\RR^d$. A standard fact is that the radially scaled simplices $\l \cdot \ordsimp{d} \subset \RR^d$ are sub-complexes of $\FK(\RR^d)$. Therefore, we obtain the subdivision $\FK_\l(\sigma)$ of a $d$-simplex $\sigma$ by pushforwarding $\FK(\RR^d)|_{\l \cdot \ordsimp{d}}$ to $\sigma$ by matching their vertices. Here, the order of vertices of $\sigma$ matters, and we explain this in Appendix \ref{appendix: fkt_vertex_ordering}. Finally, when we are given an affine simplicial complex $K$ along with an ordering of its vertices, we obtain $\FK_\l(K)$ by applying the FK-subdivision per every simplex of $K$.

At the end of this subsection, Proposition \ref{prop:B} is proven by rectifying FK-subdivisions of a given triangulation. We will need the following second-order control:

\begin{lem} \label{lem:calculus1}
    Let $\phi: U \rightarrow \RR^D$ be a $C^2$ map, where $U \subseteq \RR^d$ is a path-connected open subset. Then the following holds for all $x, y \in U$ and $v \in \RR^d$:
    \begin{align*}
        \|\phi(y) - \phi(x) - \Diff_x \phi(y-x) \| \le & \frac {h_\phi}2 \cdot \|y-x\|^2 \\
        \|\Diff_y \phi(v) - \Diff_x \phi(v) \| \le & h_\phi \cdot \|y-x \| \cdot \|v\|
    \end{align*}
    where $h_\phi$ is the supremum of the Hessian operator norm\footnote{Precise definition is $\big\| \hess_x(\phi) \big\|_\mathsf{op}^2 \eqdef
    \sup \bigg\{ \sum_{i=1}^D \big( u^\top \hess_x(\phi_i) v \big)^2 \>\bigg|\> \|u\|=\|v\|=1 \bigg\}$.}, $h_\phi \eqdef \sup_{x \in U} \big\| \hess_x(\phi) \big\|_\mathsf{op}$. 
\end{lem}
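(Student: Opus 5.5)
The plan is to use Taylor's theorem with integral remainder along the segment joining $x$ and $y$. One caveat comes first. The segment $[x,y]$ need not lie in $U$ when $U$ is merely path-connected, so I will run the argument for $x,y$ such that the segment $[x,y]\subseteq U$ (for instance, $U$ convex or $x,y$ close together). This is the only situation in which the lemma is used later, in the rectification argument, where points lie in a single simplex and $U$ is a small convex neighbourhood. For a general path-connected $U$ I would either restrict to this case or replace $\|y-x\|$ by the length of a path in $U$. I expect the only genuine subtlety to be this domain issue, together with checking that the operator norm in the footnote is the right one for bounding the vector-valued second derivative.

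Set $w=y-x$ and $\gamma(t)=x+tw$ for $t\in[0,1]$. For the second inequality, consider $g(t)=\Diff_{\gamma(t)}\phi(v)\in\RR^D$. Its $i$-th component is $\nabla\phi_i(\gamma(t))\cdot v$. Differentiating gives $g_i'(t)=w^\top\hess_{\gamma(t)}(\phi_i)\,v$. Hence
\[
\|g'(t)\|^2=\sum_{i=1}^D\big(w^\top\hess_{\gamma(t)}(\phi_i)v\big)^2\le \big\|\hess_{\gamma(t)}(\phi)\big\|_{\mathsf{op}}^2\|w\|^2\|v\|^2 ,
\]
by the definition of the operator norm in the footnote, after homogeneity rescaling of $u=w/\|w\|$ and $v/\|v\|$. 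Then
\[
\|\Diff_y\phi(v)-\Diff_x\phi(v)\|=\Big\|\int_0^1 g'(t)\,\diff t\Big\|\le\int_0^1\|g'(t)\|\,\diff t\le h_\phi\|w\|\|v\| .
\]

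For the first inequality, write $\phi(y)-\phi(x)=\int_0^1\Diff_{\gamma(t)}\phi(w)\,\diff t$ by the fundamental theorem of calculus applied componentwise. This gives
\[
\phi(y)-\phi(x)-\Diff_x\phi(w)=\int_0^1\big(\Diff_{\gamma(t)}\phi(w)-\Diff_x\phi(w)\big)\,\diff t .
\]
Apply the second inequality to the pair $x,\gamma(t)$, which is legitimate because $[x,\gamma(t)]\subseteq[x,y]\subseteq U$, with the vector $v=w$. The integrand then has norm at most $h_\phi\, t\|w\|\cdot\|w\|$. Integrating $t$ over $[0,1]$ gives $\tfrac{h_\phi}{2}\|w\|^2$, which is the claim. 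This ordering, proving the derivative bound first and feeding it into the first estimate, avoids any need for a vector-valued mean value theorem. Mean value theorems fail for $\RR^D$-valued maps, whereas integral bounds do not.
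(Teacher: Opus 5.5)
Your proof is correct and is essentially the paper's argument: the paper also differentiates $F_v(t)=\Diff_{x+t(y-x)}\phi(v)$ along the segment, bounds $\|F_v'(t)\|\le h_\phi\|y-x\|\|v\|$ using the footnoted operator norm, and integrates. It integrates once for the derivative bound and uses the double integral $\int_0^1\int_0^t F'_{y-x}(s)\,\diff s\,\diff t$ for the Taylor bound, which is the same as your feeding of the second inequality into the first. Your domain caveat is well-founded, since the paper's proof also silently needs $[x,y]\subseteq U$, so the hypothesis should really be convexity (or that the segment lies in $U$) rather than path-connectedness; this holds in all the later applications, where the points lie in a single simplex.
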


\medskip
The following is the key calculation needed to prove Proposition \ref{prop:B}.

\begin{prop}[Metric distortion of rectified triangulation] 
\label{prop:metric_distortion_rectification}
    Let $\phi: K \rightarrow M$ be a $C^2$ triangulation of a compact set $M \subset \RR^D$ and let $\psi: \widetilde{K} \rightarrow M$ be its rectification. Then for every pair of points $x, y \in \widetilde{K}$, the following holds if $\alpha\beta < 1$:
    \begin{align} \label{eqn:prop_distortion1}
        \frac{\dist_{M} (\psi(x), \psi(y) )}{\dist_{\widetilde{K}} (x, y)} \in \bigg[ \frac{1-\alpha \beta}{1+\alpha \beta}, \>\> \frac{1+\alpha \beta}{1-\alpha \beta} \bigg]
    \end{align}
    where $\alpha, \beta$ are given as follows\footnote{Note that $\beta$ depends on simplices of $K$, not simplices of $\widetilde K$.}:
    \begin{align*}
        \alpha = \frac{\sup_{x \in K} \| \hess_x \phi \|_{\mathsf{op}}}{\inf_{x \in K} \sing_{\min}(\Diff_x \phi)}, \quad \beta = \sqrt{d}\cdot \max_{\sigma \in K}  \frac{\diam(\sigma)^2}{\sthick(\sigma)}
    \end{align*}
\end{prop}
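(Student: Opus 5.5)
The plan is to work simplex by simplex and compare the derivative of $\psi$ with that of an isometry. On each $d$-simplex $\tau$ of $\widetilde K$ (the image under the secant map $\phi_\triangle$ of a simplex $\sigma\in K$ with vertices $u_0,\ldots,u_d$), we have $\psi|_\tau = \phi\circ A_\sigma^{-1}$. Here $A_\sigma := \phi_\triangle|_\sigma$ is the affine map with $A_\sigma(u_i)=\phi(u_i)$. Hence $\Diff\psi = \Diff_x\phi\circ (\Diff A_\sigma)^{-1}$, where $\Diff A_\sigma$ is the linear map taking $u_j-u_i$ to $\phi(u_j)-\phi(u_i)$. The aim is a relative bound
\[ \|\Diff_x\phi(w) - \Diff A_\sigma(w)\| \le \alpha\beta\,\|\Diff A_\sigma(w)\| \quad\text{for all } w \text{ tangent to } \sigma,\ x\in\sigma. \]
From it, $\|\Diff\psi(v)\|\in[(1-\alpha\beta)^{\pm1},(1+\alpha\beta)^{\pm1}]\|v\|$ roughly follows, so $\Diff\psi$ is a near-isometry in operator norm. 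The factor $(1-\alpha\beta)/(1+\alpha\beta)$ in \eqref{eqn:prop_distortion1} suggests bounding both $\|\Diff\psi\|$ and $\|\Diff\psi^{-1}\|$ by $(1+\alpha\beta)/(1-\alpha\beta)$. I would route the estimate through $\Diff_x\phi$ rather than through $\Diff A_\sigma$ directly.

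To get the bound, choose the vertex $u_i$ realising the maximum in $\sthick(\sigma)$. Write $w=A_{\sigma,i}c$ with $c\in\RR^d$, so that $\|c\|\le \|w\|/\sthick(\sigma)$. By Lemma \ref{lem:calculus1}, $\phi(u_j)-\phi(u_i) = \Diff_x\phi(u_j-u_i)+e_j$ with $\|e_j\|\le h_\phi \diam(\sigma)^2$; this uses the Taylor bound around $x$ twice, or around $u_i$ plus the derivative-variation bound. Then $\Diff A_\sigma(w)-\Diff_x\phi(w)=\sum_j c_j e_j$, which has norm at most $\sqrt d\,\|c\|\,h_\phi\diam(\sigma)^2 \le h_\phi\,\beta\,\|w\|$. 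Dividing by $\|\Diff_x\phi(w)\|\ge \sing_{\min}(\Diff_x\phi)\|w\|$ gives a relative error of at most $\alpha\beta$ with respect to $\Diff_x\phi$. It follows that $\Diff A_\sigma = \Diff_x\phi\,(I+E)$ with $\|E\|\le\alpha\beta$, interpreted on the image. Inverting, $\Diff\psi = \Diff_x\phi\,\Diff A_\sigma^{-1}$ has singular values in $[1/(1+\alpha\beta),\,1/(1-\alpha\beta)]$ on each open simplex. This yields the (looser-than-needed, hence sufficient) interval. A small care point is that $\Diff_x\phi$ must be extended to a neighbourhood, which the $C^2$-on-a-neighbourhood definition provides.

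Next, globalise to metrics. For the upper bound, take a $\dist_{\widetilde K}$-geodesic (a piecewise-linear path through finitely many simplices) from $x$ to $y$; its image under $\psi$ has length at most $\frac{1+\alpha\beta}{1-\alpha\beta}$ times its length, so $\dist_M\le$ that bound times $\dist_{\widetilde K}$. For the lower bound, take a near-minimising rectifiable path in $M$ between $\psi(x)$ and $\psi(y)$ and pull it back by the homeomorphism $\psi^{-1}$, which is piecewise $C^1$ on each closed simplex. Its length in $\widetilde K$ is at most $\frac{1+\alpha\beta}{1-\alpha\beta}$ times its length in $M$, because the inverse differential is bounded on each piece.

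I expect the main obstacle to be this pull-back step. A rectifiable path in $M$ may cross the images of simplex boundaries infinitely often. The way around it is to use that $\psi^{-1}$ is Lipschitz on each closed image simplex with the stated constant: the simplices are convex, so the mean value inequality applies along segments. One then argues locally Lipschitz implies length control by subdividing the path into finitely many pieces, using compactness and the fact that finitely many closed simplices cover $M$. It also needs checking that the intrinsic metric $\dist_M$ is realised by lengths of paths in $M$, which is its definition.
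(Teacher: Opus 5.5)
Your local estimate is correct, and the overall architecture matches the paper's. You compare $\Diff_x\phi$ with the secant differential using Lemma~\ref{lem:calculus1}, with the S-width entering through $\|c\|_1\le\sqrt d\,\|c\|_2\le\sqrt d\,\|w\|/\sthick(\sigma)$, then integrate along curves and obtain the lower bound from $\psi^{-1}$. The difference is where you Taylor-expand. You expand $\phi(u_j)$ and $\phi(u_i)$ around $x$ itself, so there is a single error vector $\sum_j c_j e_j$, measured against $\Diff_x\phi(w)$. The paper expands around the barycentre $\bar u$. It must therefore control two error vectors, $z_1=\Diff_x\phi_\triangle(v)-\Diff_{\bar u}\phi(v)$ and $z_2=\Diff_x\phi(v)-\Diff_{\bar u}\phi(v)$, relative to $z_0=\Diff_{\bar u}\phi(v)$. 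The quotient $\|z_0+z_1\|/\|z_0+z_2\|$ is what produces $\frac{1\pm\alpha\beta}{1\mp\alpha\beta}$. Your route puts the singular values of $\Diff\psi$ in $[\frac{1}{1+\alpha\beta},\frac{1}{1-\alpha\beta}]$, which is strictly sharper. For the lower bound, the paper only says ``apply the same logic to $\psi^{-1}$''. This tacitly treats curves in $M$ as piecewise $C^1$ relative to the triangulation. You are right that a rectifiable curve can cross image simplices infinitely often, so your treatment is more careful than the paper's.

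The justification you sketch for that step, however, does not work as written, and needs repair.
\begin{itemize}
\item \emph{Chordal Lipschitz bound.} The image simplex $\phi(\sigma)$ is curved. The mean value inequality along segments of the convex simplices $\sigma$ or $\tau$ gives upper bounds on $\|\psi(a')-\psi(b')\|$, whereas a chordal Lipschitz bound for $\psi^{-1}$ needs a lower bound. Applied globally to $\phi-A_\sigma$ on $[a,b]\subset\sigma$, this route only yields $\|\phi(b)-\phi(a)\|\ge\frac{1-2\alpha\beta}{1-\alpha\beta}\|\Diff A_\sigma(b-a)\|$. That is worse than the stated constant once $\alpha\beta>1/3$ and vacuous for $\alpha\beta\ge 1/2$. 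Use the estimate only at small scales instead. Lemma~\ref{lem:calculus1} at $a$ gives $\|\Diff_a\phi(b-a)\|\le\|\phi(b)-\phi(a)\|+\frac{h_\phi}{2}\|b-a\|^2$. Combined with your bound $\|\Diff A_\sigma(w)\|\le(1+\alpha\beta)\|\Diff_a\phi(w)\|$, this gives $\dist_{\widetilde K}(\psi^{-1}(p),\psi^{-1}(q))\le(1+\alpha\beta)(1+O(r))\|p-q\|$, uniformly, for $p,q$ in a common closed image simplex with $\|p-q\|\le r$.
\item \emph{Partition of the path.} ``Finitely many pieces'' cannot in general mean sub-arcs each contained in one image simplex. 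Instead, around each $\Gamma(t)$ choose a radius within which $M$ is covered by the closed image simplices containing $\Gamma(t)$; the others are compact and miss $\Gamma(t)$. Take a finite subcover of the parameter interval, and alternate partition points between the centres and points in the overlaps. Consecutive points then share a closed image simplex, and summing chords gives $\dist_{\widetilde K}(x,y)\le(1+\alpha\beta)(1+O(r))\,L(\Gamma)$.
\end{itemize}
Letting $r\to 0$ completes the argument.
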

\begin{proof} The proof consists of multiple steps. \\
    \textbf{\underline{Reduction to \eqref{eqn:metric_distortion_6}.}} The main claim \eqref{eqn:prop_distortion1} will be implied by the following:
    \begin{align} \label{eqn:metric_distortion_6}
        \frac{\| \Diff_x \phi(v) \|}{\| \Diff_x \phi_\triangle(v) \|} \in \bigg[ \frac{1 - \alpha \beta}{1+\alpha \beta}, \> \frac{1+\alpha \beta}{1- \alpha \beta} \bigg]
    \end{align}
    If we assume the above, the following holds for every $y \in \wtilde{K}$ and $u \in \tanspc_{x'} \wtilde{K}$ (here denote $x = \phi_\triangle^{-1}(y)$) :
    \begin{align*}
        \| \Diff_{y} \psi (u) \| = \big\| (\Diff_{x} \phi) ( (\Diff_{x} \phi_\triangle)^{-1} (u) ) \big\| \le \frac{1+\alpha \beta}{1-\alpha \beta} \big\| (\Diff_{x} \phi_\triangle) ( (\Diff_{x} \phi_\triangle)^{-1} (u) ) \big\| = \frac{1+\alpha \beta}{1-\alpha \beta} \|u\|
    \end{align*}
    This implies the following for all piecewise $C^1$ curves $\gamma: [0,T] \rightarrow \wtilde{K}$:
    \begin{align*}
        L(\psi \circ \gamma) = \int_0^T \| (\psi \circ \gamma)'(t) \| \diff t = \int_0^T \| \Diff_{\gamma(t)} \psi (\gamma'(t)) \| \diff t \le \int_0^T \frac{1+\alpha \beta}{1-\alpha \beta} \|\gamma'(t) \| \diff t = \frac{1+\alpha \beta}{1-\alpha \beta} L(\gamma)
    \end{align*}
    This implies \eqref{eqn:prop_distortion1} by taking infimum\footnote{The other inequality in \eqref{eqn:prop_distortion1} is implied by symmetry. Apply the above logic to $\psi^{-1} = \phi_\triangle \circ \phi^{-1}$ instead of $\psi$.}:
    \begin{align*}
        \dist_M( \psi(x), \psi(y)) = \inf \bigg\{ L(\psi \circ \gamma) \>\bigg|\> \gamma \text{ connects } (x, y) \bigg\} \le \frac{1+\alpha \beta}{1-\alpha \beta} \dist_{\wtilde{K}}(x, y)
    \end{align*}
    \textbf{\underline{Proof of \eqref{eqn:metric_distortion_6}.}} Suppose that $x \in \sigma$ where $\sigma \in K$ is a $d$-dimensional affine simplex. Since $\sigma$ is an affine simplex, we may assume that $\sigma \subset \RR^d$. Let the vertices of a simplex $\sigma \in K$ be $u_0, \ldots u_d \in \RR^d$ and let the centroid be $\bar{u} = \frac1{d+1}(u_0 + \cdots + u_d)$. The derivative $\phi_\triangle'$ of the affine linear map $\phi_\triangle$ is the same everywhere, given by linearly extending the following relation over all $(i, j)$:
    \begin{align*}
        \Diff_x \phi_\triangle(u_i - u_j) =& \phi(u_i) - \phi(u_j)
    \end{align*}
    Define the following:
    \begin{align*}
        z_0 =& \Diff_{\bar{u}} \phi(v), 
        \quad z_1 = \Diff_x \phi_\triangle(v) - \Diff_{\bar{u}} \phi(v),
        \quad z_2 = \Diff_{x} \phi(v) - \Diff_{\bar{u}} \phi(v)
    \end{align*}
    Then the following holds:
    \begin{align} \label{eqn:metric_distortion_3}
        & \frac{\| \Diff_{x} \phi_\triangle (v) \|}{\| \Diff_{x} \phi(v) \|}
        = \frac{\| z_1 + z_0 \|}{\| z_2 + z_0 \|}
        \in \bigg[ \frac{1 - \| z_1 \| / \| z_0 \| } {1 + \| z_2 \| / \| z_0 \|},
        \frac{1 + \| z_1 \| / \| z_0 \| } {1 - \| z_2 \| / \| z_0 \| } \bigg]
    \end{align}
    Thus we will obtain upper bounds for $\| z_1 \| , \| z_2 \| $ and a lower bound for $\| z_0 \|$. A lower bound of $\| z_0 \|$ is found by using the minimum singular value:
    \begin{align} \label{eqn:metric_distortion_1}
        \|\Diff_{\bar{u}} \phi(v) \| \ge \sing_{\min}(\Diff_{\bar{u}} \phi) \cdot \|v \|
    \end{align}
    \textbf{\underline{Upper bound of $\|z_1\|, \|z_2\|$.}} To find an upper bound of $\|z_1\|, \|z_2\|$, rewrite $v$ in a basis:
    \begin{align} \label{eqn:metric_distortion_2}
        v =& \sum_{i=1}^d a_i (u_i - u_0) = A_{\sigma, 0} \cdot \vecbf a
    \end{align}
    where $A_{\sigma, j} = [u_0 - u_j, \ldots u_d - u_j] \in \RR^{D \times d}$ is the matrix of edge vectors and $\vecbf a$ is a vector whose entries are $\vecbf a = (a_1, \ldots a_d) \in \RR^d$. Therefore,
    \begin{align*}
        z_1 =& \sum_{i=1}^d a_i \cdot \big( \phi(u_i) - \phi(u_0) - \Diff_{\bar{u}} \phi (u_i - u_0) \big) \\
        =& \sum_{i=1}^d a_i \cdot \big( (\phi( u_i) - \phi(\bar{u}) - \Diff_{\bar{u}} \phi (u_i - \bar{u})) - (\phi(u_0) - \phi(\bar{u}) - \Diff_{\bar{u}} \phi (u_0 - \bar{u})) \big) \\
        z_2 =& \sum_{i=1}^d a_i \cdot \big( \Diff_{x} \phi(u_i - u_0) - \Diff_{\bar{u}} \phi(u_i - u_0) \big)
    \end{align*}
    By Lemma \ref{lem:calculus1}, we bound the norm of each summand as follows (this is where the $C^2$ assumption on the triangulation $\phi$ is used):
    \begin{align*}
        \|\phi(u_i) - \phi(\bar{u}) - \Diff_{u} \phi(u_i - \bar{u} ) \| \le& \frac {h_\phi} 2 \cdot \|u_i - \bar{u} \|^2 \le \frac {h_\phi} 2 \cdot \diam(\sigma)^2 \\
        \| \Diff_{x}\phi (u_i - u_0) - \Diff_{\bar{u}} \phi(u_i - u_0) \| \le& h_\phi \cdot \| x - \bar{u}\| \cdot \|u_i - u_0\| \le h_\phi \cdot \diam(\sigma)^2
    \end{align*}
    where $h_\phi = \sup_{y \in K} \|\hess_y \phi \|$. Therefore,
    \begin{align} \label{eqn:metric_distortion_5}
        \|z_i\| \le \|\vecbf a\|_1 \cdot h_\phi \cdot \diam(\sigma)^2 \quad \text{ (for $i=1, 2$)}
    \end{align}
    Equipped with the lower bound for $\|z_0\|$ \eqref{eqn:metric_distortion_1} and the upper bound for $\|z_i\|$ \eqref{eqn:metric_distortion_5}, we then have the following (for $i=1,2$):
    \begin{align} \label{eqn:metric_distortion_4}
        \frac{\|z_i \| }{\| z_0 \|} \le \frac{\|\vecbf a\|_1 \cdot h_\phi \cdot \diam(\sigma)^2 }{\sing_{\min}(\Diff_{\bar u} \phi) \cdot \| A_{\sigma, 0} \cdot \vecbf a \| } \le \frac{h_\phi \cdot \diam(\sigma)^2}{\inf_{u' \in K} \sing_{\min}(\Diff_{u'} \phi)} \cdot \frac{\|\vecbf a\|_1}{\| A_{\sigma, 0} \cdot \vecbf a\|_2}
    \end{align}
    We can further tighten the above bound by replacing $A_{\sigma, 0}$ appearing in \eqref{eqn:metric_distortion_2} by $A_{\sigma, 1}, \ldots A_{\sigma, d}$ and taking the tightest bound possible, and this gives us the expression for S-width \eqref{eqn:simplex_quality}. Upon doing so, we exactly obtain the expression of $\alpha \beta$ stated in the proposition statement, giving the following (for $i=1,2$):
    \begin{align*}
        \frac{\|z_i\|}{\|z_0\|} \le \frac{h_\phi}{\inf_{u' \in K} \sing_{\min}(\Diff_{u'} \phi)} \cdot \max_{\sigma \in K} \frac{\diam(\sigma)^2}{\sthick(\sigma)}
    \end{align*}
    Plugging this into \eqref{eqn:metric_distortion_3} gives \eqref{eqn:metric_distortion_6}, finishing the proof of \eqref{eqn:prop_distortion1}.
\end{proof}

\medskip
\noindent \underline{Proof of (1) of Proposition \ref{prop:B}}.

Let $K_\l$ be the $\l$-th FK-subdivision of $K$ with respect to some vertex ordering, and let $\subdiv_\ell: K \rightarrow K_\ell$ be the subdivision map. Apply Proposition \ref{prop:metric_distortion_rectification} to the map $\phi_\ell = \phi \circ \subdiv_\ell^{-1}$. Since $\subdiv_\ell$ is an isometry, it does not affect local metric properties when we switch from $\phi$ to $\phi_\ell$. Let $\alpha_\l, \beta_\l$ be the parameters $\alpha, \beta$ in Proposition \ref{prop:metric_distortion_rectification} when we use $\phi_\l$ for $\phi$. Recall also that $\diam(\lambda \cdot \sigma) = \lambda \cdot \diam(\sigma)$ and $\sthick(\lambda \cdot \sigma) = \lambda \cdot \sthick(\sigma)$. This gives the following expression for $\gamma$ appearing in Proposition \ref{prop:B}:
    \begin{align*}
        & \alpha_\ell = \frac{\sup_{x \in K_\ell} \| \hess (\phi_\ell) \|}{\inf_{x \in K_\ell} \sing_{\min}(\Diff_x \phi_\ell)} = \frac{\sup_{x \in K} \| \hess (\phi) \|}{\inf_{x \in K} \sing_{\min}(\Diff_x \phi)} = \alpha_0 \\
        & \beta_\ell = \sqrt{d}\max_{\sigma \in K_\ell} \frac{\diam(\sigma)^2}{\sthick(\sigma)} = \sqrt{d}\max_{\sigma \in K} \frac{\diam(\sigma)^2 / \ell}{\sthick(\sigma)} = \frac{\beta_0}{\ell} \\
        &\qquad\qquad\implies \alpha_\l \beta_\l = \frac{\alpha_0 \beta_0}{\l} = \frac{\gamma}{\l} \text{, where } \gamma = \alpha_0 \beta_0
    \end{align*}

\medskip
\noindent \underline{Proof of (2) of Proposition \ref{prop:B}.} 

    As before, let $K_\ell$ be the $\ell$-th FK-triangulation of $K$ with respect to some vertex ordering $v_\bullet^K$. Fix an affine embedding $\sigma_0 \hookrightarrow \RR^d$ for every $d$-dim. simplex $\sigma_0 \in K$. Let $\eta: \RR^d \rightarrow \RR^d$ be an affine-linear isomorphism defined by $\eta(\vecbf e_i) = v_i^0 - v_{i-1}^0$, where $(v_0^0, \ldots v_d^0)$ are vertices of $\sigma_0$ (in the inherited vertex order $v_\bullet^{0} = v_\bullet^K|_{\sigma_0}$). This makes it so that $\eta(\vecbf e_0 + \cdots + \vecbf e_i) = v_i^0$ and $\eta (\ordsimp{d}) = \sigma_0$.
    
    Let $Q(\sigma_0)$ be the isometry classes of all $d$-dimensional simplices in the pushforwarded triangulation $\eta(\FK(\RR^d))$. By Lemma \ref{lem:fkt_isometry_bound}, $Q(\sigma_0)$ is a finite set of cardinality at most $d!$. Diameters of its elements are bounded as follows:
    \begin{align*}
        \max_{\tau \in Q(\sigma_0)} \diam(\tau) &= \max \big\{ \| \eta(s_1, \ldots s_d) \| \>\big|\> s_i \in \{-1,0,+1\} \big\} \\
        &= \max \big\{ \| \textstyle{\sum}_{i=1}^d s_i (v_i^0 - v_{i-1}^0) \| \>\big|\> s_i \in \{-1,0,+1\} \big\} \\
        &\le  \;d \cdot \diam(\sigma_0)
    \end{align*}
    Now given a simplex in the subdivision $\sigma \in K_\ell$, it is isometric to the rescaled simplex $\ell^{-1} \cdot \tau$ for some $\tau \in Q(\sigma_0)$. i.e. the following holds for some rotation $R \in O(d)$:
    \begin{align*}
        & \sigma = x + \l^{-1} \tau^R \subset \RR^d, \quad \text{where } \tau^R = R(\tau) \\
        \implies & u_i = x + \l^{-1} v_i^R \in \RR^d, \quad \text{ where } v_i^R = R(v_i)
    \end{align*}
    Define the following approximations $w_i \approx \phi(u_i)$ and $\hat \sigma, \bar \sigma \approx \phi(\sigma)$ (they all lie on $\RR^D$, the codomain of $\phi$):
    \begin{align*}
        w_i &= \phi(x) + \l^{-1} \cdot \Diff_{x} \phi(v_i^R) \\
        \bar \sigma &= \mathsf{ConvHull}(w_0, \ldots w_d) \\
        \hat \sigma &= \mathsf{ConvHull}(\phi(u_0), \ldots \phi(u_d)) \\
        \tau_{x, R} &= \Diff_x \phi( \tau^R)
    \end{align*}
    The simplex $\hat \sigma$ is a simplex of the rectified triangulations $\psi_\ell$ of Proposition \ref{prop:B}. We would like to bound its diameter and thickness. This will be done via controlling the quality of $\bar \sigma$, and then by quantifying the approximation $\hat \sigma \approx \bar \sigma$. Since $\bar \sigma = \phi(x) + \l^{-1} \cdot \Diff_x \phi(\tau^R) = \phi(x) + \l^{-1} \tau_{x, R}$, the simplices $(\bar \sigma, \tau_{x, R})$ are affine-equivalent. Therefore the following holds:
    \begin{align} \label{eqn:proof_thmb12_2}
        \diam(\bar \sigma)& = \l^{-1} \cdot \diam(\tau_{x, R}) \nonumber \\
        \thick(\bar \sigma) &=\thick(\tau_{x, R})
    \end{align}
    By the triangle inequality and Lemma \ref{lem:calculus1}, we get the following bound on differences in edge lengths of simplices $\hat \sigma$ and $\bar \sigma$ (here $L_{ij}(\sigma)$ is the edge length of a simplex $\sigma$ across vertices $(i, j)$):
    \begin{align} \label{eqn:proof_thmb12}
         \big| L_{ij}(\hat \sigma) - L_{ij}(\bar \sigma) \big| \nonumber 
        &\le  \big \| (\phi(u_i) - \phi(u_j)) - (w_i - w_j) \big \| \nonumber \\
        &\le \begin{aligned} &\| \phi(x + \l^{-1} v_i^R) - \phi(x) - \Diff_{x} \phi(\l^{-1} v_i^R )) \| +\\[-1ex]&\qquad \| \phi(x + \l^{-1} u_j) - \phi(x) - \Diff_{x} \phi(\l^{-1} v_j^R ) \|\end{aligned} \nonumber \\
        &\le  h_\phi \cdot \diam(\tau)^2 \cdot \l^{-2} \nonumber \\ 
        &\le  h_\phi \cdot d^2 \cdot \diam(\sigma_0)^2 \cdot \l^{-2}
    \end{align}
    Define the following constants:
    \begin{align} \label{eqn:scr_dt}
        & \mathscr{D}_\phi^- = \inf_{\tau,x,R} \diam(\tau_{x, R}), \quad \mathscr{D}_\phi^+ = \sup_{\tau,x,R} \diam(\tau_{x, R}) \nonumber \\
        & \mathscr{T}_\phi^- = \inf_{\tau,x,R} \thick(\tau_{x, R}), \quad \mathscr{T}_\phi^+ = \sup_{\tau,x,R} \thick(\tau_{x, R}) 
    \end{align}
    where the infimum and supremum are taken over $(\tau,x,R) \in Q(\sigma_0) \times K \times O(d)$. This is a compact set, implying that $\mathscr{D}_\phi^\pm > 0$ and $\mathscr{T}_\phi^\pm > 0$. By Equations \eqref{eqn:proof_thmb12_2} and \eqref{eqn:proof_thmb12}, we get the following bound on the diameter:
    \begin{align*}
        \diam(\hat \sigma) \le \diam(\tau_{x, R}) \cdot \frac1{\ell} + h_\phi \cdot d^2 \cdot \diam(\sigma_0)^2 \cdot \frac1{\ell^2}
    \end{align*}    
    Therefore,
    \begin{align*}
        \ell \ge \frac{h_\phi d^2 \diam(\sigma_0)^2}{\mathscr{D}_\phi^+} \implies \diam(\hat \sigma) \le \frac{2 \mathscr{D}_\phi^+}{\ell}
    \end{align*}
    We also apply the quality control lemma of \cite{riemannian_simplices} (Lemma 25) to bound thickness:
    \begin{align*}
        \thick(\hat \sigma) \ge & \frac{4 \thick(\bar \sigma) }{5\sqrt{d}} \bigg( 1 - \frac{4 h_\phi d^2 \diam(\sigma_0)^2 \ell^{-2} }{\diam (\bar \sigma) \thick(\bar \sigma)^2 } \bigg) \\
        =& \frac{4 \thick(\tau_{x,R}) }{5\sqrt{d}} \bigg( 1 - \frac{4 h_\phi d^2 \diam(\sigma_0)^2 \ell^{-1} }{\diam (\tau_{x,R}) \thick(\tau_{x,R})^2 } \bigg)
    \end{align*}
    Then the following implication holds:
    \begin{align*}
        \ell \ge \frac{8h_\phi d^2 \diam(\sigma_0)^2 }{\mathscr{D}^-_\phi (\mathscr{T}^-_\phi)^2 } \implies \thick(\hat \sigma) \ge \frac{2 \mathscr{T}_\phi^- }{5 \sqrt{d}}
    \end{align*}
    This proves Proposition \ref{prop:B}. The constants $c_1, c_2, \l_0$ in the theorem are given as follows:
\begin{align} \label{eqn:constc012}
    c_1 = 2 \mathscr{D}_\phi^+, \quad c_2 = \frac{2 \mathscr{T}_\phi^- }{5 \sqrt{d}}, \quad \l_0 = \max \bigg( \gamma, \> \frac{8h_\phi d^2}{\mathscr{D}^-_\phi \mathscr{T}^-_\phi}\cdot \max_{\sigma_0 \in K} \diam(\sigma_0)^2 \bigg)
\end{align}
Here $\gamma$ is the constant appearing in Proposition \ref{prop:B}.

\bigskip

\section{Proofs of technical lemmas}
\label{section: proofs_technical}

We present proofs of technical results that were postponed from earlier.

\bigskip
\begin{appendixproof}{Lemma \ref{lem:rips_cech_interleaving}}
    The following formulas hold (the \v Cech radius $r_{i, \Cech}(\pty)$ is a true minimum\footnote{Set $A_r = \cap_{y \in \pty} \ball(y, r) \neq \emptyset$. Then $\cap_{0 \le r \le 1} A_r \subseteq A_1$ is non-empty, since $A_1$ is compact. This tells us that the infimum is in fact a minimum.}.):
    \begin{align*}
        r_{i, \rips}(\pty) =& \max \big\{ \dist_i(y_1,y_2) \>|\> y_1 \neq y_2 \in \pty \big\} \\
        r_{i, \Cech}(\pty) =& \min \big\{ r \>|\> \cap_{y \in \pty} \ball(y, r) \neq \emptyset \big\}
    \end{align*}
    For the Rips case, the claim \eqref{eqn:r12_filt} follows straightforwardly from the above formula. For the \v Cech case, consider the following implications:
    \begin{align*}
        & R \ge r_{1,\Cech}(\pty) \implies \bigcap_{y \in \pty} \ball_{\dist_1}(y, R) \neq \emptyset \implies \bigcap_{y \in \pty} \ball_{\dist_2} (y, (1+\delta) R ) \neq \emptyset \implies (1+\delta) R \ge r_{2,\Cech}(\pty)
    \end{align*}
    By considering all possible values of $R$, this implies $r_{2,\Cech}(\pty) \le (1+\delta) r_{1,\Cech}(\pty)$. By switching the roles of $(\dist_1, \dist_2)$ and applying the above chain of implications, we also obtain $r_{1,\Cech}(\pty) \le (1+\delta) r_{2,\Cech}(\pty)$. This gives the claim for the \v Cech case.
\end{appendixproof}

\bigskip
\begin{appendixproof}{Lemma \ref{lem:interleaving}}
    Define the logarithmically rescaled filtration $L_r^{(i)} = K_{e^r}^{(i)}$. By Lemma \ref{lem:rips_cech_interleaving}, $\dist_2 / \dist_1 \in [(1+\Delta)^{-1}, (1+\Delta)]$ implies that $(L_\bullet^{(1)}, L_\bullet^{(2)})$ are $\log (1+\Delta)$-interleaved. By the isometry theorem of persistence modules \cite{bauer_lesnick}, there is a bound on the bottleneck distance:
    \begin{align*}
        \dist_{\mathsf{B}}(\pd_k(L_\bullet^{(1)}), \> \pd_k(L_\bullet^{(2)})) \le \log (1+\Delta)
    \end{align*}
    We recall that the bottleneck distance is realized by a matching between the points in the respective diagrams. 
    Therefore there exist decompositions $\pd_k(L_\bullet^{(i)}) = \wtilde{S}_i \sqcup \wtilde{S}_i'$ for $i=1,2$ such that the following properties hold. Firstly there are enumerations $\wtilde{S}_i = \{(\log b_j^{(i)}, \log d_j^{(i)}) \>|\> 1 \le j \le m\}$ for $i=1,2$ so that the following inequalities hold:
    \begin{align*}
        & \big| \log d_j^{(2)} - \log d_j^{(1)} \big| \le \log (1+\Delta) \iff d_j^{(2)} / d_j^{(1)} \in [(1+\Delta)^{-1}, (1+\Delta)] \\
        & \big| \log b_j^{(2)} - \log b_j^{(1)} \big| \le \log (1+\Delta) \iff b_j^{(2)} / b_j^{(1)} \in [(1+\Delta)^{-1}, (1+\Delta)]
    \end{align*}
    which implies:
    \begin{align*}
        \frac{d_j^{(2)}/b_j^{(2)}}{d_j^{(1)} / b_j^{(1)}} \in [(1+\Delta)^{-2}, (1+\Delta)^2]
    \end{align*}
    For the rest, every $(\log b, \log d) \in \wtilde{S}_i'$ satisfies $|\log d - \log b | \le 2\log (1+\Delta)$, i.e. $d/b \le (1+\Delta)^2$. 
    
    Without the logarithmic scaling, there are equivalent decompositions $\pd_k(K_\bullet^{(i)}) = S_i \sqcup S_i'$ and matched enumerations $S_i = \{(b_j^{(i)}, d_j^{(i)}) \>|\> 1 \le j \le m \}$ for $i=1,2$ so that the above bounds carry over. Denote $\Pi_k^{(i)} = \Pi_k(K_\bullet^{(i)})$ for brevity. By the matching, the following holds whenever $\alpha > (1+\Delta)^2$:
    \begin{align*}
        & \Pi_k^{(2)} \cap [(1+\Delta)^{2} \alpha, \infty) \>\>\subseteq\>\> \Pi_k^{(1)} \cap [\alpha, \infty) \>\>\subseteq\>\> \Pi_k^{(2)} \cap [(1+\Delta)^{-2} \alpha, \infty) \\
        \implies & \bigg| \Pi_k^{(1)} ([\alpha, \infty)) - \Pi_k^{(2)} ([\alpha, \infty)) \bigg| \le \Pi_k^{(2)} ([(1+\Delta)^{-2} \alpha, (1+\Delta)^2 \alpha))
    \end{align*}
    By symmetry, the minimum over $i=1,2$ on the right hand side of the main claim is also obtained. 
\end{appendixproof}

\begin{appendixproof}{Lemma \ref{lem:calculus1}}
    Compute the following:
    \begin{align*}
        \frac{\diff}{\diff t}\bigg|_{t=t_0} \Diff_{x + tu} \phi_i(v) = \sum_{j,k=1}^d u_j v_k \frac{\partial^2 \phi_i}{\partial t_j \partial t_k}\bigg|_{t=t_0} = u^\top \hess_{x(t_0)}(\phi_i) v
    \end{align*}
    where $x(t) = x+tu$. Also,
    \begin{align*}
        \bigg\| \frac{\diff}{\diff t}\bigg|_{t=t_0} \Diff_{x + tu} \phi(v) \bigg\|^2 = \sum_{i=1}^D \bigg( \frac{\diff}{\diff t}\bigg|_{t=t_0} \Diff_{x + tu} \phi_i(v) \bigg)^2 = \sum_{i=1}^D \bigg( u^\top \hess_{x+t_0 u}(\phi_i) v \bigg)^2 \le h_\phi^2 \|u\|^2 \|v\|^2
    \end{align*}
    Let $F_v(t) = \Diff_{x + t(y-x)} \phi(v)$. By the above, $\|F'_v(t_0)\| \le h_\phi \| y-x \| \|v\|$. 
    
    Therefore, both $A = \| \phi(y) - \phi(x) - \Diff_x \phi(y-x) \|$ and $B = \| \Diff_y \phi(v) - \Diff_x \phi(v) \|$ satisfy the following:
    \begin{align*}
        A =& \bigg\| \int_0^1 (F_{y-x}(t) - F_{y-x}(0)) \diff t \bigg\| = \bigg\| \int_0^1 \int_0^t F'_{y-x}(s) \diff s \diff t \bigg\| \le \frac {h_\phi} 2 \|y-x\|^2 \\
        B = & \| F_{v}(1) - F_{v}(0) \| = \bigg\| \int_0^1 F'_{v}(t) \diff t \bigg\| \le h_\phi \|y-x\| \|v\|
    \end{align*}    
\end{appendixproof}

\bigskip
\begin{appendixproof}{Lemma \ref{lem:inward_cone_decomp}}
\label{proof: inward_cone_decomp}
    If $\sigma$ is an affine $d$-simplex, we may consider $\phi' = \phi \circ \psi$ where $\psi: \regsimp{d} \rightarrow \sigma$ is a linear isomorphism from the regular $d$-simplex, so that $\phi': \regsimp{d} \rightarrow \sigma$ is a $C^2$ embedding. Therefore without loss of generality, we can assume that $\sigma = \regsimp{d}$. The main idea of the proof is to pullback the objects to the standard simplex, prove the decomposition there  \eqref{eqn:cone_decomposition_regular} and push them forward by $\phi$.

    Let $\sigma = \regsimp{d}$, let $x \in \partial \tilde \sigma$, and let $\vecbf t \in \partial \sigma$ be the point such that $\phi(\vecbf t) = x$. Without loss of generality, we may set $\vecbf t = (t_0, \ldots t_d, 0, \ldots 0)$ where $t_i > 0$ and $\sum_i t_i = 1$. Define the following pullbacks: $\mathcal{C} = \phi^*(\incone_x \tilde \sigma)$, $V_{\tanspc} = \phi^*(\tanspc_x \tilde \sigma)$ and $V_{\normspc} = \phi^*(\normspc_x \tilde \sigma)$ (the pullback is defined by applying the inverse of the differential of $\phi$ at $x$). Since $\phi$ is a $C^2$ embedding, the pullback preserves dimension and transversality\footnote{In this context, linear subspaces $(V, W)$ intersect transversally if $\dim(V\cap W) = 0$.} of the tangent and normal spaces. Thus $(V_{\tanspc}, V_{\normspc})$ intersect transversally, although not necessarily orthogonally. We then have $V_{\tanspc} \oplus V_{\normspc} = V$, where $V \cong \RR^D$ is the set of all vectors $(s_0, \ldots s_D) \in \RR^{D+1}$ such that $\sum_i s_i = 0$. Since $\vecbf t = (t_0, \ldots t_d, 0, \ldots 0)$ and since the pullback by $\phi$ preserves tangentiality, the following holds:
    \[ V_{\tanspc} = \bigg\{ (s_0, \ldots s_d, 0, \ldots 0) \>\big|\> \sum_i s_i = 0 \bigg\} \]
    Recall that we defined $\mathcal{C} = \phi^*( \incone_x \tilde\sigma)$ above. Since the inward cone is the set of all initial velocity vectors of curves staying in $\tilde \sigma$, we have $\mathcal{C} = \phi^*( \incone_x \tilde\sigma) = \incone_{\vecbf t} \sigma$. By elementary calculation for the standard simplex $\sigma = \regsimp{d}$, this is given as follows:
    \begin{align*}
        \mathcal{C} = \incone_{\vecbf t} \sigma = \bigg\{ (s_0, \ldots s_D) \>\big|\> \sum_i s_i = 0 \text{, and } s_{d+1} \ge 0, \ldots s_D \ge 0 \bigg\}
    \end{align*}
    Due to the linear decomposition $V_{\normspc} \oplus V_{\tanspc} = V$, we have the following homeomorphism:
    \begin{align} \label{eqn:cone_decomposition_regular}
        (\mathcal{C} \cap V_{\normspc}) \times V_{\tanspc} \cong & \>\> \mathcal{C} \nonumber \\
        (v, w) \mapsto & \>\> v+w
    \end{align}
    This decomposition can be seen from the following general argument. Let $\{f_1, \ldots f_{D-d} \}$ be linear forms defined by $f_i(s_0, \ldots s_D) = s_{d+i}$. Then the cone $\mathcal{C}$ is the set of all $\vecbf s \in V$ for which $f_1(\vecbf s) \ge 0, \ldots f_{D-d}(\vecbf s) \ge 0$. Also, for every $\vecbf s \in V_{\tanspc}$, we have $f_1(\vecbf s) = \cdots = f_{D-d}(\vecbf s) = 0$. Therefore for every $(v, w) \in (\mathcal{C} \cap V_{\normspc}) \times V_{\tanspc}$, we have $f_i(v+w) = f_i(v) + f_i(w) = f_i(v) \ge 0$, so that $v+w \in \mathcal{C}$. Conversely for every $u \in \mathcal{C}$, if $u = v+w$ for $(v,w) \in V_{\normspc} \times V_{\tanspc}$, we have $f_i(v) = f_i(u-w) = f_i(u) \ge 0$, so that $v \in V_{\normspc} \cap \mathcal{C}$. This shows the above homeomorphism.

    Finally we show the main claim. Note that $\phi^*(\nincone_x \tilde \sigma) = \phi^*(\incone_x \tilde \sigma \cap \normspc_x \tilde \sigma) = \mathcal{C} \cap V_{\normspc}$. Since  $V_{\tanspc}, V_{\normspc}, \mathcal{C}$ are defined as pullbacks, we obtain the following by applying the pushforward $\phi_*$:
    \[ \phi_*(V_{\tanspc}) = \tanspc_x \tilde \sigma, \>\>\> \phi_*(V_{\normspc}) = \normspc_x \tilde \sigma, \>\>\> \phi_*(\mathcal C) = \incone_x \tilde \sigma, \>\>\> \phi_*(\mathcal C \cap V_{\normspc}) = \nincone_x \tilde \sigma \]
    By the homeomorphism $(\mathcal C \cap V_{\normspc}) \times V_{\tanspc} \cong \mathcal{C}$, we then have the following, as claimed:
    \begin{align*}
        \nincone_x \tilde \sigma \times \tanspc_x \tilde \sigma = \phi_* (\mathcal{C} \cap V_{\normspc}) \times \phi_* (V_{\tanspc}) \cong \phi_*(\mathcal{C}) = \incone_x \tilde \sigma
    \end{align*}
\end{appendixproof}

\begin{appendixproof}{Lemma \ref{lem:variational1}}
\label{proof: variational1}
    Before working with $g(\lambda)$, first note that the conditions $\epsilon \le \sin(\theta/4)$ and $\theta \le \pi$ imply $\epsilon \le \sin(\pi/4)$. The derivative of $g(\lambda)$ is given by:
    \[ g'(\lambda) = \frac{\lambda - \ell \cos \theta}{\sqrt{\ell^2 + \lambda^2 - 2\ell \lambda \cos \theta}} - \epsilon \]
    The equation $g'(\lambda) = 0$ has one root over the range $-\infty < \lambda < \infty$. This can be seen by solving for an auxiliary  variable $\tilde \lambda = \lambda - \ell \cos \theta$ and tracking sign correctly when taking a square or squareroot. This root $\lambda_0$ is given as follows:
    \begin{align*}
        g'(\lambda_0) = 0 \iff \lambda_0 = \ell \cos \theta + \frac{\epsilon \ell \sin \theta}{\sqrt{1-\epsilon^2}}
    \end{align*}
    We claim that $\lambda_0 \ge 0 \iff \cos \theta \ge -\epsilon$. To see this, first assume $\theta \in (0, \pi)$, so that $\sin \theta > 0$. We have $\lambda_0 \ge 0$ iff $\cos \theta + \epsilon \sin \theta / \sqrt{1-\epsilon^2} \ge 0$. Dividing both sides by $\sin \theta$, we get $f(\epsilon) \le f(- \cos \theta)$ where $f(t) = t/\sqrt{1-t^2}$ is an increasing function over $t \in (0,1)$. Therefore $\lambda_0 \ge 0 \iff \cos\theta \ge -\epsilon$ when $\theta \in (0, \pi)$. The case $\theta = \pi$ can be checked individually.

   \noindent \underline{Case 1.} Suppose $\cos \theta \ge -\epsilon$. We claim that $\lambda_0 \in [0, \epsilon \reach]$. By the previous calculation we have $\lambda_0 \ge 0$. Setting $t = \arcsin(\epsilon)$, the following holds:
    \[ \lambda_0 = \ell \cos \theta +  \frac{\ell \epsilon \sin \theta}{ \sqrt{1-\epsilon^2}} = \frac{\ell \cos(\theta - t)}{\cos t} \le \frac{\ell \cos t}{\cos t} \le \ell \le \epsilon \reach \]
    In the above, the second equality is by a trigonometric addition formula, and the first inequality is due to\footnote{Since arcsin is an increasing function, the condition $\epsilon \le \sin(\theta/4)$ implies that $t = \arcsin(\epsilon) \le \arcsin(\sin(\theta/4)) = \theta/4$. Here $\arcsin(\sin(\theta/4)) = \theta/4$ because $\theta/4 \in [0, \pi/4]$. Thus $t \le \theta/4 \le \theta/2 \le \theta$, so that $\cos(\theta - t) \le \cos(t)$.} the condition $\epsilon \le \sin(\theta/4)$. The critical value at $\lambda_0$ is given as follows, along with a lower bound:
    \begin{align*}
        g(\lambda_0) = \sqrt{1-\epsilon^2} \ell \sin \theta - \epsilon \ell (1+\cos \theta) \ge (1-\epsilon) \ell \sin \theta - 2\epsilon \ell \ge \ell \sin \theta - 3 \epsilon \ell
    \end{align*}
    By checking the extremal values\footnote{In the range $\lambda \in [0, \infty)$, we have $g(0) = (1-\epsilon) \ell$, which satisfies $g(0) \ge g(\lambda_0)$ because $g(\lambda_0) = \sqrt{1-\epsilon^2} \ell \sin \theta - \epsilon \ell (1+\cos\theta) \le \ell - \epsilon \ell - \epsilon \ell \cos \theta < (1-\epsilon) \ell = g(0)$. Also we have $g(\lambda) = \sqrt{\ell^2 \sin^2 \theta + \tilde \lambda^2} - \epsilon \tilde \lambda - \epsilon \ell (1+\cos \theta) \ge (1-\epsilon )\tilde \lambda - \epsilon \ell (1+\cos \theta)$ where $\tilde \lambda = \lambda - \ell \cos \theta$. Thus $g(\lambda)$ grows unboundedly large as $\lambda \rightarrow \infty$.}, we can see that the above critical value is the minimum over the range $\lambda \in [0, \epsilon \reach]$. The assumption $\epsilon \le \sin(\theta/4)$ implies\footnote{The conditions $\epsilon \le \sin(\theta/4)$ and $\theta \le \pi$ imply that $\epsilon \le \sin(\pi/4)$. Then the assumption $\cos \theta \ge -\epsilon$ for Case 1 is equivalent to $\theta \le (\pi/2) + \arcsin(\epsilon)$, which implies $\theta \le (\pi/2) + (\pi/4) = (3/4)\pi$. Thus $\theta_+ = \theta$.} that $\theta_+ = \theta$, and thus the claimed lower bound follows for Case 1.

     \noindent \underline{Case 2.} Suppose $\cos \theta < -\epsilon$. Then $\lambda_0 < 0$. Since $g'(0) = -\cos\theta -\epsilon > 0$, $g$ is an increasing function, and the global minimum is reached at $0$. This value is simply $g(0) = (1-\epsilon) \ell$. Then the claimed lower bound $g(0) \ge \ell (\sin \theta_+ - 3\epsilon)$ is easily checked to be true.

    We thus covered both Case 1 and Case 2, and the main claim is proven. We additionally summarize the argmin and the minimum for completeness.
    \begin{align*}
        \operatorname{argmin}_{0 \le \lambda \le \epsilon \reach} g(\lambda) =& \begin{cases}
            \ell \cos \theta + \frac{\epsilon \ell \sin \theta}{\sqrt{1-\epsilon^2}} & \text{ If $\cos \theta \ge -\epsilon$} \\
            0 & \text{If $\cos \theta \le -\epsilon$}
        \end{cases} \\
        \min_{0 \le \lambda \le \epsilon \reach} g(\lambda) =& \begin{cases}
            \sqrt{1-\epsilon^2} \ell \sin \theta - \epsilon \ell (1+\cos \theta) & \text{ If $\cos \theta \ge - \epsilon$} \\
            (1-\epsilon)\ell & \text{If $\cos \theta < -\epsilon$}
        \end{cases}
    \end{align*}
\end{appendixproof}

\begin{appendixproof}{Lemma \ref{lemma: interference1}}
\label{proof: interference1}
    Firstly since $\ell \le (\sqrt{2}-1) \reach$ and $\sqrt{\lambda_\normspc^2 + \lambda_\tanspc^2} \le (\sqrt{2}-1)\reach$, we can apply the geodesic control (Lemma \ref{lem:geodesic_control}) to obtain $\|x - \bar x \| \le \reach^{-1} \ell^2$ and $\|y - \bar y \| \le \reach^{-1} (\lambda_\normspc^2 + \lambda_\tanspc^2)$. Applying this and rewriting some expressions, we have the following:
    \begin{align*}
        \|x-y\| \ge & \|\bar x - \bar y \| - \reach^{-1}( \ell^2 + \lambda_\normspc^2 + \lambda_\tanspc^2) \\
        =& \|\ell u - \lambda_\normspc v_\normspc - \lambda_\tanspc v_\tanspc \| - \reach^{-1}( \ell^2 + \lambda_\normspc^2 + \lambda_\tanspc^2) \\
        =& \sqrt{w^2 + \lambda_\tanspc^2}  - \reach^{-1} \lambda_\tanspc^2 - \reach^{-1} (\ell^2 + \lambda_\normspc^2), \quad \text{where } w \eqdef \|\ell u - \lambda_\normspc v_\normspc \| \\
        =& \reach \cdot h \big( \reach^{-2} (w^2 + \lambda_\tanspc^2) \big) + \reach^{-1} (w^2 - \ell^2 - \lambda_\normspc^2), \quad \text{where } h (t) \eqdef \sqrt{t} - t
    \end{align*}
    We claim that the above lower bound is minimized at $\lambda_\tanspc = 0$ (while fixing $\ell$ and $\lambda_\normspc$ constant). To see this, observe that $h(t) = \sqrt{t} - t$ is unimodal over $t \in [0, 1]$ with its maximum reached at $t = 1/4$. Meanwhile we have $\reach^{-2}(w^2 + \lambda_\tanspc^2) \le (2/5)^2 + (1/5)^2 \le 1/4$ by the assumptions on their range. Therefore $h \big( \reach^{-2} (w^2 + \lambda_\tanspc^2) \big)$ is an increasing function on our fixed range of $\lambda_\tanspc$ and therefore the above lower bound reaches its minimum at $\lambda_\tanspc = 0$.
    
    Setting $\lambda_\tanspc = 0$, the above lower bound becomes the following (recall that $\theta = \angle(u, v_\normspc)$):
    \begin{align*}
        \|x-y \| \ge& w - \reach^{-1} (\ell^2 + \lambda_\normspc^2) \\
        =& \sqrt{\ell^2 + \lambda_\normspc^2 - 2 \ell \lambda_\normspc \cos \theta} - \reach^{-1} (\ell^2 + \lambda_\normspc^2) \\
        \ge & \sqrt{\ell^2 + \lambda_\normspc^2 - 2 \ell \lambda_\normspc \cos \theta} - \epsilon (\ell + \lambda_\normspc)
    \end{align*}
    where we used the assumptions $\ell, \lambda_\normspc \in [0, \epsilon \reach]$. This now fits the form in Lemma \ref{lem:variational1}, so that the above lower bound is further lower bounded as:
    \begin{align*}
        \|x-y\| \ge \ell ( \sin \theta_+ - 3 \epsilon) \text{, where } \theta_+ = \min\bigg( \theta, \frac{3\pi} 4 \bigg)
    \end{align*}
\end{appendixproof}

\pagebreak

\bibliographystyle{amsplain}
\bibliography{refs}

\pagebreak
\appendix

\section{Notations} 
\label{appendix: notations}

We summarize the notations and conventions used in this article.

\begin{figure}[h]
\centering
    \begin{tabular}{|l|l|}
    
        \hline
        \textbf{Symbol} & \textbf{Description}

        \\ \hline \hline
        $\ptx, \pty$ & Set of points

        \\ \hline
        $\rips, \Cech$ & Rips and \v Cech filtrations

        \\ \hline
        $\|\vecbf v\|_p$ & The $p$-norm of a vector $\vecbf v$, where $\|\vecbf v\| = \|\vecbf v\|_2$.
        
        \\ \hline
        $\vecbf e_i$ & The standard basis vectors $(0, \ldots 0, 1, 0, \ldots 0)$
        
        \\ \hline
        $\mathbf{1}[\mathcal{A}]$ & The indicator function for a statement $\mathcal{A}$.

        \\ \hline
        $\ball_M(\vecbf x, r)$ & The closed ball of radius $r$, centered at $\vecbf x$, in the metric space $M$.

        \\ \hline
        $\dist_\euclidean(x, y)$ & The Euclidean distance between points $x, y$.
        
        \\ \hline
        $\dist_M(x, y)$ & The distance between points $x, y$ in a metric space $M$.

        \\ \hline
        $d$ & The (intrinsic) dimension of a triangulable space

        \\ \hline
        $D$ & The (ambient) dimension of $\RR^D$ that contains a triangulable space

        \\ \hline
        $\Diff_x f$ & The differential of a function $f$ at $\vecbf x$.

        \\ \hline
        $\reach_S$ & The reach of a set $S$.

        \\ \hline
        $\pointprocess_{nf}$ & Poisson point process with intensity function $nf$

        \\ \hline
        $\func(\ptx)$ & Functional evaluated at $\ptx$

        \\ \hline
        $\overline{\func}(f)$ & Expected functional, defined as $\EE[ \func(\pointprocess_f)]$

        \\ \hline
    \end{tabular}
\end{figure}

\section{Freudenthal-Kuhn subdivision} \label{appendix: fkt_theory}

Let us denote $\indexset{a, b} = \big\{ n \in \ZZ \>\big|\> a \le n \le b \big\}$. We define the $I$-face $\partial_I \ordsimp{d}$ as follows. Let $d \in \ZZ_+$ be a positive integer and let $I = \{u(0), \ldots u(m)\} \subseteq \{0, \ldots d \}$ where we assume $u(0) < \cdots < u(m)$. Then we define:
\begin{align*}
    \partial_I \ordsimp{d} \eqdef & \bigg\{ (\underbrace{1, \ldots 1}_{u(0)}, \underbrace{t_1, \ldots t_1}_{u(1)-u(0)}, \ldots \underbrace{t_m, \ldots t_m}_{u(m)-u(m-1)}, \underbrace{0, \ldots 0}_{d-u(m)}) \>\bigg|\> 1 \ge t_1 \ge \cdots \ge t_m \ge 0 \bigg\}
\end{align*}

The FK triangulation is defined in the following order:
\begin{enumerate}
    \item FK-triangulation of a Euclidean space (Definition \ref{defn:fkt_euclid})
    \item FK-triangulation of a simplex (Definition \ref{defn:fkt_spx})
    \item FK-triangulation of a simplicial complex (Definition \ref{defn:fkt_spcpx})
\end{enumerate}

Let $\permgrp_d$ denote the permutation group on $d$ elements. We will use the notation $\permelt \in \permgrp_d$ for a permutation. The action of $\permgrp_d$ on $\RR^d$ is defined by setting $\permelt \cdot \vecbf e_i = \vecbf e_{\permelt(i)}$ or $(\permelt \cdot (t_1, \ldots t_d) )_{\permelt(i)} = t_i$, or equivalently:
\begin{align*}
    \permelt \cdot (t_1, \ldots t_d) = (t_{\permelt^{-1}(1)}, \ldots t_{\permelt^{-1}(d)})
\end{align*}
Given $I = \{u(0), \ldots u(m)\} \subseteq \indexset{0, d}$ with $u(0) < \cdots < u(m)$, the \textit{coordinate projection} $\pi_I: \RR^d \rightarrow \RR^m$ is the following map (note that $t_{u(0)}$ does not appear below even though $u(0) \in I$):
\begin{align*}
    \pi_I(t_1, \ldots t_d) = (t_{u(1)}, \ldots t_{u(m)})
\end{align*}

\begin{defn}[FK-triangulation of a Euclidean space] \label{defn:fkt_euclid}
    A \textit{FK-simplex} is a simplex of the form $\vecbf n + \permelt (\partial_I \ordsimp{d})$ for some $\vecbf n \in \ZZ^d$, $\permelt \in \permgrp_d$, and $I \subseteq \indexset{0, d}$. 
    The \textit{FK-triangulation} of $\RR^d$ is a $d$-dimensional infinite simplicial complex whose simplices are the set of all FK-simplices. We denote the FK-triangulation of $\RR^d$ by the notation $\mathsf{FK}(\RR^d)$.
\end{defn}

The defining equations of a FK-simplex are given as follows:

\begin{lem}
    Suppose $\vecbf n \in \ZZ^d$, $\permelt \in \permgrp_d$, and $I = \{u(0), \ldots u(m)\} \subseteq \indexset{0, d}$ with $u(0) < \cdots < u(m)$. Then $(t_1, \ldots t_d) \in \vecbf n + \permelt (\partial_I \simp_d)$ holds iff the following hold:
    \begin{align} \label{eqn:fksimp_defn}
        1 & = t_{\permelt(1)} = \cdots = t_{\permelt(u(0))} \ge t_{\permelt(u(0)+1)} = \cdots = t_{\permelt(u(1))} \ge \cdots \nonumber \\
        & \cdots \ge t_{\permelt(u(m-1)+1)} = \cdots = t_{\permelt(u(m))} \ge t_{\permelt(u(m)+1)} = \cdots = t_{\permelt(d)} = 0
    \end{align}
\end{lem}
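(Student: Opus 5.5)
We need to prove this lemma: $(t_1,\ldots,t_d)\in \vecbf n+\permelt(\partial_I\simp_d)$ iff the chain of equalities and inequalities \eqref{eqn:fksimp_defn} holds. Here the $t_i$ are read as coordinates after subtracting $\vecbf n$, i.e.\ of $\vecbf t-\vecbf n$; the statement implicitly uses translated coordinates. The plan is to reduce to $\vecbf n=0$ and then unwind the two definitions directly.

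\textbf{Reduction and unwinding.} Since translation by $\vecbf n$ is a bijection, $\vecbf t\in \vecbf n+\permelt(\partial_I\ordsimp{d})$ iff $\vecbf t-\vecbf n\in \permelt(\partial_I\ordsimp{d})$. So assume $\vecbf n=0$. Next, $\vecbf t\in\permelt(\partial_I\ordsimp{d})$ iff $\permelt^{-1}\cdot\vecbf t\in\partial_I\ordsimp{d}$. By the definition of the action, $(\permelt\cdot \vecbf s)_{\permelt(i)}=s_i$. Hence $\vecbf s=\permelt^{-1}\cdot\vecbf t$ has coordinates $s_i=t_{\permelt(i)}$.

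\textbf{Matching the blocks.} By definition, $\vecbf s\in\partial_I\ordsimp{d}$ iff the following hold:
\begin{itemize}
\item the first $u(0)$ coordinates of $\vecbf s$ equal $1$;
\item for each $j=1,\ldots,m$, the block of coordinates indexed by $u(j-1)+1,\ldots,u(j)$ is constant, with common value $t_j'$;
\item $1\ge t_1'\ge\cdots\ge t_m'\ge0$;
\item the coordinates indexed by $u(m)+1,\ldots,d$ vanish.
\end{itemize}
Substituting $s_i=t_{\permelt(i)}$ turns these conditions verbatim into the chain \eqref{eqn:fksimp_defn}. Constancy within a block becomes the equalities, and the ordering of the block values becomes the inequalities between consecutive blocks, bounded by $1$ on the left and $0$ on the right.

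For the converse, given the chain, define $t_j'$ as the common value of block $j$. The chain then yields exactly the parametrized form, so $\vecbf s\in\partial_I\ordsimp{d}$.

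\textbf{Main obstacle.} The only delicate point is bookkeeping, not mathematics.
\begin{itemize}
\item The action convention must be applied correctly: it is $t_{\permelt(i)}$, not $t_{\permelt^{-1}(i)}$, that appears. I would verify this on a transposition.
\item Degenerate cases must be checked: $u(0)=0$ (empty block of $1$'s) and $u(m)=d$ (empty block of $0$'s). In these cases the corresponding portion of the chain is vacuous, and the extreme inequalities $1\ge t_1'$ and $t_m'\ge0$ must still be retained explicitly.
\end{itemize}
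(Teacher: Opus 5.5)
Your proof is correct and is essentially the paper's argument: the paper also takes the block chain as the defining condition for $\partial_I\ordsimp{d}$, then uses that $\vecbf t\in\vecbf n+\permelt(\partial_I\ordsimp{d})$ iff $\permelt^{-1}(\vecbf t-\vecbf n)\in\partial_I\ordsimp{d}$. Your explicit check that $(\permelt^{-1}\cdot\vecbf t)_i=t_{\permelt(i)}$, your observation that the chain must be read on $\vecbf t-\vecbf n$ rather than on $\vecbf t$, and your treatment of the empty blocks when $u(0)=0$ or $u(m)=d$ make precise what the paper leaves implicit.
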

\begin{proof}
    The $m$-simplex $\partial_I \ordsimp{d}$ is spanned by the vectors $\vecbf e_{0} + \cdots + \vecbf e_{u(i)}$ for $i=0, \ldots m$. Therefore $(t_1, \ldots t_d) \in \partial_I \ordsimp{d}$ holds iff the following is true:
    \begin{align*}
        1 & = t_1 = \cdots = t_{u(0)} \ge t_{u(0)+1} = \cdots = t_{u(1)} \ge \cdots \\
        & \cdots \ge t_{u(m-1)+1} = \cdots = t_{u(m)} \ge t_{u(m)+1} = \cdots = t_{d} = 0
    \end{align*}
    Since $\vecbf t \in \vecbf n + \permelt (\partial_I \simp_d)$ holds iff $\permelt^{-1}(\vecbf t - \vecbf n) \in \partial_I \simp_d$ holds, the main claim follows.
\end{proof}

A FK-simplex satisfies the following nice properties:

\begin{prop} \label{prop:fksimp_properties}
    The following are true.
    \begin{enumerate}
        \item Vertices of the FK-simplex $\vecbf n + \permelt(\partial_I \ordsimp{d})$ are $(\vecbf u_0, \ldots \vecbf u_m)$ where $\vecbf u_i = \sum_{j=1}^{u(i)} \vecbf e_{\permelt(j)}$ and $I = \{u(0), \ldots u(m) \} \subseteq \indexset{0, d}$ with $u(0)< \cdots < u(m)$.
        \item The intersection of a FK-simplex with the affine span of another FK-simplex is a FK-simplex.
        \item A coordinate projection of a FK-simplex is a FK-simplex. 
    \end{enumerate}
\end{prop}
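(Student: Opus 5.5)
All three parts will be read off the characterization \eqref{eqn:fksimp_defn} from the preceding lemma. The idea is to rewrite that chain of equalities and inequalities as a system of linear equalities and inequalities in the shifted coordinates $\vecbf s = \vecbf t - \vecbf n$, and then manipulate the system directly.

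\textbf{Part (1).} I will use the fact, noted in the proof of the previous lemma, that $\partial_I \ordsimp{d}$ is the convex hull of the points $\vecbf e_1 + \cdots + \vecbf e_{u(i)}$ for $i = 0, \ldots, m$. The empty sum is taken to be $0$, so the vertex for $i=0$ is the first $u(0)$ basis vectors summed. These points are affinely independent because their successive differences have disjoint supports. Applying the linear permutation action gives $\permelt(\vecbf e_j) = \vecbf e_{\permelt(j)}$, so the vertices of $\permelt(\partial_I \ordsimp{d})$ are $\sum_{j \le u(i)} \vecbf e_{\permelt(j)}$. Translating by $\vecbf n$ then gives the vertices of the FK-simplex. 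The statement as printed omits $\vecbf n$; I will read it as describing vertices up to this translation.

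\textbf{Part (2).} Take two FK-simplices $\tau = \vecbf n + \permelt(\partial_I \ordsimp{d})$ and $\tau'$. By \eqref{eqn:fksimp_defn}, the affine span of $\tau'$ is cut out by equations of two kinds: $t_a - n'_a = t_b - n'_b$ for pairs $a,b$ in the same block, and $t_a - n'_a \in \{0,1\}$ for the first and last blocks. On $\tau$, every coordinate $s_a = t_a - n_a$ lies in $[0,1]$. So each equation from $\tau'$, restricted to $\tau$, has one of two forms: $s_a = s_b + k$ with $k \in \ZZ$, or $s_a = c$ with $c \in \ZZ$.

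Since all $s_a \in [0,1]$, such an equation is either impossible on $\tau$, or forces $s_a = s_b$, or forces the extreme pair $\{s_a, s_b\} = \{0,1\}$, or forces $s_a \in \{0,1\}$. Every one of these outcomes amounts to merging blocks of the chain in \eqref{eqn:fksimp_defn}. Merging blocks corresponds to deleting indices from $I$, which yields a face $\partial_{I'}$ of the same simplex, or the empty set. For example, $s_a = s_b + 1$ forces $s_a = 1$ and $s_b = 0$, which merges the block of $a$ into the leading $1$-block and the block of $b$ into the trailing $0$-block. Therefore the intersection is again a FK-simplex.

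\textbf{Part (3).} The coordinate projection $\pi_J$ keeps a subset of coordinates. Restricting the chain in \eqref{eqn:fksimp_defn} to those coordinates gives a chain of the same type in fewer variables: the surviving coordinates, ordered by $\permelt$, still form consecutive blocks of equal values, non-increasing from $1$ down to $0$, and the translation becomes $\pi_J(\vecbf n)$. The image is convex and equals the convex hull of the projected vertices. This will show it is $\pi_J(\vecbf n) + \permelt'(\partial_{I'} \ordsimp{m})$, where $\permelt'$ is the induced ordering and $I'$ records the induced block boundaries, with empty blocks removed.

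\textbf{Main obstacle.} The hardest step is the bookkeeping in (2), namely the case analysis showing that every integer-shifted equation forces a block merge and nothing else. I will handle it first by reducing to $\vecbf n = 0$ and $\permelt = \mathrm{id}$, applying the inverse transformation to both simplices, which is legitimate because the class of FK-simplices is preserved by integer translations and coordinate permutations.
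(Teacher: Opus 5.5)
Your proposal is correct and follows essentially the paper's route. Part (1) is read off from the definition of $\partial_I \ordsimp{d}$; part (2) imposes the affine-span equations of the second simplex on the chain \eqref{eqn:fksimp_defn} of the first; part (3) projects the vertices and recognises the FK form. Your version is more careful than the paper's in three places: you track the integer offsets $s_a = s_b + k$ that the paper's reduction to faces of $\ordsimp{d}$ glosses over, you allow for an empty intersection, and you discard coinciding projected vertices when a block misses the retained coordinates.
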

\begin{proof}
    (1) This follows directly from the definition of $\partial_I \ordsimp{d}$.
    
    (2) It suffices to prove this for the intersection $(\vecbf n + \permelt \sigma) \cap \mathsf{span}(\tau)$, where $\sigma, \tau$ are both faces of the standard order simplex $\ordsimp{d}$. First note that $\sigma = \partial_I \ordsimp{d}$ for some $I$, which means that $\vecbf n + \permelt \sigma$ is defined by \eqref{eqn:fksimp_defn}. The affine span $\mathsf{span}(\tau)$ is defined by expressions of the form $t_i = t_j$ and $t_k = m$ for some tuples $(i, j)$ and $(k, m)$ (here $m \in \{0, 1\}$). Applying these constraints to \eqref{eqn:fksimp_defn} produces another set of equalities and inequalities of the same form as \eqref{eqn:fksimp_defn}, just with different $(\vecbf n, \permelt, I)$. Therefore there exist $\vecbf n', \permelt, \sigma'$ such that $(\vecbf n + \permelt \sigma) \cap \mathsf{span}(\tau) = \vecbf n' + \permelt' \sigma'$ holds, as claimed.

    (3) This follows from considering projections of the vertices. Let $I_j = \permelt \cdot \indexset{u(j-1)+1, u(j)}$. Let $J \subseteq \indexset{1, d}$ be such that $|J| = \ell$ and also let $\eta: \indexset{1, \ell} \rightarrow J$ be the unique increasing bijection. The projection of the $i$-th vertex of $\vecbf n + \permelt \sigma$ is given as follows:
    \begin{align*}
        \pi_J \bigg( \sum_{j \le i} \sum_{k \in I_j } \vecbf e_k \bigg) =& \sum_{j \le i} \sum_{k \in \eta^{-1}(I_j \cap J)} \vecbf e_k
    \end{align*}
    The sets $J_j = \eta^{-1}(I_j \cap J)$ satisfy $J_0 \sqcup \cdots \sqcup J_m = \indexset{1, \ell}$. There exists a permutation $\permelt' \in \permgrp_\ell$ and values $u'(0), \ldots, u'(m)$ such that $(\permelt')^{-1} J_j = \indexset{u'(j-1)+1, u'(j)}$, so that $J_j = \permelt' \cdot \indexset{u'(j-1)+1, u'(j)}$. This is of the same form as the vertices described in (1), which proves the claim that the projected vertices form another FK-simplex.
\end{proof}

\begin{prop} \label{prop:fkt_ksimp}
    The set $k \cdot \ordsimp{d}$ triangulates into FK-simplices as follows:
    \begin{align} \label{eqn:fkt_kfold_simplex}
        k \cdot \ordsimp{d} = \bigcup_{I \subseteq \indexset{d-1}} \bigcup_{\substack{\vecbf n \in \vecbf N_{d, k} [I] \\ \permelt \in \permgrp_d [I]}} (\vecbf n + \permelt \ordsimp{d})
    \end{align}
    where $\vecbf N_{d, k} [I] \subset \ZZ^d$ and $\permgrp_d [I] \subseteq \permgrp_d$ are defined as:
    \begin{align*}
        \vecbf N_{d, k} [I] =& \bigg\{(n_1, \ldots n_d) \in \ZZ^d \>\bigg|\> k-1 \ge n_1 \ge \cdots \ge n_d \ge 0 \text{, and } (i \in I \iff n_i = n_{i+1}) \bigg\} \\
        \permgrp_d [I] =& \bigg\{ \permelt \in \permgrp_d \>\bigg|\> i \in I \implies \permelt^{-1}(i) < \permelt^{-1}(i+1) \bigg\}
    \end{align*}
\end{prop}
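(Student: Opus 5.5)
Since every piece on the right-hand side of \eqref{eqn:fkt_kfold_simplex} is a $d$-dimensional FK-simplex, and FK-simplices already form a triangulation of $\RR^d$ by Definition \ref{defn:fkt_euclid}, the only thing to prove is the set equality; compatibility along faces is inherited from $\FK(\RR^d)$. The plan is to compare the defining inequalities on both sides.

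First I reduce membership in an FK-simplex to coordinates. By the lemma giving \eqref{eqn:fksimp_defn} with $I=\indexset{0,d}$, a point $\vecbf t$ lies in $\vecbf n+\permelt\ordsimp{d}$ iff the fractional parts $\vecbf s=\vecbf t-\vecbf n$ satisfy $1\ge s_{\permelt(1)}\ge\cdots\ge s_{\permelt(d)}\ge 0$. The target set $k\cdot\ordsimp{d}$ is $\{k\ge t_1\ge\cdots\ge t_d\ge 0\}$.

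\emph{Inclusion $\supseteq$.} Take $\vecbf n\in\vecbf N_{d,k}[I]$, $\permelt\in\permgrp_d[I]$ and $\vecbf s$ as above, and set $\vecbf t=\vecbf n+\vecbf s$. I check $t_i\ge t_{i+1}$ for each $i$. If $i\notin I$, then $n_i\ge n_{i+1}+1$ and $s_i\ge 0\ge s_{i+1}-1$, so $t_i\ge t_{i+1}$. If $i\in I$, then $n_i=n_{i+1}$, and the condition $\permelt^{-1}(i)<\permelt^{-1}(i+1)$ places $i$ before $i+1$ in the chain, so $s_i\ge s_{i+1}$. The bounds $t_1\le k$ and $t_d\ge 0$ follow from $n_1\le k-1$, $s_1\le 1$, $n_d\ge 0$ and $s_d\ge 0$.

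\emph{Inclusion $\subseteq$.} This is the main step. Given $\vecbf t\in k\cdot\ordsimp{d}$, I choose the integer parts $n_i$ so that $\vecbf n$ is non-increasing and lies in $[0,k-1]$. A naive floor fails at integer coordinates, for example at $t_1=k$. The fix is to use $n_i=\lceil t_i\rceil-1$, clamped to $0$ when $t_i=0$. Then $s_i=t_i-n_i\in(0,1]$, or $s_i=0$ when $t_i=0$, and monotonicity of $\vecbf t$ gives monotonicity of $\vecbf n$. Next let $I=\{i: n_i=n_{i+1}\}$. Within each maximal block of consecutive indices sharing the same $n$, the coordinates $s_i$ are non-increasing in $i$. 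I choose $\permelt$ to sort all $s_i$ in decreasing order, breaking ties by increasing index. This makes $\permelt^{-1}(i)<\permelt^{-1}(i+1)$ whenever $i\in I$, because then $s_i\ge s_{i+1}$ and ties are broken in index order. Hence $\vecbf n\in\vecbf N_{d,k}[I]$ and $\permelt\in\permgrp_d[I]$.

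The point requiring care is the boundary case where some $t_i$ is an integer. There the fractional part may legitimately be $0$ or $1$, and a careless choice of $n$ breaks either the monotonicity of $\vecbf n$ or the range $[0,k-1]$. The ceiling convention above handles both issues. Finally, the decomposition is a genuine subcomplex: the pieces $\vecbf n+\permelt\ordsimp{d}$ are top cells of $\FK(\RR^d)$, so they have disjoint interiors and meet along common FK-faces, by Proposition \ref{prop:fksimp_properties}(2).
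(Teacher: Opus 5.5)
Your proof is correct and follows the paper's argument. The $\supseteq$ direction is identical, and for $\subseteq$ both proofs split $\vecbf t$ into integer and fractional parts and then sort the fractional parts.

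The only difference is how boundary points are handled. The paper takes $\vecbf t$ in the interior $k>t_1>\cdots>t_d>0$ and uses $n_i=\lfloor t_i\rfloor$. There the strict inequalities force $s_i>s_{i+1}$ for $i\in I$, so any sorting permutation lies in $\permgrp_d[I]$. The paper then passes to all of $k\cdot\ordsimp{d}$ because the finite union is closed. Your ceiling-minus-one convention with index tie-breaking instead handles boundary points directly and gives an explicit cell containing each point.
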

\begin{proof}
    ($\supseteq$) Let $I \subseteq [d-1]$ be an index set, let $\vecbf n \in \vecbf N_{d, k}[I]$ and let $\permelt \in \permgrp_d[I]$. To show that $\vecbf n + \permelt (\ordsimp{d}) \subseteq k \cdot \ordsimp{d}$, it suffices to show the following implication:
    \begin{align} \label{eqn:k_ordsimp1}
        1 \ge t_{\permelt(1)} \ge \cdots \ge t_{\permelt(d)} \ge 0 \implies k \ge t_1 + n_1 \ge \cdots \ge t_d + n_d \ge 0
    \end{align}
    If $i \notin I$, then $n_i > n_{i+1}$, and thus $t_i + n_i \ge n_i \ge 1 + n_{i+1} \ge t_{i+1} + n_{i+1}$. If $i \in I$, then $n_i = n_{i+1}$ and also $\permelt^{-1}(i) < \permelt^{-1}(i+1)$. Writing $i = \permelt(\permelt^{-1}(i))$, we see that $t_i = t_{\permelt(\permelt^{-1}(i))} \ge t_{\permelt(\permelt^{-1}(i+1))} = t_{i+1}$, so that $t_i + n_i = t_i + n_{i+1} \ge t_{i+1} + n_{i+1}$.

    ($\subseteq$) Conversely, suppose that $(\tilde t_1, \ldots \tilde t_d) \in k \cdot \ordsimp{d}$, i.e. $k > \tilde t_1 > \cdots > \tilde t_d > 0$ (note that this is the interior set of the left hand side $k \cdot \ordsimp{d}$). Define $n_i = \lfloor \tilde t_i \rfloor$ and $t_i = \tilde t_i - n_i$. Then $0 \le t_i < 1$ holds and also $k-1 \ge n_1 \ge \cdots \ge n_d \ge 0$. Define $\permelt \in \permgrp_d$ to be any permutation such that $t_{\permelt(1)} \ge \cdots \ge t_{\permelt(d)}$. Define the index set $I \subseteq [d-1]$ as the collection of all $i$ such that $n_i = n_{i+1}$, which implies $\vecbf n \in \vecbf N_{d, k}[I]$. 
    
    Whenever $i \in I$, we have $n_i = n_{i+1}$, so that $t_i - t_{i+1} = (t_i + n_i) - (t_{i+1} + n_{i+1}) = \tilde t_i - \tilde t_{i+1} > 0$, i.e. $t_i > t_{i+1}$. Now since $t_{\permelt(1)} \ge \cdots \ge t_{\permelt(d)}$ and by writing $i = \permelt(\permelt^{-1}(i))$, we see that $i \in I$ implies $\permelt^{-1}(i) < \permelt^{-1}(i+1)$. Therefore $\permelt \in \permgrp_d[I]$. Therefore the given element $(\tilde t_1, \ldots \tilde t_d) \in k \cdot \ordsimp{d}$ also satisfies $(\tilde t_1, \ldots \tilde t_d) \in \vecbf n + \permelt(\ordsimp{d})$ with $\vecbf n \in \vecbf N_{d, k}[I]$ and $\permelt \in \permgrp_d[I]$ for some $I \subseteq [d-1]$. This implies that the interior set $k \cdot \ordsimp{d}$ is the subset of the right hand side. By taking closure of both sides and since the right hand side is a closed set already, we obtain the full inclusion of the left hand side $k \cdot \ordsimp{d}$ to the right hand side.
\end{proof}

\begin{prop} \label{prop:fkt_face_restriction}
    Given $J \subset \indexset{0, d}$ and $k \in \ZZ_+$, the FK-simplex $k \cdot \partial_J \ordsimp{d}$ and the affine span $\mathsf{span}(\partial_J \ordsimp{d})$ each triangulates into a subcomplex of the FK-triangulation of $\RR^d$, and the following hold:
    \begin{align*}
        \mathsf{FK}(\RR^d)|_{\mathsf{span}(\partial_J \ordsimp{d})} =& (\pi_J)_* \big( \mathsf{FK}(\RR^{|J|}) \big) \\
        \mathsf{FK}(\RR^d)|_{k \cdot \partial_J \ordsimp{d}} =& (\pi_J)_* \big( \mathsf{FK}(\RR^{|J|})|_{k \cdot \ordsimp{|J|}} \big)
    \end{align*}
    where $(\pi_J)_*$ is the pushforward along the coordinate projection $\pi_J$.
\end{prop}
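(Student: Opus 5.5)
The plan is to prove the first identity (the restriction to the affine span) and then get the second by intersecting with $k\cdot\partial_J\ordsimp{d}$ and invoking Proposition \ref{prop:fkt_ksimp} in dimension $|J|$. Write $J=\{u(0)<\cdots<u(m)\}$. The span $\mathsf{span}(\partial_J \ordsimp{d})$ is cut out by the equations of \eqref{eqn:fksimp_defn} with $\permelt=\mathrm{id}$ and $\vecbf n=0$, where the inequalities are dropped. Concretely, $t_1=\cdots=t_{u(0)}=1$, the coordinates are constant on each block $\indexset{u(j-1)+1,u(j)}$, and $t_{u(m)+1}=\cdots=t_d=0$. Consequently the coordinate projection $\pi_J$ restricted to this span is an affine bijection onto $\RR^m$. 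Its inverse $\iota_J$ copies $s_j$ onto the $j$-th block and fills in the constants $1$ and $0$. Moreover, $\iota_J$ maps $\ZZ^m$ onto the integer points of the span. So $(\pi_J)_*$ is well defined as the pushforward along this identification. Below I read $\mathsf{FK}(\RR^{|J|})$ as the FK-triangulation of the $m$-dimensional space $\pi_J(\RR^d)$.

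First I will show that the restriction $\mathsf{FK}(\RR^d)|_{\mathsf{span}(\partial_J\ordsimp{d})}$, meaning the collection of sets $\tau\cap\mathsf{span}(\partial_J\ordsimp{d})$ for FK-simplices $\tau$, is a subcomplex and consists of FK-simplices. By Proposition \ref{prop:fksimp_properties}(2) each such intersection is itself an FK-simplex of $\RR^d$. It lies in the span, so it is a genuine simplex of $\mathsf{FK}(\RR^d)$. The union of these simplices is the whole span, since the FK-simplices cover $\RR^d$. Hence the restriction is a subcomplex triangulating the span.

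Next, for such a simplex $\tau'$ lying in the span, Proposition \ref{prop:fksimp_properties}(3) shows that $\pi_J(\tau')$ is an FK-simplex of $\RR^m$. This gives the inclusion $\pi_J(\mathsf{FK}(\RR^d)|_{\mathrm{span}})\subseteq \mathsf{FK}(\RR^m)$. For the reverse inclusion I will check directly that $\iota_J$ maps FK-simplices to FK-simplices. Take an $\RR^m$-simplex $\vecbf n'+\permelt'(\partial_{I'}\ordsimp{m})$, whose defining system of equalities and inequalities is given by \eqref{eqn:fksimp_defn}. Substituting $t=\iota_J(s)$ turns it into a system in $t$ of exactly the same chain form. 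In this chain the blocks of $J$ become tied groups, and the leading coordinates equal to $1$ and the trailing coordinates equal to $0$ are appended at the ends. One chooses $\permelt$ to list the indices of $\indexset{1,u(0)}$ first, then the blocks in the order dictated by $\permelt'$, and the zero indices last. One chooses $\vecbf n=\iota_J(\vecbf n')-\iota_J(0)$ on the block coordinates and $0$ elsewhere.

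Since both complexes triangulate the same space and one is contained in the other, they coincide. Here I use that two triangulations of the same space by simplices, one containing the other, must be equal: every simplex of the larger complex has an interior point lying in the interior of some simplex of the smaller complex, and since both are simplicial complexes the two simplices agree.

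For the second identity I will intersect both sides of the first with $k\cdot\partial_J\ordsimp{d}$. Its image under $\pi_J$ is exactly $k\cdot\ordsimp{m}$, because $\pi_J$ carries the vertex $k(\vecbf e_1+\cdots+\vecbf e_{u(i)})$ to $k(\vecbf e_1+\cdots+\vecbf e_i)$. There is a wrinkle: with $u(0)>0$ the leading coordinates are $1$ rather than $k$. I would handle this by noting that the face is the affine image of the span scaled by $k$, and that the FK complex is invariant under integer translations. By Proposition \ref{prop:fkt_ksimp} applied in dimension $m$, the set $k\cdot\ordsimp{m}$ is a union of FK-simplices of $\RR^m$, so it is a subcomplex. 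Pulling back by $\iota_J$ then shows that $k\cdot\partial_J\ordsimp{d}$ is a subcomplex, with the stated restriction.

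I expect the main obstacle to be the bookkeeping in the reverse inclusion. Specifically, one must verify that the chain produced from \eqref{eqn:fksimp_defn} for $\RR^m$ by substitution really has FK form, with a single permutation and integer translate. This is also where the normalisation of $\partial_J$ versus $k\cdot\partial_J$ has to be fixed carefully.
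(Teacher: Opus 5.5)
Your proposal is correct and follows essentially the same route as the paper. The paper also uses Proposition \ref{prop:fksimp_properties}(2) to obtain a subcomplex triangulating the span, (3) to show that $\pi_J$ sends its simplices to FK-simplices of $\RR^m$, the explicit inverse $\iota_J$ for the reverse direction, and Proposition \ref{prop:fkt_ksimp} together with $\pi_J(k\cdot\partial_J\ordsimp{d})=k\cdot\ordsimp{m}$ for the second identity. Two things you do are more careful than the paper. First, substituting into the chain \eqref{eqn:fksimp_defn} is a cleaner justification of the $\iota_J$ step than the paper's vertex argument. Second, you catch two points the paper glosses over: the target dimension is $m=|J|-1$, and $k\cdot\partial_J\ordsimp{d}$ lies in the integer translate of the span by $(k-1)(\vecbf e_1+\cdots+\vecbf e_{u(0)})$ rather than in the span itself, which translation invariance of $\mathsf{FK}(\RR^d)$ correctly repairs.
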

\begin{proof}
    Denote $W = \mathsf{span}(\partial_J \ordsimp{d})$ and let $J = \{u(0), \ldots u(m)\} \subseteq \indexset{0, d}$ with $u(0)< \cdots < u(m)$. Firstly by (2) of Proposition \ref{prop:fksimp_properties}, the intersection of any FK-simplex with $W$ is a FK-simplex, and therefore $W$ triangulates into a set of FK-simplices, i.e. a subcomplex of the FK-triangulation of $\RR^d$.

    We now consider the pushforward of the FK-triangulation of $W$ through the map $\pi_J$. The map $\pi_J: W \rightarrow \RR^m$ is a linear isomorphism, so it has an inverse $\pi_J: \RR^m \rightarrow W$ given as:
    \begin{align*}
        \pi_J(t_1, \ldots t_d) =& (t_{u(1)}, \ldots t_{u(m)}) \\
        \iota_J(s_1, \ldots s_m) =& (\underbrace{1, \ldots 1}_{u(0)}, \underbrace{s_1, \ldots s_1}_{u(1)-u(0)}, \ldots \underbrace{s_m, \ldots s_m}_{u(m)-u(m-1)}, \underbrace{0, \ldots 0}_{d-u(m)})
    \end{align*}
    By (2), (3) of Proposition \ref{prop:fksimp_properties}, the following holds for some vectors $\vecbf n', \vecbf n''$, index sets $I', I''$, and permutations $\permelt', \permelt''$ :
    \begin{align*}
        \pi_J \big( W \cap (\vecbf n + \permelt (\partial_I \ordsimp{d}) ) \big) = \pi_J (\vecbf n' + \permelt' (\partial_{I'} \ordsimp{d}) ) = \vecbf n'' + \permelt '' (\partial_{I''} \ordsimp{m})
    \end{align*}
    And by (1) of Proposition \ref{prop:fksimp_properties}, the map $\iota_J$ sends a vertex of a FK-simplex to a vertex of another FK-simplex. Thus the following holds for some index set $I'$ and a permutation $\permelt'$:
    \begin{align*}
        \iota_I ( \vecbf n + \permelt (\partial_I \ordsimp{m})) = \vecbf n' + \permelt' (\partial_{I'} \ordsimp{d})
    \end{align*}
    Therefore, the pushforward of the FK-triangulation of $W$ along the map $\pi_J: W \rightarrow \RR^m$ is the FK-triangulation of $\RR^m$ (and conversely the pushforward of the FK-triangulation of $\RR^m$ along the map $\iota_J: \RR^m \rightarrow W$ is the FK-triangulation of $W$).

    The second statement regarding the restriction to $k \cdot \partial_I \ordsimp{d}$ follows from noting that $\pi_J(\partial_J \ordsimp{d}) = \ordsimp{|J|}$, and also Proposition \ref{prop:fkt_ksimp}.
\end{proof}

Due to the above proposition, we can now define the FK-triangulation of a geometric simplex, although one needs to fix an ordering of the vertices to define one. 

\begin{defn}[FK-triangulation of a simplex] \label{defn:fkt_spx}
    Let $\sigma \subset \RR^D$ be an affine $d$-simplex with an ordering of vertices given by $v^\sigma_\bullet = (v_0^\sigma, \ldots v_d^\sigma)$. The $k$-th FK-triangulation of $(\sigma, v^\sigma_\bullet)$ is defined as follows:
    \begin{align*}
        \FK_k(\sigma, v^\sigma_\bullet) \eqdef \phi_* \big( \mathsf{FK}(\RR^d)|_{k \cdot \ordsimp{d}} \big)
    \end{align*}
    where the FK-triangulation on $k \cdot \ordsimp{d}$ is described by \eqref{eqn:fkt_kfold_simplex}, $\phi$ is the affine linear map defined by setting $\phi(k \cdot (\vecbf e_1 + \cdots + \vecbf e_i)) = v_i^\sigma$ for every $i = 0, 1, \ldots d$, and $\phi_*$ denotes the pushforward. This triangulation is independent of the linear embedding $\sigma \hookrightarrow \RR^D$.
\end{defn}

The following says that the restriction of a FK-triangulation of a simplex is also a FK-triangulation, and it inherits the vertex order. It is the key result of this subsection.

\begin{prop} \label{prop:fkt_welldefined}
    Let $\sigma \subset \RR^D$ be an affine simplex and let $\tau \subset \sigma$ be its face. Let $v^\sigma_\bullet = (v_0, \ldots v_d)$ and $v^\tau_\bullet = (v_{u(0)}, \ldots v_{u(m)})$ be the (unique) orderings of the vertices of $\sigma$ and $\tau$ such that $u(0) < \ldots < u(m)$. Then the following holds:
    \begin{align*}
        \FK_k(\sigma, v_\bullet^\sigma)|_\tau = \FK_k(\tau, v_\bullet^\tau)
    \end{align*}
\end{prop}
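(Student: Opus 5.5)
The plan is to reduce to the model situation in $\RR^d$ and then apply Proposition \ref{prop:fkt_face_restriction}. Both sides are independent of the affine embedding (Definition \ref{defn:fkt_spx}), so we may take $\sigma = \ordsimp{d}$ up to the affine map $\phi$ with $\phi(k(\vecbf e_1+\cdots+\vecbf e_i)) = v_i$. Under $\phi^{-1}$, the face $\tau$ with vertices $v_{u(0)},\ldots,v_{u(m)}$ becomes $k\cdot\partial_J\ordsimp{d}$ with $J = \{u(0),\ldots,u(m)\}$. Indeed, the vertices of $\partial_J\ordsimp{d}$ are $\vecbf e_1+\cdots+\vecbf e_{u(i)}$, and these map to $v_{u(i)}$ after scaling by $k$. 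The left-hand side is then $\phi_*\big(\FK(\RR^d)|_{k\cdot\ordsimp{d}}\big)|_{\phi(k\partial_J\ordsimp{d})}$, which equals $\phi_*\big(\FK(\RR^d)|_{k\cdot\partial_J\ordsimp{d}}\big)$. This uses the fact that $k\cdot\partial_J\ordsimp{d}\subseteq k\cdot\ordsimp{d}$ is a union of FK-simplices, which follows from Proposition \ref{prop:fkt_face_restriction}. Restriction to a subcomplex commutes with restricting further, so the two expressions agree.

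Next, apply the second identity of Proposition \ref{prop:fkt_face_restriction}:
\[
\FK(\RR^d)|_{k\cdot\partial_J\ordsimp{d}} = (\pi_J)_*\big(\FK(\RR^m)|_{k\cdot\ordsimp{m}}\big),
\]
where $(\pi_J)_*$ is understood as the pushforward along the inverse isomorphism $\iota_J$. Consequently the left-hand side is $(\phi\circ\iota_J)_*\big(\FK(\RR^m)|_{k\cdot\ordsimp{m}}\big)$. By Definition \ref{defn:fkt_spx}, it then suffices to check that $\phi\circ\iota_J$ is the affine map $\psi$ with $\psi(k(\vecbf e_1+\cdots+\vecbf e_i)) = v_{u(i)}$ for $i=0,\ldots,m$, since that map defines $\FK_k(\tau, v^\tau_\bullet)$.

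This is a vertex computation. The map $\iota_J$ is linear on the relevant coordinates, apart from the block of leading ones, which scales with $k$ on the scaled face. It sends $k(\vecbf e_1+\cdots+\vecbf e_i)\in\RR^m$ to the vector whose first $u(i)$ coordinates equal $k$ and whose remaining coordinates are $0$, that is, to $k(\vecbf e_1+\cdots+\vecbf e_{u(i)})$. Applying $\phi$ then gives $v_{u(i)}$. Two affine maps that agree on the $m+1$ affinely independent vertices of $k\cdot\ordsimp{m}$ coincide, so $\phi\circ\iota_J=\psi$ and the claim follows.

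I expect the main obstacle to be bookkeeping in Proposition \ref{prop:fkt_face_restriction} itself. Its $\iota_J$ was written for the unscaled face, with a fixed block of leading $1$'s. For the scaled face $k\cdot\partial_J\ordsimp{d}$ that block becomes $k$'s, so one must use the scaled version $k\,\iota_J(\cdot/k)$. This is still the inverse of $\pi_J$ on the span of the scaled face, and it remains compatible with the FK-structure because translation by the integer vector $(k,\ldots,k,0,\ldots)$ preserves $\FK(\RR^d)$. A second point to check is the role of $u(0)$ when $u(0)>0$, i.e.\ when $\tau$ omits $v_0$: the leading constant block then carries the translation, and the integer-translation invariance of FK-simplices is exactly what is needed there.
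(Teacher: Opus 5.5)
Your proposal is correct and follows the paper's proof. Like the paper, you reduce to $(\ordsimp{d}, \partial_J\ordsimp{d})$ via the affine vertex map, invoke Proposition \ref{prop:fkt_face_restriction}, and check that the vertex orders correspond. Your added details are sound and fill in the step the paper dismisses with ``by linearity'' and ``trivially true'': the explicit vertex computation $k(\vecbf e_1+\cdots+\vecbf e_i)\mapsto k(\vecbf e_1+\cdots+\vecbf e_{u(i)})\mapsto v_{u(i)}$, and the observation that when $u(0)>0$ one must use $k\,\iota_J(\cdot/k)$, which is compatible with $\FK(\RR^d)$ by integer-translation invariance.
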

\begin{proof}
    Let $\phi: \sigma \rightarrow \ordsimp{d}$ to be the affine linear isomorphism sending $v_i \in \sigma$ to $\vecbf e_1 + \cdots + \vecbf e_i \in \ordsimp{d}$. If we let $J = \{u(0), \ldots u(m) \}$, then $\phi$ sends $(\sigma, \tau)$ to $(\ordsimp{d}, \partial_J \ordsimp{d})$. Thus without loss of generality, we may simply set $(\sigma, \tau) = (\ordsimp{d}, \partial_J \ordsimp{d})$.

    Let $\pi_J: \mathsf{span}(\partial_J \ordsimp{d}) \rightarrow \RR^m$ be the coordinate projection with respect to $J$. By Proposition \ref{prop:fkt_face_restriction}, the FK-triangulation of $\mathsf{span}(\partial_J \ordsimp{d}) \subset \RR^d$ is mapped to the FK-triangulation of $\RR^m$. Therefore it remains to prove that $\pi_J(k \cdot \partial_J \ordsimp{d}) = k \cdot \ordsimp{m}$. By linearity, it suffices to prove the $k=1$ case $\pi_J(\partial_J \ordsimp{d}) = \ordsimp{m}$. But this is trivially true, with the vertex orders correctly mapped between the two simplices. This proves the claim.
\end{proof}

Proposition \ref{prop:fkt_welldefined} implies that the FK-triangulation of a simplicial complex is well-defined after fixing a global vertex order.

\begin{defn}[FK-triangulation of a simplicial complex] \label{defn:fkt_spcpx}
    Let $K$ be a finite geometric simplicial complex, and let $v_\bullet = (v_1, \ldots v_n)$ be an ordering of its vertices. Then the $k$-th FK-triangulation $\FK_k(K, v_\bullet)$ is defined by gluing the triangulations $\FK_k(\sigma, v^\sigma_\bullet)$ for every simplex $\sigma \in K$. Here, $v^\sigma_\bullet = (v_{i(0)}, \ldots v_{i(d)})$ is the unique ordering of the vertices of $\sigma$ that satisfies $i(0)< \ldots <i(d)$.
\end{defn}

Lastly we note the following.

\begin{lem} \label{lem:fkt_isometry_bound}
    For every $\l > 0$, the number of simplices in the simplicial complex $\FK_\l (K, v_\bullet)$ up to Euclidean isometry is at most $\sum_{\sigma \in K} d_\sigma!$, where $d_\sigma = \dim(\sigma)$ and the sum is taken over maximal simplices $\sigma \in K$.
\end{lem}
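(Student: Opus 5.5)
The plan is to reduce everything to the model complex $\FK(\RR^d)$ and count translation classes there. Fix a maximal simplex $\sigma \in K$ of dimension $d_\sigma$ with its inherited vertex order. By Definition \ref{defn:fkt_spx}, $\FK_\l(\sigma, v^\sigma_\bullet)$ is the pushforward $\eta_*\big(\FK(\RR^{d_\sigma})|_{\l \cdot \ordsimp{d_\sigma}}\big)$, where $\eta$ is affine. Every simplex of $\FK_\l(K, v_\bullet)$ lies in some maximal simplex of $K$, and lower-dimensional simplices are faces of top ones. It therefore suffices to show two things for each maximal $\sigma$. First, the $d_\sigma$-simplices of $\FK_\l(\sigma)$ fall into at most $d_\sigma!$ isometry classes. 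Second, this count is not enlarged by the faces, in the sense that we count maximal simplices, as the statement's sum over maximal $\sigma$ suggests. Summing over $\sigma$ then gives the bound $\sum_\sigma d_\sigma!$.

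The key step is Proposition \ref{prop:fkt_ksimp}. It shows that the top-dimensional simplices of $\FK(\RR^d)|_{\l\cdot\ordsimp{d}}$ are exactly the sets $\vecbf n + \permelt\,\ordsimp{d}$, with $\vecbf n \in \ZZ^d$ and $\permelt \in \permgrp_d$. Two such simplices with the same $\permelt$ differ by the translation $\vecbf n - \vecbf n'$. Applying $\eta$, with linear part $L$, the images are $\eta(\vecbf n) + L\permelt\,\ordsimp{d}$. Any two of these with the same $\permelt$ differ by the translation $L(\vecbf n - \vecbf n')$, and a translation is a Euclidean isometry. So the isometry class of a top simplex is determined by $\permelt$ alone, which gives at most $|\permgrp_{d_\sigma}| = d_\sigma!$ classes per maximal simplex, independent of $\l$.

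Finally, I would handle two bookkeeping points. The first is that lower-dimensional simplices either are counted separately or the lemma is read as concerning $d$-simplices. This is how it is used in the proof of Proposition \ref{prop:B}(2), where $Q(\sigma_0)$ consists of $d$-simplices. If lower faces are to be included, the same translation argument applies to $\vecbf n + \permelt(\partial_I \ordsimp{d})$, but it gives a bound in terms of pairs $(\permelt, I)$. I would therefore state the count for maximal simplices. The second point is well-definedness across shared faces, which is supplied by Proposition \ref{prop:fkt_welldefined}; counting per maximal simplex may overcount but never undercounts.

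I expect the main obstacle to be this interpretive point about which simplices are counted, rather than any computation. The core argument, that top simplices are translates indexed by permutations and that affine maps send translates to translates, is immediate from Proposition \ref{prop:fkt_ksimp}.
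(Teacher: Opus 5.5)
Your proposal is correct and is essentially the paper's argument. Both reduce to one maximal simplex, observe that the top simplices of $\FK(\RR^d)|_{\l\cdot\ordsimp{d}}$ are translates $\vecbf n+\permelt\,\ordsimp{d}$ falling into at most $d!$ classes indexed by $\permelt$, and push these forward by the affine vertex-matching map. Your version is the more precise one: only equivalence up to translation, not the rotations and reflections mentioned in the paper's proof, survives an affine pushforward. Your restriction to top-dimensional simplices is also the reading under which the lemma is true (counting vertices and edges already breaks it for $d=1$), and it is how the lemma is used for $Q(\sigma_0)$ in Proposition \ref{prop:B}(2).
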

\begin{proof}
    It suffices to prove the claim for each maximal simplex. Firstly note that the FK-triangulation of the Euclidean space $\RR^d$ contains $d!$ different simplices up to reflection, rotation, and translation. Therefore an affine transformation of its subset contains at most $d!$ Euclidean isometry types of simplices. 
\end{proof}

\section{Necessity of vertex ordering} \label{appendix: fkt_vertex_ordering}

We consider the issue of orientation compatibility arising in FK triangulation. While this does not happen in 2-dimensional FK triangulation, it is an issue from dimension 3 onwards. Consider the 3-dimensional order simplex and its 2nd FK triangulation. The four vertices of the order 3-simplex are given by coordinates
\begin{align*}
    A^o_1 = (0,0,0),\> A^o_2 = (0,0,1), \> A^o_3 = (0,1,1), \> A^o_4 = (1,1,1)
\end{align*}
and each $B_{ij}$ is the midpoint of $(A_i, A_j)$, given explicitly as:
\begin{align*}
    B_{12} =& \bigg(0,0,\frac12\bigg), \> B_{13} = \bigg(0, \frac12, \frac12\bigg), \> B_{14} = \bigg(\frac12, \frac12, \frac12\bigg) \\
    B_{34} =& \bigg(\frac12, 1, 1\bigg), \> B_{24} = \bigg(\frac12, \frac12, 1\bigg), \> B_{23} = \bigg(0, \frac12, 1\bigg)
\end{align*}

\begin{lem} \label{lem:cox_bad1}
    The 2nd FK triangulation of the ordered 3-simplex $(A_1^o A_2^o A_3^o A_4^o)$ contains the segment $(B_{13}^o B_{24}^o)$ but neither the segment $(B_{12}^o B_{34}^o)$ nor $(B_{14}^o B_{23}^o)$. Here $B_{ij}^o$ is the midpoint of $(A_i^o, A_j^o)$.
\end{lem}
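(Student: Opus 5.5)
The plan is to rescale by $2$ so that every point involved becomes a lattice point, and then to read off the edges of $\FK(\RR^3)$ from a purely combinatorial criterion. After scaling, the ordered simplex $2\cdot(A_1^oA_2^oA_3^oA_4^o)$ has vertices $(0,0,0),(0,0,2),(0,2,2),(2,2,2)$. The midpoints become
\[ B_{12}=(0,0,1),\; B_{13}=(0,1,1),\; B_{14}=(1,1,1),\; B_{23}=(0,1,2),\; B_{24}=(1,1,2),\; B_{34}=(1,2,2). \]
The simplex here is $\{0\le t_1\le t_2\le t_3\le 1\}$, which is the order simplex $\ordsimp{3}$ after a coordinate reversal. $\FK(\RR^3)$ is invariant under coordinate permutations (the set of FK-simplices is closed under $\permgrp_3$) and under integer translations. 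So by Definition \ref{defn:fkt_spx} and Proposition \ref{prop:fkt_ksimp}, the 2nd FK triangulation of this ordered simplex is exactly the restriction of $\FK(\RR^3)$ to the scaled simplex. I will also confirm that the vertex matching in Definition \ref{defn:fkt_spx} agrees with this identification, i.e.\ that $A_i^o$ corresponds to the $i$-th vertex of $\ordsimp{3}$ under the reversal.

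The key step is an edge criterion: two distinct lattice points $p,q\in\ZZ^3$ span an edge of $\FK(\RR^3)$ iff $q-p$ or $p-q$ lies in $\{0,1\}^3$.

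For necessity, I use Proposition \ref{prop:fksimp_properties}(1). The vertices of $\vecbf n+\permelt(\ordsimp{3})$ are $\vecbf n+\sum_{j\le i}\vecbf e_{\permelt(j)}$, so any two of them differ by $\pm$ a nonzero $0/1$ vector. Every edge is a face of some FK $3$-simplex, so this applies to all edges.

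For sufficiency, suppose $q-p=\sum_{j\in S}\vecbf e_j$. I take $\vecbf n=p$ and a permutation listing $S$ first. Then $p$ and $q$ are both vertices of $\vecbf n+\permelt(\ordsimp{3})$, so the segment $pq$ is an edge of that simplex.

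Applying the criterion:
\begin{itemize}
\item $B_{24}-B_{13}=(1,0,1)$, a $0/1$ vector, so $B_{13}B_{24}$ is an edge of $\FK(\RR^3)$.
\item $B_{34}-B_{12}=(1,2,1)$ has an entry $2$, so $B_{12}B_{34}$ is not an edge.
\item $B_{23}-B_{14}=(-1,0,1)$ has mixed signs, so $B_{14}B_{23}$ is not an edge.
\end{itemize}
Since the triangulation of the scaled simplex is a subcomplex of $\FK(\RR^3)$, the two non-edges are not edges of it either.

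It remains to check that the edge $B_{13}B_{24}$ actually lies in the subcomplex restricted to the scaled simplex, not merely in $\FK(\RR^3)$. The segment lies in the scaled simplex by convexity. I will then exhibit an FK $3$-simplex containing it that sits inside the scaled simplex. Base it at $\vecbf n=B_{13}=(0,1,1)$ and order the directions $\vecbf e_3,\vecbf e_1,\vecbf e_2$. This gives the vertices $(0,1,1),(0,1,2),(1,1,2),(1,2,2)$, all of which satisfy $0\le t_1\le t_2\le t_3\le 2$, and its vertex set contains both $B_{13}$ and $B_{24}$.

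The main obstacle I expect is bookkeeping rather than mathematics. The coordinates in this appendix use increasing order, whereas $\ordsimp{d}$ uses decreasing order, and Definition \ref{defn:fkt_spx} fixes a specific vertex matching. I need to make sure the permutation and reflection used in the identification genuinely preserve $\FK(\RR^3)$ and respect the stated vertex order; otherwise the claimed edge could be the wrong diagonal.
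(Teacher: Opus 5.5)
Your proof is correct, and it takes a different route from the paper.

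\textbf{The paper's route.} The paper looks at the octahedron spanned by the six midpoints. It computes the three planes $\pi_{ij|kl}$, each passing through the four midpoints complementary to an antipodal pair: $y=\tfrac12$, $x-y+z=\tfrac12$ and $x-z=-\tfrac12$. Only the first and third belong to the arrangement $\{x_i=k/2\}\cup\{x_i-x_j=k/2\}$ that cuts out the rescaled FK triangulation. The octahedron is therefore split along the two planes containing $B_{13}^oB_{24}^o$, so that is the diagonal used.

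\textbf{Your route.} You read off a lattice adjacency rule from Proposition~\ref{prop:fksimp_properties}(1): $p,q\in\ZZ^3$ span an edge of $\FK(\RR^3)$ iff $q-p\in\pm\{0,1\}^3$. You then test the three differences $(1,0,1)$, $(1,2,1)$ and $(-1,0,1)$ directly.

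\textbf{What each buys.} Your argument is more elementary and works straight from the paper's definition of $\FK(\RR^3)$ as a set of simplices. It also makes explicit several steps the paper's proof leaves tacit:
\begin{itemize}
\item the hyperplane-arrangement description of the FK triangulation, which does follow from \eqref{eqn:fksimp_defn} but is never stated;
\item the step from ``which planes occur'' to ``which diagonal is an edge'';
\item that the vertex matching of Definition~\ref{defn:fkt_spx} for the order $(A_1^o,\dots,A_4^o)$ is $\tfrac12$ times the coordinate reversal $t_1\leftrightarrow t_3$, which preserves $\FK(\RR^3)$. This is exactly the vertex-ordering issue at stake here.
\end{itemize}
The paper's planar picture, for its part, is the form Lemma~\ref{lem:cox_bad2} actually uses when it speaks of a simplex being ``cut along the bisecting plane.''

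\textbf{A small robustness remark.} Your non-edge conclusions also hold under the weaker reading of ``contains'' as ``the segment lies in the $1$-skeleton.'' After rescaling, $B_{12}^oB_{34}^o$ and $B_{14}^oB_{23}^o$ contain no lattice points other than their endpoints. So neither segment can be a union of several edges either.
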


\begin{proof}
    The displacement vectors between the opposite points on the octahedron formed by the vertices $B_{ij}$ are given as follows:
    \begin{align*}
        B_{12} - B_{34} = \bigg( -\frac12, -1, -\frac12 \bigg), \>\> B_{13} - B_{24} = \bigg(-\frac12, 0, -\frac12 \bigg), \>\> B_{14} - B_{23} = \bigg( \frac12, 0, -\frac12 \bigg)
    \end{align*}
    Denote $B_{\bullet} = \{B_{ij}\}$ as the set of 6 vertices of the octahedron. Denote $\pi_{ij|kl}$ as the unique plane passing through the 4 points $B_{\bullet} \backslash \{ B_{ij}, B_{kl} \}$. Define the vectors $N_{ij|kl}$ as follows (using cross product):
    \begin{align*}
        N_{12|34} =& (B_{13} - B_{24}) \times (B_{14} - B_{23}) = \bigg( 0, -\frac12, 0 \bigg) \\
        N_{13|24} =& (B_{12} - B_{34}) \times (B_{14} - B_{23}) = \bigg( \frac12, -\frac12, \frac12 \bigg) \\
        N_{14|23} =& (B_{12} - B_{34}) \times (B_{13} - B_{24}) = \bigg( \frac12, 0, - \frac12 \bigg)
    \end{align*}
    Since the common midpoints the three pairs of antipodal vertices of the octahedron is $B_0 \eqdef (1/4, 1/2, 3/4)$, the equations for $\pi_{ij|kl}$ are given as follows:
    \begin{align*}
        \pi_{12|34} =& \bigg\{ X \>\bigg|\> N_{12|34} \cdot (X-B_0) = 0 \bigg\} = \bigg\{ (x,y,z) \>\bigg|\> y = \frac12 \bigg\} \\
        \pi_{13|24} =& \bigg\{ X \>\bigg|\> N_{13|24} \cdot (X-B_0) = 0 \bigg\} = \bigg\{ (x,y,z) \>\bigg|\> x -y +z = \frac12 \bigg\} \\
        \pi_{14|23} =& \bigg\{ X \>\bigg|\> N_{14|23} \cdot (X-B_0) = 0 \bigg\} = \bigg\{ (x,y,z) \>\bigg|\> x - z = -\frac12 \bigg\}
    \end{align*}
    Then it is easy to see that the planes $\pi_{12|34}$ and $\pi_{14|23}$ belong to the 2nd Coxeter subdivision, while $\pi_{13|24}$ does not. This concludes the proof.
\end{proof}

From the above, we get the following issue.
\begin{lem} \label{lem:cox_bad2}
    Let $A_1, A_2, \ldots A_6 \in \RR^5$ be points in a general position. Let $K_1, K_2$ be the 2nd (ordered) FK triangulation of the ordered simplices $\sigma_1, \sigma_2$, where $\sigma_i$ are given by $\sigma_1 =(A_1 A_2 A_3 A_4 A_5)$ and $\sigma_2 =(A_2 A_3 A_4 A_1 A_6)$. Then the simplicial complexes $K_1, K_2$ restrict to two different complexes in the 3-simplex $(A_1 A_2 A_3 A_4)$. In other words, $K_1, K_2$ cannot be glued at their common (3-dimensional) face to form a single simplicial complex.
\end{lem}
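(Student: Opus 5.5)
The plan is to reduce everything to Lemma \ref{lem:cox_bad1} by tracking how each of $\sigma_1$ and $\sigma_2$ induces an ordering on the common face $\tau = (A_1A_2A_3A_4)$. By Proposition \ref{prop:fkt_welldefined}, the restriction of $\FK_2(\sigma_i, v^{\sigma_i}_\bullet)$ to a face is again an FK-triangulation of that face, taken with the order inherited from $\sigma_i$. Since $\sigma_1 = (A_1A_2A_3A_4A_5)$, the inherited order on $\tau$ is $(A_1,A_2,A_3,A_4)$. Since $\sigma_2 = (A_2A_3A_4A_1A_6)$, the inherited order is $(A_2,A_3,A_4,A_1)$. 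So it suffices to show
\[
\FK_2(\tau,(A_1,A_2,A_3,A_4)) \neq \FK_2(\tau,(A_2,A_3,A_4,A_1))
\]
as subdivisions of the same geometric simplex $\tau$. General position of $A_1,\dots,A_6$ only guarantees that $\sigma_1,\sigma_2$ are nondegenerate $4$-simplices meeting exactly in $\tau$, so that gluing makes sense at all.

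Both subdivisions are pushforwards of $\FK(\RR^3)|_{2\ordsimp{3}}$ under affine maps matching vertices in the stated order. Each therefore splits $\tau$ into the four corner tetrahedra plus the central octahedron on the six edge midpoints $M_{ij}$ of $A_iA_j$, and the octahedron is cut along exactly one of its three diagonals. Lemma \ref{lem:cox_bad1} identifies this diagonal for the ordered simplex $(A^o_1A^o_2A^o_3A^o_4)$: it is $B^o_{13}B^o_{24}$, joining the midpoints of the first–third and second–fourth edges in the ordering. Transporting by the affine map that sends the $k$-th vertex of the order simplex to the $k$-th vertex in the given ordering, the two diagonals are as follows.
\begin{itemize}
\item With order $(A_1,A_2,A_3,A_4)$, the diagonal is $M_{13}M_{24}$.
\item With order $(A_2,A_3,A_4,A_1)$, positions $1,3$ are $A_2,A_4$ and positions $2,4$ are $A_3,A_1$, so the diagonal is $M_{24}M_{13}$.
\end{itemize}
These are the same segment. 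The cyclic shift $(A_2A_3A_4A_1)$ as I have read it therefore does not by itself separate the two triangulations. This is the main obstacle.

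My next step is to recheck Lemma \ref{lem:cox_bad1}'s coordinates. With the given $B$'s, the edge $B_{13}B_{24}$ has direction $(1/2,0,1/2)$ and lies in the plane $y=1/2$. It is plausible that the diagonal actually used by FK is $B_{12}B_{34}$, or $B_{14}B_{23}$, rather than $B_{13}B_{24}$. To settle this I would list the six FK $3$-simplices of $2\ordsimp{3}$ via Proposition \ref{prop:fkt_ksimp} (with $\vecbf n\in\{0,1\}^3$ nonincreasing) and read off the interior edge directly; this is a short enumeration.
\begin{itemize}
\item If the diagonal is $M_{14}M_{23}$ (first–last and middle pair), the cyclic shift sends it to $M_{21}M_{34}$, which is different. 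The claim then follows, because the two restrictions contain different interior edges of the octahedron. Since a simplicial complex on $\tau$ contains a unique such edge, the complexes cannot coincide and so cannot be glued.
\item If the diagonal is $M_{12}M_{34}$ (first–second and third–fourth), the shift sends it to $M_{23}M_{41}$, which is again different, and the same conclusion holds.
\end{itemize}
So the proof is complete once the correct diagonal is confirmed to be anything other than the ``alternating'' pair $\{13,24\}$, which is the only one invariant under the cyclic shift. Verifying this carefully is where I expect the real work to lie.
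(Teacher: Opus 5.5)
Your reduction is the same as the paper's: inherited face orders via Proposition \ref{prop:fkt_welldefined}, then transport of the octahedral diagonal from Lemma \ref{lem:cox_bad1}. Your computation that both inherited orders give the same diagonal $M_{13}M_{24}$ is correct. The gap is your proposed way out, because Lemma \ref{lem:cox_bad1} is not mistaken and the enumeration you plan will confirm it. In Proposition \ref{prop:fkt_ksimp} with $k=2$, the cubes $\vecbf n=(0,0,0)$ and $(1,1,1)$, together with the identity permutation for $\vecbf n=(1,0,0)$ and $(1,1,0)$, give the four corner tetrahedra, which do not depend on the vertex order. The other four tetrahedra, coming from $\vecbf n=(1,0,0)$ and $(1,1,0)$, all contain the lattice points $(1,1,0)$ and $(2,1,1)$. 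These are the midpoints of $v_0v_2$ and $v_1v_3$, and their difference $(1,0,1)$ lies in $\{0,1\}^3$, so they span an FK edge. The other two candidate diagonals have differences $(1,2,1)$ and $(1,0,-1)$ and are not FK edges. So for every order $(v_0,v_1,v_2,v_3)$ the level-$2$ subdivision of a $3$-simplex is determined by the alternating diagonal $M(v_0,v_2)M(v_1,v_3)$. The hypothesis on which you make your proof conditional is therefore false.

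Consequently no proof can exist, because the statement as written is false. The cyclic shift $(A_2,A_3,A_4,A_1)$ lies in the order-$8$ dihedral subgroup of $\permgrp_4$ that stabilizes the matching $\{\{1,3\},\{2,4\}\}$ of positions. Hence $K_1|_\tau=K_2|_\tau$, and $K_1,K_2$ glue perfectly well along $\tau$.

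The paper's proof breaks at exactly the point you flagged. It claims $\tau_2=(A_2A_3A_4A_1)$ is cut along the plane $\pi_{13|24}$. Translating the position labels of $\tau_2$ back to $A$-labels shows instead that it is cut along $\pi_{14|23}$ and $\pi_{12|34}$, just like $\tau_1$. Your instinct to distrust the example was right, but the error is in the choice of $\sigma_2$, not in Lemma \ref{lem:cox_bad1}.

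The lemma does hold if $\sigma_2$ induces an order on $\tau$ outside that stabilizer. For example, $\sigma_2=(A_2A_1A_3A_4A_6)$ gives the inherited diagonal $M_{14}M_{23}\neq M_{13}M_{24}$. In that case your argument (different interior octahedral edges, hence different complexes) proves it verbatim.
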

\begin{proof}
    We first recall that FK triangulation is compatible with its faces, in that a FK triangulation of a $d$-simplex $\sigma = (A_1 A_2 \ldots A_{d+1})$ restricts to the $d'$-face $\tau = (A_{i(1)}, \ldots A_{i(d'+1)})$ as the FK triangulation of $\tau$ (here $i(1) < \cdots < i(d'+1)$). 
    
    Let $\tau_1 = (A_1 A_2 A_3 A_4)$ and $\tau_2 = (A_2 A_3 A_4 A_1)$ be (distinct) ordered 3-simplices. Then due to the aforementioned face-compatibility, $\sigma_i$ restricts to $\tau_i$ for $i=1, 2$. However at this point we see from Lemma \ref{lem:cox_bad1} that $\tau_1$ is \emph{not} cut along the bisecting plane of $(B_{13} B_{24})$, whereas $\tau_2$ is cut along the bisecting plane of $(B_{13} B_{24})$. This concludes the proof.
\end{proof}

\section{Necessity of simplex quality}
\label{appendix: simplex_quality}

Here we use a concrete example to illustrate that the simplex quality needs to be controlled in Proposition \ref{prop:metric_distortion_rectification}.

\begin{prop}
    Let $\epsilon \in (0, 1/2)$ be a number. Let $\sigma_\epsilon \subset \RR^d$ be a simplex whose vertices are $(\vecbf u_0, \ldots \vecbf u_d)$, given as follows:
    \begin{align*}
        \vecbf u_0 = \vecbf 0_d \text{, and } \vecbf u_i = (\vecbf u_i', \epsilon) \text{, for } 1 \le i \le d
    \end{align*}
    where $(\vecbf u_1', \ldots \vecbf u_{d-1}')$ is a basis of $\RR^{d-1}$ and $\vecbf u_d' = -(\vecbf u_1' + \cdots + \vecbf u_{d-1}')$. Let $\phi: \ball_{\RR^d}(\vecbf 0, 1) \rightarrow \SS^d$ be an inverse-projection of a sphere, defined as follows:
    \begin{align*}
        \phi(\vecbf x) & = (\vecbf x, \sqrt{1 - \|\vecbf x\|^2})
    \end{align*}
    Suppose that the norms are bounded as $\|\vecbf u_i'\| < 1/2$ for all $1 \le i \le d$, so that $\phi$ is well-defined on the simplex $\sigma_\epsilon$. Then the following limit holds:
    \begin{align*}
        \lim_{\epsilon \rightarrow 0^+} \frac{\|\diff_{\vecbf 0}\phi_\triangle(\vecbf v) \|}{\|\diff_{\vecbf 0}\phi (\vecbf v) \|} = +\infty
    \end{align*}
    where $\vecbf v = \vecbf e_d$ is the $d$-th standard basis vector.
\end{prop}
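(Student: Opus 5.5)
Since $\phi_\triangle$ is affine on $\sigma_\epsilon$, its differential is the same at every point and is determined by the edge relations $\Diff\phi_\triangle(\vecbf u_i - \vecbf u_0) = \phi(\vecbf u_i) - \phi(\vecbf u_0)$ (as in the proof of Proposition \ref{prop:metric_distortion_rectification}). The plan is to compute $\diff_{\vecbf 0}\phi_\triangle(\vecbf e_d)$ explicitly by writing $\vecbf e_d$ as a combination of the edge vectors $\vecbf u_i = \vecbf u_i - \vecbf u_0$, and to compare it with $\diff_{\vecbf 0}\phi(\vecbf e_d)$. The latter is easy: the differential of $\vecbf x \mapsto (\vecbf x, \sqrt{1-\|\vecbf x\|^2})$ at $\vecbf 0$ is the inclusion $\vecbf v \mapsto (\vecbf v, 0)$, so $\|\diff_{\vecbf 0}\phi(\vecbf e_d)\| = 1$, independent of $\epsilon$. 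It therefore suffices to show that $\|\diff_{\vecbf 0}\phi_\triangle(\vecbf e_d)\| \to \infty$.

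First I will find the coefficients. Summing the vertices gives $\sum_{i=1}^d \vecbf u_i = (\sum_i \vecbf u_i', d\epsilon) = (\vecbf 0_{d-1}, d\epsilon)$, because $\vecbf u_d'$ was chosen to cancel the others. Hence
\begin{align*}
\vecbf e_d = \frac{1}{d\epsilon}\sum_{i=1}^d \vecbf u_i .
\end{align*}
This shows exactly how the degeneracy enters: the vertices $\vecbf u_1,\ldots,\vecbf u_d$ lie almost in the hyperplane $x_d = 0$, so expressing the vertical direction requires coefficients of order $1/\epsilon$. Applying linearity then gives
\begin{align*}
\diff_{\vecbf 0}\phi_\triangle(\vecbf e_d) = \frac{1}{d\epsilon}\sum_{i=1}^d \big(\phi(\vecbf u_i) - \phi(\vecbf 0)\big).
\end{align*}

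Second, I will evaluate this vector coordinatewise. The first $d$ coordinates are $\frac{1}{d\epsilon}\sum_i \vecbf u_i = \vecbf e_d$, which is bounded. The last coordinate is $\frac{1}{d\epsilon}\sum_i \big(\sqrt{1-\|\vecbf u_i\|^2} - 1\big)$. Each summand is at most $-\|\vecbf u_i\|^2/2$ in value, since $\sqrt{1-t} \le 1 - t/2$. Moreover $\|\vecbf u_i\|^2 \ge \|\vecbf u_i'\|^2$, and $\min_i\|\vecbf u_i'\| > 0$ holds independently of $\epsilon$, because $\vecbf u_1',\ldots,\vecbf u_{d-1}'$ form a basis and $\vecbf u_d' \neq \vecbf 0$. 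The absolute value of the last coordinate is therefore at least $c/\epsilon$ with $c = \frac{1}{2d}\sum_i\|\vecbf u_i'\|^2 > 0$, which tends to $+\infty$ as $\epsilon \to 0^+$. Together with $\|\diff_{\vecbf 0}\phi(\vecbf e_d)\| = 1$, this gives the claimed limit.

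The only delicate point I expect is keeping track of the conventions: the triangulated ambient space is $\RR^d$ here, so $\phi_\triangle$ is affine on $\sigma_\epsilon$ and its differential is well defined at $\vecbf 0$. It also needs a check that the hypotheses $\epsilon < 1/2$ and $\|\vecbf u_i'\| < 1/2$ give $\|\vecbf u_i\| < 1$, so that $\phi$ is defined on $\sigma_\epsilon$. The rest is a direct computation.
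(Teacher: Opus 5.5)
Your proof is correct and follows the same route as the paper's. You write $\vecbf e_d = \frac{1}{d\epsilon}\sum_{i=1}^d \vecbf u_i$ (the paper's $\vecbf a = \vecbf 0$, $b = 1/(d\epsilon)$) and push it through the constant differential of the secant map. You then show that the last coordinate is of order $1/\epsilon$ while $\|\diff_{\vecbf 0}\phi(\vecbf e_d)\| = 1$.

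Your version is actually the more careful one. The paper treats $\phi_\triangle$ as linear (``its derivative (which is itself)'') and sums $\phi(\vecbf u_i)$ instead of $\phi(\vecbf u_i) - \phi(\vecbf u_0)$. As a result, its last coordinate $\|\vecbf p\|_1/(d\epsilon)$ is too large by exactly $1/\epsilon$. The same computation would wrongly predict a blow-up for the flat map $\vecbf x \mapsto (\vecbf x, 1)$, whose secant map is the map itself.

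Your correct value $\frac{1}{d\epsilon}\sum_i\big(\sqrt{1-\|\vecbf u_i\|^2}-1\big)$ shows the blow-up comes from curvature, via $1-\sqrt{1-t}\ge t/2$ and $\sum_i\|\vecbf u_i'\|^2>0$.

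The only hypothesis worth stating explicitly is $d\ge 2$, which is what guarantees $\sum_i\|\vecbf u_i'\|^2>0$. For $d=1$ one has $\vecbf u_1' = \vecbf 0$ and the last coordinate is $(\sqrt{1-\epsilon^2}-1)/\epsilon \to 0$, so the ratio tends to $1$ and the statement fails.
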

\begin{proof}
Firstly $(\vecbf u_1, \ldots \vecbf u_d)$ is a basis of $\RR^d$. Explicitly, any $\vecbf x = (\vecbf x', x_d) \in \RR^d$ can be written as:
\begin{align*}
    \vecbf x =& (\vecbf U' (\vecbf U')^{-1} \vecbf x', x_d) \\
    =& (a_1 \vecbf u_1' + \cdots + a_{d-1} \vecbf u_{d-1}', x_d) \text{, where } \vecbf a = (a_1, \ldots a_{d-1}) = (\vecbf U')^{-1} \vecbf x' \\
    =& a_1 \vecbf u_1 + \cdots + a_{d-1} \vecbf u_{d-1} + (x_d - \epsilon \langle \vecbf a, \vecbf 1_{d-1} \rangle ) \vecbf e_d \\
    =& (a_1 + b) \vecbf u_1 + \cdots + (a_{d-1} + b) \vecbf u_{d-1} + b \vecbf u_d \text{, where } b = \frac{x_d - \epsilon \langle \vecbf a, \vecbf 1_{d-1} \rangle}{d \cdot \epsilon}
\end{align*}
We note that $\sum_{i=1}^d \vecbf u_i = (\sum_{i=1}^d \vecbf u_i' , d \cdot \epsilon) = d \cdot \epsilon \cdot \vecbf e_d$.

The differential of $\phi$ is given by:
\begin{align*}
    \Diff_x \phi (\vecbf v) = \bigg( \vecbf v, \frac{- \langle \vecbf v, \vecbf x \rangle}{\sqrt{1 - \|\vecbf x \|^2}} \bigg)
\end{align*}
In particular, the differential at the origin is simply $\diff_{\vecbf 0} \phi(\vecbf v) = (\vecbf v, 0)$.

Define the following vectors:
\begin{align*}
    \vecbf p =& (\sqrt{1 - \|\vecbf u_1\|^2}, \ldots \sqrt{1 - \| \vecbf u_d \|^2}) \\
    \vecbf p' =& (\sqrt{1 - \|\vecbf u_1\|^2}, \ldots \sqrt{1 - \| \vecbf u_{d-1} \|^2})
\end{align*}
The secant map and its derivative (which is itself) are given as follows. Given a (tangent) vector $\vecbf v  = (\vecbf v', v_d) \in \RR^d$ where $\vecbf v' \in \RR^{d-1}$, we have:
\begin{align*}
    \phi_\triangle'(\vecbf v) =& \phi_\triangle(\vecbf v', v_d) \\
    =& \phi_\triangle \big( (a_1 + b) \vecbf u_1 + \cdots + (a_{d-1} + b) \vecbf u_{d-1} + b \vecbf u_d \big) \\
    =& (a_1 + b) \phi (\vecbf u_1) + \cdots + (a_{d-1} + b) \phi (\vecbf u_{d-1}) + b \phi(\vecbf u_d) \\
    =& (\vecbf v, c )
\end{align*}
where
\begin{align*}
    \vecbf a =& (a_1, \ldots a_{d-1}) = (\vecbf U')^{-1} \vecbf v', \quad
    b = \frac{v_d - \epsilon \langle \vecbf a, \vecbf 1 _{d-1} \rangle }{d \cdot \epsilon}, \quad
    c = \langle \vecbf a, \vecbf p' \rangle + b \cdot \|\vecbf p \|_1
\end{align*}
This implies that:
\begin{align*}
    \frac{\| \diff_{\vecbf 0} \phi_\triangle(\vecbf v) \|^2 }{\| \diff_{\vecbf 0} \phi(\vecbf v) \|^2 } = 1 + \frac{c^2}{\|\vecbf v \|^2}
\end{align*}

In particular, when we set $\vecbf v = \vecbf e_d$, we have that:
\begin{align*}
    \vecbf a =& (\matbf U')^{-1} \vecbf 0_{d-1} = \vecbf 0_{d-1} \\
    b =& \frac{1 - \epsilon \cdot 0}{d \cdot \epsilon} = \frac1{d \cdot \epsilon} \\
    c =& 0 + b \cdot \|\vecbf p\|_1 = \frac{\| \vecbf p\|_1}{d \cdot \epsilon}
\end{align*}
Here, since $\sqrt{s^2-t^2} \ge s-t$ whenever $t \in [0, s]$, the following lower bound holds assuming that $\epsilon \le \min_i \sqrt{1 - \|\vecbf u_i'\|^2}$:
\begin{align*}
    \|\vecbf p\|_1 = \sum_{i=1}^d \sqrt{1 - \|\vecbf u_i\|^2} = \sum_{i=1}^d \sqrt{1 - \|\vecbf u_i' \|^2 - \epsilon^2} \ge \bigg(\sum_{i=1}^d \sqrt{1 - \|\vecbf u_i' \|^2}\bigg) - d \cdot \epsilon
\end{align*}

Therefore,
\begin{align*}
    \frac{\| \diff_{\vecbf 0} \phi_\triangle(\vecbf v) \|^2 }{\| \diff_{\vecbf 0} \phi(\vecbf v) \|^2 } = 1 + \frac{c^2}{\|\vecbf v \|^2} = 1 + \frac{\sum_{i=1}^d \sqrt{1 - \|\vecbf u_i' \|^2 - \epsilon^2}}{d \cdot \epsilon} \ge \frac{\sum_{i=1}^d \sqrt{1 - \|\mathbf u_i'\|^2}}{d \cdot \epsilon}
\end{align*}
which diverges as $\epsilon \rightarrow 0^+$. Note that $\|\vecbf u_i'\|$ are fixed regardless of $\epsilon$.
\end{proof}

\end{document}